   \numberwithin{equation}{section}
\newtheorem{thm}{Theorem}[section]
\newtheorem{lem}[thm]{Lemma}
\newtheorem{prop}[thm]{Proposition}
\newtheorem{defn}[thm]{Definition}
\begin{document}
\begin{frontmatter}
\author{Tong Wu$^1$}
\ead{wut977@nenu.edu.cn}
\author{Yong Wang$^2$\corref{cor2}}
\ead{wangy581@nenu.edu.cn}
\cortext[cor2]{Corresponding author.}

\address{1.School of Mathematics and Statistics, Northeast Normal University,
Changchun, 130024, China}
\address{2.School of Mathematics and Statistics, Northeast Normal University,
Changchun, 130024, China}

\title{ Gauss-Bonnet theorems associated to deformed Schouten-Van Kampen connection in the affine group and the group of rigid motions of the Minkowski plane }
\begin{abstract}
In this paper, we define deformed Schouten-Van Kampen connections which are metric connections and compute sub-Riemannian limits of Gaussian curvature for a Euclidean $C^2$-smooth surface associated to deformed Schouten-Van Kampen connections with two kinds of distributions in the affine group and the group of rigid motions of the Minkowski plane away from
 characteristic points and signed geodesic curvature
 for Euclidean $C^2$-smooth curves on surfaces. According to above results, we get Gauss-Bonnet theorems associated to two kinds of deformed Schouten-Van Kampen connections in the affine group and the group of rigid motions of the Minkowski plane.
\end{abstract}
\begin{keyword} Deformed Schouten-Van Kampen connection; Affine group; The group of rigid motions of the Minkowski plane; Distributions; Gauss-Bonnet theorems.\\

\end{keyword}
\end{frontmatter}
\section{Introduction}
\indent In \cite{DV}, Diniz and Veloso gave the definition of Gaussian curvature for non-horizontal surfaces in sub-Riemannian Heisenberg space $\mathbb{H}^1$ and the proof of the Gauss-Bonnet theorem. In \cite{BTV}, intrinsic
Gaussian curvature for a Euclidean $C^2$-smooth surface in the Heisenberg group $\mathbb{H}^1$ away
from characteristic points and intrinsic signed geodesic curvature for Euclidean
$C^2$-smooth curves on surfaces are defined by using a Riemannian approximation scheme. These results were then used to prove a Heisenberg version of
the Gauss-Bonnet theorem. In \cite{Ve}, Veloso verified that Gausssian curvature of surfaces and normal curvature of curves in surfaces introduced by \cite{DV} and by \cite{BTV} to prove Gauss-Bonnet theorems
in Heisenberg space $\mathbb{H}^1$ were unequal and he applied the same formalism of \cite{DV} to
get the curvatures of \cite{BTV}. With the obtained formulas, the Gauss-Bonnet theorem can be proved as a straightforward application of Stokes theorem in \cite{Ve}.\\
\indent In \cite{BTV} and \cite{BTV1}, Balogh-Tyson-Vecchi used the Riemannian approximation scheme which
can depend upon the choice of the complement to the horizontal distribution in general. In \cite{BTV}, they proposed an interesting question to understand to what extent similar
phenomena hold in other sub-Riemannian geometric structures. In \cite{YS}, Wang and Wei gave sub-Riemannian limits of Gaussian curvature for a Euclidean $C^2$-smooth surface in the affine group and the group of rigid motions of the Minkowski plane away from characteristic points and signed geodesic curvature for Euclidean $C^2$-smooth curves on surfaces. And they got Gauss-Bonnet theorems in the affine group and the group of rigid motions of the Minkowski plane. In \cite{YS1}, Wang and Wei gave sub-Riemannian limits of Gaussian curvature for a Euclidean $C^2$-smooth surface in the BCV spaces and the twisted Heisenberg group away from characteristic points and signed geodesic curvature for Euclidean $C^2$-smooth curves on surfaces. And they got Gauss-Bonnet theorems in the BCV spaces and the twisted Heisenberg group. In \cite{BK}, Klatt proved a Gauss-Bonnet theorem associated to a metric connection (see Proposition 5.2 in \cite{BK}). In \cite{Ba} and \cite{zo}, Schouten-Van Kampen connections on foliations and almost (para) contact manifolds were studied.\\
\indent In this paper, we define deformed Schouten-Van Kampen connections which are metric connections and compute sub-Riemannian limits of Gaussian curvature for a Euclidean $C^2$-smooth surface associated to deformed Schouten-Van Kampen connection with two kinds of distributions in the affine group and the group of rigid motions of the Minkowski plane away from
 characteristic points and signed geodesic curvature
 for Euclidean $C^2$-smooth curves on surfaces. According to above results, we get Gauss-Bonnet theorems associated to two kinds of deformed Schouten-Van Kampen connections in the affine group and the group of rigid motions of the Minkowski plane.

\indent In Section 2, we prove Gauss-Bonnet theorems associated to the first kind of deformed Schouten-Van Kampen connection in the affine group.  In Section 3, we prove Gauss-Bonnet theorems associated to the second kind of deformed Schouten-Van Kampen connection in the affine group.
In Section 4, we prove Gauss-Bonnet theorems associated to the first kind of deformed Schouten-Van Kampen connection in the group of rigid motions of the Minkowski plane. In Section 5, we prove Gauss-Bonnet theorems associated to the second kind of deformed Schouten-Van Kampen connection in the group of rigid motions of the Minkowski plane.
\section{Gauss-Bonnet theorems associated to the first kind of deformed Schouten-Van Kampen connection in the affine group}
\indent Firstly we recall the affine group. Let $\mathbb{M}$ be the affine group which satisfies $$(m,n,s)\star(\lambda,\mu,\nu)=(m\lambda,m\mu+n,\nu+s),$$
where $(1,0,0)$ is the unit element of $\mathbb{M}$.\\
Let
\begin{equation}
X_1=x_1\partial_{x_1}, ~~X_2=x_1\partial_{x_2}+\partial_{x_3},~~X_3=x_1\partial_{x_2},
\end{equation}
with brackets
\begin{equation}
[X_1,X_2]=X_3,~~[X_1,X_3]=X_2,~~[X_2,X_3]=0.
\end{equation}
Then
\begin{equation}
\partial_{x_1}=\frac{1}{x_1}X_1, ~~\partial_{x_2}=\frac{1}{x_1}X_3,~~\partial_{x_3}=X_2-X_3,
\end{equation}
and ${\rm span}\{X_1,X_2,X_3\}=T\mathbb{M}.$
Let $\omega_1=\frac{1}{x_1}dx_1,~~\omega_2=dx_3,~~\omega=\frac{1}{x_1}dx_2-dx_3.$ For the constant $L>0$, let
$g_L=\omega_1\otimes \omega_1+\omega_2\otimes \omega_2+L\omega\otimes \omega$ be the Riemannian metric on $\mathbb{M}$. Then $X_1,X_2,\widetilde{X_3}:=L^{-\frac{1}{2}}X_3$ are orthonormal basis on $T\mathbb{M}$ with respect to $g_L$.\\
\indent Let $H_1={\rm span}\{X_1,X_2\}$ be the first kind of horizontal distribution on $\mathbb{M}$, then $H_1^\bot={\rm span}\{X_3\}$. Let $\nabla$ be the Levi-Civita connection on $\mathbb{M}$ with respect to $g_L$, and we recall the Schouten-Van Kampen connection $\nabla^{1,\alpha,s}$ by the following formulas
\begin{equation}
\nabla^{1,\alpha,s}_XY=P^1\nabla_X{P^1Y}+P^{1,\bot}\nabla_X{P^{1,\bot} Y},
\end{equation}
where $P^1$(resp.$P^{1,\bot}$) be the projection on $H_1$ (resp.$H_1^\bot$).\\
Nextly, we define the first kind of deformed Schouten-Van Kampen connection which is a metric connection in the affine group:
\begin{align}
\nabla^{1,\alpha}_XY&=(1-\alpha)\nabla_XY+\alpha\nabla^{1,\alpha,s}_XY\nonumber\\
&=(1-\alpha)\nabla_XY+\alpha P^1\nabla_X{P^1Y}+\alpha P^{1,\bot}\nabla_X{P^{1,\bot} Y},\nonumber\\
\end{align}
where $\alpha$ is a constant.\\
 By lemma 2.1 in \cite{YS} and (2.5), we have the following lemma
\begin{lem}
Let $\mathbb{M}$ be the affine group, then
\begin{align}
&\nabla^{1,\alpha}_{X_1}X_1=0,~~~\nabla^{1,\alpha}_{X_1}X_2=\frac{1-\alpha}{2}X_3,~~~ \nabla^{1,\alpha}_{X_1}X_3=-\frac{(1-\alpha)L}{2}X_2,\nonumber\\
&\nabla^{1,\alpha}_{X_2}X_1=-\frac{1-\alpha}{2}X_3,~~~\nabla^{1,\alpha}_{X_2}X_2=0,~~~\nabla^{1,\alpha}_{X_2}X_3=\frac{(1-\alpha)L}{2}X_1,\nonumber\\
&\nabla^{1,\alpha}_{X_3}X_1=-\frac{L}{2}X_2-(1-\alpha)X_3,~~~\nabla^{1,\alpha}_{X_3}X_2=\frac{L}{2}X_1,~~\nabla^{1,\alpha}_{X_3}X_3=(1-\alpha)LX_1.\nonumber\\
\end{align}
\end{lem}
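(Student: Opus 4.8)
The plan is to compute all nine connection coefficients $\nabla^{1,\alpha}_{X_i}X_j$ directly from the defining formula (2.5), using the Levi-Civita connection $\nabla$ on $\mathbb{M}$ supplied by Lemma 2.1 of \cite{YS} together with the explicit form of the two projections. Since the metric $g_L$ is diagonal in the basis $\{X_1,X_2,X_3\}$, with $X_3$ orthogonal to both $X_1$ and $X_2$, the orthogonal splitting $T\mathbb{M}=H_1\oplus H_1^\bot$ is just the coordinate splitting ${\rm span}\{X_1,X_2\}\oplus{\rm span}\{X_3\}$. Hence $P^1$ deletes the $X_3$-component of a vector and $P^{1,\bot}$ retains only the $X_3$-component. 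I would record these two rules first, because every subsequent computation reduces to reading off components against this frame.

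With the projections in hand, I would evaluate (2.5) entry by entry. For a fixed pair $(i,j)$ I first write down $\nabla_{X_i}X_j$ from Lemma 2.1 of \cite{YS}, then form the two corrected terms $P^1\nabla_{X_i}(P^1X_j)$ and $P^{1,\bot}\nabla_{X_i}(P^{1,\bot}X_j)$, and finally assemble the combination $(1-\alpha)\nabla_{X_i}X_j+\alpha P^1\nabla_{X_i}(P^1X_j)+\alpha P^{1,\bot}\nabla_{X_i}(P^{1,\bot}X_j)$. In the majority of cases one of the inner projections annihilates its argument --- for instance $P^{1,\bot}X_1=P^{1,\bot}X_2=0$ and $P^1X_3=0$ --- so that only the Levi-Civita term $(1-\alpha)\nabla_{X_i}X_j$ survives, and the stated formula is immediate.

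The only genuinely substantive cases are $\nabla^{1,\alpha}_{X_3}X_1$ and $\nabla^{1,\alpha}_{X_3}X_2$, where the Schouten-Van Kampen correction is nonzero. For these I must check that the $\alpha$-dependent contribution from $\alpha P^1\nabla_{X_3}(P^1X_j)$ combines exactly with the horizontal part of $(1-\alpha)\nabla_{X_3}X_j$. Concretely, for $\nabla^{1,\alpha}_{X_3}X_1$ one has $\nabla_{X_3}X_1=-\tfrac{L}{2}X_2-X_3$; the $X_2$-coefficient then receives weight $(1-\alpha)$ from the Levi-Civita term and weight $\alpha$ from the projected term, so the two reassemble into the $\alpha$-independent coefficient $-\tfrac{L}{2}$, while the vertical $X_3$-coefficient retains the factor $(1-\alpha)$. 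I expect this bookkeeping --- verifying that the horizontal coefficients come out $\alpha$-independent whereas the vertical coefficients carry the factor $(1-\alpha)$ --- to be the main, and essentially the only, point requiring care; the remaining seven identities then follow by inspection.
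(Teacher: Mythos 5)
Your proposal is correct and takes essentially the same route as the paper, whose entire proof is the one-line remark that the lemma follows from Lemma 2.1 in \cite{YS} and the defining formula (2.5): you simply carry out that substitution explicitly, correctly observing that $g_L$ being diagonal makes $P^1$, $P^{1,\bot}$ the coordinate projections onto ${\rm span}\{X_1,X_2\}$ and ${\rm span}\{X_3\}$. Your case analysis is also accurate --- the $\alpha$-corrections vanish in seven of the nine entries (leaving $(1-\alpha)\nabla_{X_i}X_j$), and only for $\nabla^{1,\alpha}_{X_3}X_1$ and $\nabla^{1,\alpha}_{X_3}X_2$ do the weights $(1-\alpha)$ and $\alpha$ on the horizontal part recombine into the $\alpha$-independent coefficients $-\frac{L}{2}X_2$ and $\frac{L}{2}X_1$, exactly as the stated formulas require.
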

\begin{defn}(\cite{YS})
Let $\gamma:[a,b]\rightarrow (\mathbb{M},g_L)$ be a Euclidean $C^1$-smooth curve. We say that $\gamma$ is regular if $\dot{\gamma}\neq 0$ for every $t\in [a,b].$ Moreover we say that
$\gamma(t)$ is a horizontal point of $\gamma$ if
$$\omega(\dot{\gamma}(t))=\frac{\dot{\gamma}_2(t)}{\gamma_1(t)}-\dot{\gamma}_3(t)=0.$$
\end{defn}
Similar to Definition 2.3 in (\cite{YS}), we have
\begin{defn}
Let $\gamma:[a,b]\rightarrow (\mathbb{M},g_L)$ be a Euclidean $C^2$-smooth regular curve in the Riemannian manifold $(\mathbb{M},g_L)$. The curvature $k^{L,\nabla^{1,\alpha}}_{\gamma}$ of $\gamma$ at $\gamma(t)$ is defined as
\begin{equation}
k^{L,\nabla^{1,\alpha}}_{\gamma}:=\sqrt{\frac{||\nabla^{1,\alpha}_{\dot{\gamma}}{\dot{\gamma}}||_L^2}{||\dot{\gamma}||^4_L}-\frac{\langle \nabla^{1,\alpha}_{\dot{\gamma}}{\dot{\gamma}},\dot{\gamma}\rangle^2_L}{||\dot{\gamma}||^6_L}}.
\end{equation}
\end{defn}
Then, we have
\begin{lem}
Let $\gamma:[a,b]\rightarrow (\mathbb{M},g_L)$ be a Euclidean $C^2$-smooth regular curve in the Riemannian manifold $(\mathbb{M},g_L)$. Then,
\begin{align}
k^{L,\nabla^{1,\alpha}}_{\gamma}&=\Bigg\{\Bigg\{\left[\frac{\ddot{\gamma}_1\gamma_1-(\dot{\gamma}_1)^2}{\gamma_1^2}+L\omega(\dot{\gamma}(t))\left(\frac{(1-\alpha)\dot{\gamma_2}(t)}{\gamma_1}+\frac{\alpha\dot{\gamma_3}(t)}{2}\right)\right]^2+\left[\ddot{\gamma}_3-\frac{(2-\alpha)\dot{\gamma}_1L}{2\gamma_1}\omega(\dot{\gamma}(t)\right]^2\nonumber\\
&+L\left[\frac{d}{dt}\omega(\dot{\gamma}(t))-\frac{(1-\alpha)\dot{\gamma}_1}{\gamma_1}\omega(\dot{\gamma}(t))\right]^2\Bigg\}
\cdot\left[\left(\frac{\dot{\gamma}_1}{\gamma_1}\right)^2+\dot{\gamma}_3^2+L(\omega(\dot{\gamma}(t)))^2\right]^{-2}\nonumber\\
&-\Bigg\{\frac{\dot{\gamma}_1}{\gamma_1}\left[\frac{\dot{\gamma}_1\ddot{\gamma}_1-(\dot{\gamma}_1)^2}{\gamma_1^2}+L\omega(\dot{\gamma}(t))\left(\frac{(1-\alpha)\dot{\gamma_2}(t)}{\gamma_1}+\frac{\alpha\dot{\gamma}_3(t)}{2}\right)\right]+\dot{\gamma}_3(t)\left[\ddot{\gamma}_3-\frac{(2-\alpha)\dot{\gamma}_1L}{2\gamma_1}\omega(\dot{\gamma}(t)\right]\nonumber\\
&+L\omega(\dot{\gamma}(t))\left[\frac{d}{dt}\omega(\dot{\gamma}(t))-\frac{(1-\alpha)\dot{\gamma}_1}{\gamma_1}\omega(\dot{\gamma}(t))\right]\Bigg\}^2\cdot\left[\left(\frac{\dot{\gamma}_1}{\gamma_1}\right)^2+\dot{\gamma}_3^2+L(\omega(\dot{\gamma}(t)))^2\right]^{-3}\Bigg\}^{\frac{1}{2}}.\nonumber\\
\end{align}
When $\omega(\dot{\gamma}(t))=0$, we have\\
\begin{align}
k^{L,\nabla^{1,\alpha}}_{\gamma}&=\Bigg\{\Bigg\{\left[\frac{\ddot{\gamma}_1\gamma_1-(\dot{\gamma}_1)^2}{\gamma_1^2}\right]^2+\ddot{\gamma}_3^2+L\left[\frac{d}{dt}\omega(\dot{\gamma}(t))\right]^2\Bigg\}
\cdot\left[\left(\frac{\dot{\gamma}_1}{\gamma_1}\right)^2+\dot{\gamma}_3^2\right]^{-2}\nonumber\\
&-\Bigg\{\frac{\dot{\gamma}_1}{\gamma_1}\left[\frac{\dot{\gamma}_1\ddot{\gamma}_1-(\dot{\gamma}_1)^2}{\gamma_1^2}+\dot{\gamma}_3\ddot{\gamma}_3\right]\Bigg\}^2\cdot\left[\left(\frac{\dot{\gamma}_1}{\gamma_1}\right)^2+\dot{\gamma}_3^2\right]^{-3}\Bigg\}^{\frac{1}{2}}.\nonumber\\
\end{align}
\end{lem}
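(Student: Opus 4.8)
The plan is to reduce everything to the $g_L$-orthonormal frame and then substitute directly into Definition 2.4. First I would rewrite the velocity $\dot{\gamma}$ in the left-invariant frame. Using the inversion formulas (2.3), namely $\partial_{x_1}=\frac{1}{x_1}X_1$, $\partial_{x_2}=\frac{1}{x_1}X_3$ and $\partial_{x_3}=X_2-X_3$, together with the definition $\omega(\dot{\gamma})=\frac{\dot{\gamma}_2}{\gamma_1}-\dot{\gamma}_3$, a short computation gives
\[
\dot{\gamma}=\frac{\dot{\gamma}_1}{\gamma_1}X_1+\dot{\gamma}_3 X_2+\omega(\dot{\gamma})X_3 .
\]
Since $X_1,X_2,\widetilde{X_3}=L^{-1/2}X_3$ are $g_L$-orthonormal, in particular $\|X_3\|_L^2=L$, this immediately yields $\|\dot{\gamma}\|_L^2=(\dot{\gamma}_1/\gamma_1)^2+\dot{\gamma}_3^2+L\,\omega(\dot{\gamma})^2$, which is the bracket appearing as the denominator factor.

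Next I would compute the covariant acceleration. Writing $\dot{\gamma}=aX_1+bX_2+cX_3$ with $a=\dot{\gamma}_1/\gamma_1$, $b=\dot{\gamma}_3$, $c=\omega(\dot{\gamma})$, the Leibniz rule for the metric connection $\nabla^{1,\alpha}$ gives
\[
\nabla^{1,\alpha}_{\dot{\gamma}}\dot{\gamma}=\dot{a}X_1+\dot{b}X_2+\dot{c}X_3+a\,\nabla^{1,\alpha}_{\dot{\gamma}}X_1+b\,\nabla^{1,\alpha}_{\dot{\gamma}}X_2+c\,\nabla^{1,\alpha}_{\dot{\gamma}}X_3 .
\]
Each $\nabla^{1,\alpha}_{\dot{\gamma}}X_i$ is expanded again by linearity into the nine structure coefficients supplied by Lemma 2.1, and the result is collected along $X_1,X_2,X_3$. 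The outcome is $\nabla^{1,\alpha}_{\dot{\gamma}}\dot{\gamma}=AX_1+BX_2+CX_3$, where $A,B,C$ are exactly the three bracketed quantities in the statement; here one uses $\dot{a}=(\ddot{\gamma}_1\gamma_1-\dot{\gamma}_1^2)/\gamma_1^2$, $\dot{b}=\ddot{\gamma}_3$, $\dot{c}=\frac{d}{dt}\omega(\dot{\gamma})$, and rewrites $\dot{\gamma}_2/\gamma_1=\omega(\dot{\gamma})+\dot{\gamma}_3$ to recognize the coefficients.

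With the components in hand, orthonormality gives $\|\nabla^{1,\alpha}_{\dot{\gamma}}\dot{\gamma}\|_L^2=A^2+B^2+LC^2$ and $\langle\nabla^{1,\alpha}_{\dot{\gamma}}\dot{\gamma},\dot{\gamma}\rangle_L=Aa+Bb+LCc$, the factors $L$ again coming from $\langle X_3,X_3\rangle_L=L$. Substituting these and $\|\dot{\gamma}\|_L^2$ into
\[
k^{L,\nabla^{1,\alpha}}_{\gamma}=\sqrt{\frac{\|\nabla^{1,\alpha}_{\dot{\gamma}}\dot{\gamma}\|_L^2}{\|\dot{\gamma}\|_L^4}-\frac{\langle\nabla^{1,\alpha}_{\dot{\gamma}}\dot{\gamma},\dot{\gamma}\rangle_L^2}{\|\dot{\gamma}\|_L^6}}
\]
reproduces the first displayed formula verbatim. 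The degenerate formula then follows by setting $\omega(\dot{\gamma})=0$, which kills $c$ but not $\dot{c}=\frac{d}{dt}\omega(\dot{\gamma})$, and simplifying.

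I expect the only real obstacle to be the bookkeeping in the second step: expanding $\nabla^{1,\alpha}_{\dot{\gamma}}X_i$ into nine terms and tracking the $\alpha$-dependent coefficients without error. The one genuinely non-mechanical point is the cancellation in the $X_3$-component, where the contribution $-\frac{(1-\alpha)}{2}ab$ coming from $a\,\nabla^{1,\alpha}_{\dot{\gamma}}X_1$ cancels the contribution $+\frac{(1-\alpha)}{2}ab$ coming from $b\,\nabla^{1,\alpha}_{\dot{\gamma}}X_2$, leaving the clean coefficient $C=\frac{d}{dt}\omega(\dot{\gamma})-\frac{(1-\alpha)\dot{\gamma}_1}{\gamma_1}\omega(\dot{\gamma})$; confirming that the $X_1$-component reorganizes into the stated form via $\dot{\gamma}_2/\gamma_1=\omega(\dot{\gamma})+\dot{\gamma}_3$ is the other place where care is needed.
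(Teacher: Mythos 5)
Your proposal is correct and is essentially identical to the paper's own proof: the paper likewise decomposes $\dot{\gamma}=\frac{\dot{\gamma}_1}{\gamma_1}X_1+\dot{\gamma}_3X_2+\omega(\dot{\gamma}(t))X_3$ via (2.3), computes $\nabla^{1,\alpha}_{\dot{\gamma}}X_i$ from Lemma 2.1, assembles $\nabla^{1,\alpha}_{\dot{\gamma}}\dot{\gamma}$ componentwise (its (2.10)--(2.12)), and substitutes into the curvature formula (2.7). The two points you flag as needing care --- the cancellation of the $\pm\frac{(1-\alpha)}{2}\frac{\dot{\gamma}_1}{\gamma_1}\dot{\gamma}_3$ terms in the $X_3$-component and the rewriting $\dot{\gamma}_2/\gamma_1=\omega(\dot{\gamma})+\dot{\gamma}_3$ in the $X_1$-component --- are exactly the simplifications implicit in the paper's equation (2.12), so nothing is missing.
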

\begin{proof}
By (2.3), we have
\begin{equation}
\dot{\gamma}(t)=\frac{\dot{\gamma}_1}{\gamma_1}X_1+\dot{\gamma}_3X_2+\omega(\dot{\gamma}(t))X_3.
\end{equation}
By Lemma 2.1 and (2.10), we have
\begin{align}
&\nabla^{1,\alpha}_{\dot{\gamma}}X_1=-\frac{L}{2}\omega(\dot{\gamma}(t))X_2-(1-\alpha)\left(\frac{\dot{\gamma_3}(t)}{2}+\omega(\dot{\gamma}(t))\right)X_3,\nonumber\\
&\nabla^{1,\alpha}_{\dot{\gamma}}X_2=\frac{L}{2}\omega(\dot{\gamma}(t))X_1+\frac{(1-\alpha)\dot{\gamma_1}(t)}{2\gamma_1(t)}X_3,\nonumber\\
&\nabla^{1,\alpha}_{\dot{\gamma}}X_3=(1-\alpha)L\left(\frac{\dot{\gamma_3}(t)}{2}+\omega(\dot{\gamma}(t))\right)X_1-(1-\alpha)\frac{L\dot{\gamma}_1(t)}{2\gamma_1(t)}X_2.\nonumber\\
\end{align}
By (2.10) and (2.11), we have
\begin{align}
\nabla^{1,\alpha}_{\dot{\gamma}}\dot{\gamma}&=
\left[\frac{\ddot{\gamma}_1\gamma_1-(\dot{\gamma}_1)^2}{\gamma_1^2}+L\omega(\dot{\gamma}(t))\left(\frac{(1-\alpha)\dot{\gamma_2}(t)}{\gamma_1}+\frac{\alpha\dot{\gamma_3}(t)}{2}\right)\right]X_1\nonumber\\
&+\left[\ddot{\gamma}_3-\frac{(2-\alpha)\dot{\gamma}_1L}{2\gamma_1}\omega(\dot{\gamma}(t)\right]X_2+\left[\frac{d}{dt}\omega(\dot{\gamma}(t))-\frac{(1-\alpha)\dot{\gamma}_1}{\gamma_1}\omega(\dot{\gamma}(t))\right]X_3.\nonumber\\
\end{align}
By (2.7), (2.10) and (2.12), we get Lemma 2.4.
\end{proof}
Similarly,
\begin{defn}(\cite{YS})
Let $\gamma:[a,b]\rightarrow (\mathbb{M},g_L)$ be a Euclidean $C^2$-smooth regular curve in the Riemannian manifold $(\mathbb{M},g_L)$.
We define the intrinsic curvature $k_{\gamma}^{\infty,\nabla^{1,\alpha}}$ of $\gamma$ at $\gamma(t)$ to be
$$k_{\gamma}^{\infty,\nabla^{1,\alpha}}:={\rm lim}_{L\rightarrow +\infty}k_{\gamma}^{L,\nabla^{1,\alpha}},$$
if the limit exists.
\end{defn}
\indent We introduce the following notation: for continuous functions $F_1,F_2:(0,+\infty)\rightarrow \mathbb{R}$,
\begin{equation}
F_1(L)\sim F_2(L),~~as ~~L\rightarrow +\infty\Leftrightarrow {\rm lim}_{L\rightarrow +\infty}\frac{F_1(L)}{F_2(L)}=1.
\end{equation}
Then, we have
\begin{lem}
Let $\gamma:[a,b]\rightarrow (\mathbb{M},g_L)$ be a Euclidean $C^2$-smooth regular curve in the Riemannian manifold $(\mathbb{M},g_L)$. Then\\
(1)when $\omega(\dot{\gamma}(t))\neq 0$,\\
\begin{equation}
k_{\gamma}^{\infty,\nabla^{1,\alpha}}=\frac{\sqrt{\left(\frac{(1-\alpha)\dot{\gamma}_2}{\gamma_1(t)}+\frac{\alpha}{2}\dot{\gamma}_3\right)^2+\left(\frac{(2-\alpha)\dot{\gamma}_1}{2\gamma_1}\right)^2}}{|\omega(\dot{\gamma}(t))|},
\end{equation}
(2)when $\omega(\dot{\gamma}(t))= 0 ~~~and~~~\frac{d}{dt}(\omega(\dot{\gamma}(t)))=0$,\\
\begin{align}
k^{\infty,\nabla^{1,\alpha}}_{\gamma}&=\Bigg\{\left\{\left[\frac{\ddot{\gamma}_1\gamma_1-\dot{\gamma}_1^2}{\gamma_1^2}\right]^2+\dot{\gamma}_3^2\right\}
\cdot\left[\left(\frac{\dot{\gamma}_1}{\gamma_1}\right)^2+\dot{\gamma}_3^2\right]^{-2}\nonumber\\
&-\left[\frac{\gamma_1\dot{\gamma}_1\ddot{\gamma}_1-\dot{\gamma}_1^3}{\gamma_1^3}+\dot{\gamma}_3\ddot{\gamma}_3\right]^2
\cdot\left[\left(\frac{\dot{\gamma}_1}{\gamma_1}\right)^2+\dot{\gamma}_3^2\right]^{-3}\Bigg\}^{\frac{1}{2}},\nonumber\\
\end{align}
(3)when $\omega(\dot{\gamma}(t))= 0 ~~~and~~~\frac{d}{dt}(\omega(\dot{\gamma}(t)))\neq0$,\\
\begin{equation}
{\rm lim}_{L\rightarrow +\infty}\frac{k_{\gamma}^{L,\nabla^{1,\alpha}}}{\sqrt{L}}=\frac{|\frac{d}{dt}(\omega(\dot{\gamma}(t)))|}{\left(\frac{\dot{\gamma}_1}{\gamma_1}\right)^2+\dot{\gamma}_3^2}.
\end{equation}
\end{lem}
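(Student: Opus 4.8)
The plan is to treat the statement as a purely asymptotic analysis of the closed form for $k_{\gamma}^{L,\nabla^{1,\alpha}}$ supplied by Lemma 2.4, organized by the three cases according to the vanishing of $\omega(\dot{\gamma}(t))$ and of $\frac{d}{dt}\omega(\dot{\gamma}(t))$. Throughout I would abbreviate $\omega:=\omega(\dot{\gamma}(t))$ and write the quantity under the square root in (2.8) as $\mathcal{N}(L)\,D(L)^{-2}-\mathcal{E}(L)^2\,D(L)^{-3}$, where $D(L)=(\dot{\gamma}_1/\gamma_1)^2+\dot{\gamma}_3^2+L\omega^2=\|\dot{\gamma}\|_L^2$ is the squared Riemannian speed, $\mathcal{N}(L)$ is the sum of the three bracketed squares, and $\mathcal{E}(L)=\langle\nabla^{1,\alpha}_{\dot{\gamma}}\dot{\gamma},\dot{\gamma}\rangle_L$ is the inner-product term. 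The proof is then just a matter of reading off the leading powers of $L$ in $\mathcal{N}$, $\mathcal{E}$ and $D$ and applying the asymptotic notation (2.13).

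Case (1) ($\omega\neq 0$). I would note that the first two brackets in $\mathcal{N}$ are affine in $L$ with nonzero leading coefficients, so the first is $\sim L\omega\left(\frac{(1-\alpha)\dot{\gamma}_2}{\gamma_1}+\frac{\alpha\dot{\gamma}_3}{2}\right)$ and the second is $\sim-\frac{(2-\alpha)\dot{\gamma}_1 L}{2\gamma_1}\omega$, while the third bracket stays bounded, so its $L$-weighted square contributes only $O(L)$. Hence $\mathcal{N}(L)\sim L^2\omega^2\left[\left(\frac{(1-\alpha)\dot{\gamma}_2}{\gamma_1}+\frac{\alpha}{2}\dot{\gamma}_3\right)^2+\left(\frac{(2-\alpha)\dot{\gamma}_1}{2\gamma_1}\right)^2\right]$ and $D(L)\sim L\omega^2$, giving $\mathcal{N}D^{-2}\to\frac{1}{\omega^2}\left[\left(\frac{(1-\alpha)\dot{\gamma}_2}{\gamma_1}+\frac{\alpha}{2}\dot{\gamma}_3\right)^2+\left(\frac{(2-\alpha)\dot{\gamma}_1}{2\gamma_1}\right)^2\right]$. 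The crux is the subtracted term: since $\mathcal{E}(L)=O(L)$ we have $\mathcal{E}^2=O(L^2)$, whereas $D^3\sim L^3\omega^6=O(L^3)$, so $\mathcal{E}^2 D^{-3}=O(L^{-1})\to 0$. Taking the square root yields (2.15). I expect this balancing — showing that the genuinely quadratic-in-$L$ piece $\mathcal{E}^2$ is nonetheless killed by the cubic growth of $D^3$, while the surviving term persists because $\mathcal{N}$ and $D^2$ are both quadratic — to be the only real subtlety of the lemma.

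Cases (2) and (3) ($\omega=0$). Once $\omega=0$ the speed reduces to the $L$-independent quantity $D=(\dot{\gamma}_1/\gamma_1)^2+\dot{\gamma}_3^2$, and I would start from the specialized formula (2.9). In case (2) the extra hypothesis $\frac{d}{dt}\omega=0$ eliminates the only remaining $L$-dependent summand $L\left[\frac{d}{dt}\omega\right]^2$, so the whole expression is constant in $L$ and the limit is obtained simply by deleting that term, which is precisely (2.16). In case (3), $\frac{d}{dt}\omega\neq 0$ forces $L\left[\frac{d}{dt}\omega\right]^2$ to dominate $\mathcal{N}$, so $\mathcal{N}(L)\sim L\left[\frac{d}{dt}\omega\right]^2$ while both the subtracted piece $\mathcal{E}^2 D^{-3}$ and the remaining bounded summands of $\mathcal{N}D^{-2}$ contribute only $O(1)$; dividing by $L$ and letting $L\to+\infty$ leaves $\frac{[\frac{d}{dt}\omega]^2}{D^2}$ under the root, and extracting the square root gives (2.17). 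Both of these cases are routine once the $L$-independence of $D$ is recorded, so they present no obstacle beyond careful bookkeeping of which summands survive.
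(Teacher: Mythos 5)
Your proposal is correct and takes essentially the same route as the paper: the paper's proof likewise reads off the leading powers of $L$ from the exact expression in Lemma 2.4, obtaining in case (1) that $||\nabla^{1,\alpha}_{\dot{\gamma}}\dot{\gamma}||_L^2\sim L^2\omega(\dot{\gamma}(t))^2\big[\big(\tfrac{(1-\alpha)\dot{\gamma}_2}{\gamma_1}+\tfrac{\alpha\dot{\gamma}_3}{2}\big)^2+\big(\tfrac{(2-\alpha)\dot{\gamma}_1}{2\gamma_1}\big)^2\big]$, $||\dot{\gamma}||_L^2\sim L\omega(\dot{\gamma}(t))^2$, and $\langle\nabla^{1,\alpha}_{\dot{\gamma}}\dot{\gamma},\dot{\gamma}\rangle_L^2=O(L^2)$ against $||\dot{\gamma}||_L^6\sim L^3\omega(\dot{\gamma}(t))^6$, which is exactly your observation that the subtracted term is $O(L^{-1})$, and it handles cases (2) and (3) by the same $L$-independence of the speed and dominance of $L[\tfrac{d}{dt}\omega(\dot{\gamma}(t))]^2$ that you describe. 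There is nothing missing, and your isolation of the $\mathcal{E}^2D^{-3}$ balancing as the one genuine subtlety is precisely the content of the paper's argument.
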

\begin{proof}
By (2.13), when $\omega(\dot{\gamma}(t))\neq 0$, we have
$$||\nabla^{1,\alpha}_{\dot{\gamma}}{\dot{\gamma}}||_L^2\sim L^2\omega(\dot{\gamma}(t))^2\left[\left(\frac{(1-\alpha)\dot{\gamma}_2}{\gamma_1}+\frac{\alpha\dot{\gamma}_3}{2}\right)^2+\left(\frac{(2-\alpha)\dot{\gamma}_1}{2\gamma_1}\right)^2\right],~~as~~L\rightarrow +\infty,$$
$$||\dot{\gamma}||^2_L=\left(\frac{\dot{\gamma}_1}{\gamma_1}\right)^2+\dot{\gamma}_3^2+L\omega(\dot{\gamma}(t))^2\sim L\omega(\dot{\gamma}(t))^2,~~as~~L\rightarrow +\infty,$$
$$\langle \nabla^{1,\alpha}_{\dot{\gamma}}{\dot{\gamma}},\dot{\gamma}\rangle^2_L\sim O(L^2)~~as~~L\rightarrow +\infty.$$
Therefore
$$\frac{||\nabla^{1,\alpha}_{\dot{\gamma}}{\dot{\gamma}}||_L^2}{||\dot{\gamma}||^4_L}\rightarrow \frac{\left(\frac{(1-\alpha)\dot{\gamma}_2}{\gamma_1}+\frac{\alpha\dot{\gamma}_3}{2}\right)^2+\left(\frac{(2-\alpha)\dot{\gamma}_1}{2\gamma_1}\right)^2}{\omega(\dot{\gamma}(t))^2},~~as~~L\rightarrow +\infty,$$
$$\frac{\langle \nabla^{1,\alpha}_{\dot{\gamma}}{\dot{\gamma}},\dot{\gamma}\rangle^2_L}{||\dot{\gamma}||^6_L}\rightarrow 0,~~as~~L\rightarrow +\infty.$$
So by (2.7), we have (2.14).\\
Obviously, when $\omega(\dot{\gamma}(t))= 0$
~~~and~~~$\frac{d}{dt}(\omega(\dot{\gamma}(t)))=0,$ we have (2.15).\\
 When
        $\omega(\dot{\gamma}(t))= 0$
~~~and~~~$\frac{d}{dt}(\omega(\dot{\gamma}(t)))\neq 0,$\\
 we have
$$||\nabla^{1,\alpha}_{\dot{\gamma}}{\dot{\gamma}}||_L^2\sim L[\frac{d}{dt}(\omega(\dot{\gamma}(t)))]^2,~~as~~L\rightarrow +\infty,$$
$$||\dot{\gamma}||^2_L=\left(\frac{\dot{\gamma}_1}{\gamma_1}\right)^2+\dot{\gamma}_3^2,$$
$$\langle \nabla^{1,\alpha}_{\dot{\gamma}}{\dot{\gamma}},\dot{\gamma}\rangle^2_L=O(1)~~as~~L\rightarrow +\infty.$$
By (2.7), we get (2.16).
\end{proof}
\begin{defn}(\cite{YS})
 If $\Sigma\subset(\mathbb{M},g_L)$ is a Euclidean $C^2$-smooth compact and oriented surface, then this surface $\Sigma\subset(\mathbb{M},g_L)$ is regular.
\end{defn}
First we assume that there exists
a Euclidean $C^2$-smooth function $u:\mathbb{M}\rightarrow \mathbb{R}$ which satisfies
$\Sigma=\{(x_1,x_2,x_3)\in \mathbb{M}:u(x_1,x_2,x_3)=0\}$
and $u_{x_1}\partial_{x_1}+u_{x_2}\partial_{x_2}+u_{x_3}\partial_{x_3}\neq 0.$\\
\begin{defn}(\cite{YS})
 A point $x\in\Sigma$ is called {\it characteristic} if $\nabla_Hu(x)=0$, where $\nabla_Hu=X_1(u)X_1+X_2(u)X_2.$
\end{defn}
Then we have the characteristic set $C(\Sigma):=\{x\in\Sigma|\nabla_Hu(x)=0\}.$ Nextly, our computations will
be local and away from characteristic points of $\Sigma$. In order to facilitate the next calculation, let us define first
$$p:=X_1u,~~~~q:=X_2u ,~~{\rm and}~~r:=\widetilde{X}_3u.$$
We then define
\begin{align}
&l:=\sqrt{p^2+q^2},~~~~l_L:=\sqrt{p^2+q^2+r^2},~~~~\overline{p}:=\frac{p}{l},\\
&\overline{q}:=\frac{q}{l},~~~~
\overline{p_L}:=\frac{p}{l_L},~~~~\overline{q_L}:=\frac{q}{l_L},~~~~\overline{r_L}:=\frac{r}{l_L}.\notag
\end{align}
In particular, $\overline{p}^2+\overline{q}^2=1$. These functions are well defined at every non-characteristic point. Let
\begin{align}
v_L=\overline{p_L}X_1+\overline{q_L}X_2+\overline{r_L}\widetilde{X_3},~~~~e_1=\overline{q}X_1-\overline{p}X_2,~~~~
e_2=\overline{r_L}~~\overline{p}X_1+\overline{r_L}~~ \overline{q}X_2-\frac{l}{l_L}\widetilde{X_3},
\end{align}
then $v_L$ is the Riemannian unit normal vector to $\Sigma$ and $e_1,e_2$ are the orthonormal basis of $\Sigma$. On $T\Sigma$ we define a linear transformation $J_L:T\Sigma\rightarrow T\Sigma$ such that
\begin{equation}
J_L(e_1):=e_2;~~~~J_L(e_2):=-e_1.
\end{equation}
For every $U,V\in T\Sigma$, we define $\nabla^{\Sigma,{1,\alpha}}_UV=\pi \nabla^{1,\alpha}_UV$ where $\pi:T\mathbb{M}\rightarrow T\Sigma$ is the projection. Then $\nabla^{\Sigma,{1,\alpha}}$ is the Levi-Civita connection on $\Sigma$
with respect to the metric $g_L$. By (2.12),(2.18) and
\begin{equation}
\nabla^{\Sigma,{1,\alpha}}_{\dot{\gamma}}\dot{\gamma}=\langle \nabla^{1,\alpha}_{\dot{\gamma}}\dot{\gamma},e_1\rangle_Le_1+\langle \nabla^{1,\alpha}_{\dot{\gamma}}\dot{\gamma},e_2\rangle_Le_2,
\end{equation}
we have
\begin{align}
\nabla^{\Sigma,{1,\alpha}}_{\dot{\gamma}}\dot{\gamma}&=
\Bigg\{\overline{q}\left[\frac{\ddot{\gamma}_1\gamma_1-(\dot{\gamma}_1)^2}{\gamma_1^2}+L\omega(\dot{\gamma}(t))\left(\frac{(1-\alpha)\dot{\gamma_2}(t)}{\gamma_1}+\frac{\alpha\dot{\gamma_3}(t)}{2}\right)\right]-\overline{p}\left[\ddot{\gamma}_3-\frac{(2-\alpha)\dot{\gamma}_1L}{2\gamma_1}\omega(\dot{\gamma}(t)\right]\Bigg\}e_1\nonumber\\
&+\Bigg\{\overline{r_L}~~\overline{p}\left[\frac{\ddot{\gamma}_1\gamma_1-(\dot{\gamma}_1)^2}{\gamma_1^2}+L\omega(\dot{\gamma}(t))\left(\frac{(1-\alpha)\dot{\gamma_2}(t)}{\gamma_1}+\frac{\alpha\dot{\gamma_3}(t)}{2}\right)\right]+\overline{r_L}~~ \overline{q}\left[\ddot{\gamma}_3-\frac{(2-\alpha)\dot{\gamma}_1L}{2\gamma_1}\omega(\dot{\gamma}(t)\right]\nonumber\\
&-\frac{l}{l_L}L^{\frac{1}{2}}\left[\frac{d}{dt}\omega(\dot{\gamma}(t))-\frac{(1-\alpha)\dot{\gamma}_1}{\gamma_1}\omega(\dot{\gamma}(t))\right]\Bigg\}e_2.\nonumber\\
\end{align}
Moreover if $\omega(\dot{\gamma}(t))=0$, then
\begin{align}
\nabla^{\Sigma,{1,\alpha}}_{\dot{\gamma}}\dot{\gamma}&=
\Bigg\{\overline{q}\left[\frac{\ddot{\gamma}_1\gamma_1-(\dot{\gamma}_1)^2}{\gamma_1^2}\right]-\overline{p}\ddot{\gamma}_3\Bigg\}e_1+\Bigg\{\overline{r_L}~~\overline{p}\frac{\ddot{\gamma}_1\gamma_1-(\dot{\gamma}_1)^2}{\gamma_1^2}+\overline{r_L}~~ \overline{q}
\ddot{\gamma}_3-\frac{l}{l_L}L^{\frac{1}{2}}\frac{d}{dt}\omega(\dot{\gamma}(t))\Bigg\}e_2.\nonumber\\
\end{align}
\begin{defn}(\cite{YS})
Let $\Sigma\subset(\mathbb{M},g_L)$ be a regular surface.
Let $\gamma:[a,b]\rightarrow \Sigma$ be a Euclidean $C^2$-smooth regular curve. The geodesic curvature $k^{L,\nabla^{1,\alpha}}_{\gamma,\Sigma}$ of $\gamma$ at $\gamma(t)$ is defined as
\begin{equation}
k^{L,\nabla^{1,\alpha}}_{\gamma,\Sigma}:=\sqrt{\frac{||\nabla^{\Sigma,{1,\alpha}}_{\dot{\gamma}}{\dot{\gamma}}||_{\Sigma,L}^2}{||\dot{\gamma}||^4_{\Sigma,L}}-\frac{\langle \nabla^{\Sigma,{1,\alpha}}_{\dot{\gamma}}{\dot{\gamma}},\dot{\gamma}\rangle^2_{\Sigma,{1,\alpha}}}{||\dot{\gamma}||^6_{\Sigma,L}}}.
\end{equation}
\end{defn}
\begin{defn}(\cite{YS})
Let $\Sigma\subset(\mathbb{M},g_L)$ be a regular surface. Let $\gamma:[a,b]\rightarrow \Sigma$ be a Euclidean $C^2$-smooth regular curve.
We define the intrinsic geodesic curvature $k_{\gamma,\Sigma}^{\infty,{1,\alpha}}$ of $\gamma$ at $\gamma(t)$ to be
$$k_{\gamma,\Sigma}^{\infty,\nabla^{1,\alpha}}:={\rm lim}_{L\rightarrow +\infty}k_{\gamma,\Sigma}^{L,\nabla^{1,\alpha}},$$
if the limit exists.
\end{defn}
\vskip 0.5 true cm
\begin{lem}
Let $\Sigma\subset(\mathbb{M},g_L)$ be a regular surface.
Let $\gamma:[a,b]\rightarrow \Sigma$ be a Euclidean $C^2$-smooth regular curve. Then\\
(1)when $\omega(\dot{\gamma}(t))\neq 0,$
\begin{equation}
k_{\gamma,\Sigma}^{\infty,\nabla^{1,\alpha}}=\frac{|\overline{p}\frac{(2-\alpha)\dot{\gamma}_1}{2\gamma_1}+\overline{q}\left(\frac{(1-\alpha)\dot{\gamma}_2}{\gamma_1}+\frac{\alpha\dot{\gamma}_3}{2}\right)|}{|\omega(\dot{\gamma}(t))|},
\end{equation}
(2)when $\omega(\dot{\gamma}(t))= 0, ~~~and~~~\frac{d}{dt}(\omega(\dot{\gamma}(t)))=0,$
\begin{equation}
k_{\gamma,\Sigma}^{\infty,\nabla^{1,\alpha}}=0,\\
\end{equation}
(3)when $\omega(\dot{\gamma}(t))= 0, ~~~and~~~\frac{d}{dt}(\omega(\dot{\gamma}(t)))\neq0,$
\begin{equation}
{\rm lim}_{L\rightarrow +\infty}\frac{k_{\gamma,\Sigma}^{L,\nabla^{1,\alpha}}}{\sqrt{L}}=\frac{|\frac{d}{dt}(\omega(\dot{\gamma}(t)))|}{\left(\overline{q}\frac{\dot{\gamma}_1}{\gamma_1}-\overline{p}\dot{\gamma}_3\right)^2}.
\end{equation}
\end{lem}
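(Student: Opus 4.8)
The plan is to substitute the explicit two-component expression (2.22) for $\nabla^{\Sigma,1,\alpha}_{\dot\gamma}\dot\gamma$ into the geodesic-curvature formula (2.23) and then pass to the limit $L\to+\infty$ in each of the three regimes, in direct analogy with the proof of Lemma 2.6 but now carried out inside the surface frame $\{e_1,e_2\}$. First I would record that $\{e_1,e_2\}$ is $g_L$-orthonormal, so writing $\nabla^{\Sigma,1,\alpha}_{\dot\gamma}\dot\gamma = A\,e_1 + B\,e_2$ with $A,B$ the bracketed coefficients of (2.22) gives $\|\nabla^{\Sigma,1,\alpha}_{\dot\gamma}\dot\gamma\|^2_{\Sigma,L} = A^2+B^2$. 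Since $\gamma$ lies on $\Sigma$, its velocity is tangent, $\dot\gamma\in T\Sigma$, so I may expand $\dot\gamma = c_1 e_1 + c_2 e_2$ with $c_1=\langle\dot\gamma,e_1\rangle_L$, $c_2=\langle\dot\gamma,e_2\rangle_L$; using (2.10) and (2.18) these are $c_1=\overline{q}\frac{\dot\gamma_1}{\gamma_1}-\overline{p}\dot\gamma_3$ and $c_2=\overline{r_L}\left(\overline{p}\frac{\dot\gamma_1}{\gamma_1}+\overline{q}\dot\gamma_3\right)-\frac{l}{l_L}L^{1/2}\omega(\dot\gamma(t))$.

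The algebraic identity $\|\nabla\|^2_{\Sigma,L}\|\dot\gamma\|^2_{\Sigma,L}-\langle\nabla,\dot\gamma\rangle^2_{\Sigma,L}=(Ac_2-Bc_1)^2$ then collapses (2.23) to the single quotient $k^{L,\nabla^{1,\alpha}}_{\gamma,\Sigma}=|Ac_2-Bc_1|/\|\dot\gamma\|^3_{\Sigma,L}$, which is much more convenient for limits. It remains to read off the leading-order behaviour of $A,B,c_1,c_2$ and of $\|\dot\gamma\|^2_{\Sigma,L}=\left(\frac{\dot\gamma_1}{\gamma_1}\right)^2+\dot\gamma_3^2+L\omega(\dot\gamma(t))^2$ as $L\to+\infty$, using that $r=\widetilde{X}_3 u=L^{-1/2}X_3u$, whence $\overline{r_L}=O(L^{-1/2})$ and $l/l_L\to 1$.

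In case (1), where $\omega(\dot\gamma(t))\neq 0$, the coefficient $A$ grows like $L\omega(\dot\gamma(t))\left[\overline{p}\frac{(2-\alpha)\dot\gamma_1}{2\gamma_1}+\overline{q}\left(\frac{(1-\alpha)\dot\gamma_2}{\gamma_1}+\frac{\alpha\dot\gamma_3}{2}\right)\right]$ and $c_2\sim -L^{1/2}\omega(\dot\gamma(t))$, while $B=O(L^{1/2})$ and $c_1=O(1)$; hence $Ac_2-Bc_1\sim Ac_2$, and dividing by $\|\dot\gamma\|^3_{\Sigma,L}\sim L^{3/2}|\omega(\dot\gamma(t))|^3$ yields (2.25), the $Bc_1$ contribution being of strictly lower order.

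The delicate point lies in cases (2) and (3), where $\omega(\dot\gamma(t))=0$. The main observation I would isolate is that the tangency relation $\langle\dot\gamma,v_L\rangle_L=0$, which merely re-encodes $\frac{d}{dt}u(\gamma(t))=0$, forces $\overline{p}\frac{\dot\gamma_1}{\gamma_1}+\overline{q}\dot\gamma_3=0$ precisely when $\omega(\dot\gamma(t))=0$ (substitute $\dot\gamma_2=\gamma_1\dot\gamma_3$ into $u_{x_1}\dot\gamma_1+u_{x_2}\dot\gamma_2+u_{x_3}\dot\gamma_3=0$ and use $p=x_1u_{x_1}$, $q=x_1u_{x_2}+u_{x_3}$). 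Consequently $c_2=0$ identically, not merely in the limit, and $\|\dot\gamma\|^2_{\Sigma,L}=c_1^2$ with $c_1=\overline{q}\frac{\dot\gamma_1}{\gamma_1}-\overline{p}\dot\gamma_3$ independent of $L$, so the curvature reduces to $k^{L,\nabla^{1,\alpha}}_{\gamma,\Sigma}=|B|/c_1^2$, where $B=\overline{r_L}\left(\overline{p}\frac{\ddot\gamma_1\gamma_1-\dot\gamma_1^2}{\gamma_1^2}+\overline{q}\ddot\gamma_3\right)-\frac{l}{l_L}L^{1/2}\frac{d}{dt}\omega(\dot\gamma(t))$. If $\frac{d}{dt}\omega(\dot\gamma(t))=0$ then $B=O(L^{-1/2})\to 0$, giving (2.26); if $\frac{d}{dt}\omega(\dot\gamma(t))\neq 0$ then $B\sim -L^{1/2}\frac{d}{dt}\omega(\dot\gamma(t))$, so dividing by $\sqrt{L}$ gives (2.27). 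I expect verifying the identity $\overline{p}\frac{\dot\gamma_1}{\gamma_1}+\overline{q}\dot\gamma_3=0$ from the tangency of $\gamma$ to be the one genuinely non-mechanical step; all the rest is the same asymptotic bookkeeping already used in Lemma 2.6.
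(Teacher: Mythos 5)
Your proposal is correct and follows essentially the same route as the paper: the paper likewise decomposes $\dot\gamma$ in the orthonormal frame $\{e_1,e_2\}$ --- your $c_1,c_2$ are exactly the $\lambda_1,\lambda_2$ of (2.27)--(2.29), and your tangency identity $\overline{p}\frac{\dot\gamma_1}{\gamma_1}+\overline{q}\dot\gamma_3=0$ at points with $\omega(\dot{\gamma}(t))=0$ is precisely what makes $\lambda_2=-\frac{l_L}{l}L^{\frac{1}{2}}\omega(\dot{\gamma}(t))$ vanish there --- and then runs the same $L\to+\infty$ bookkeeping on (2.22). Your only deviation, collapsing (2.23) to $|Ac_2-Bc_1|/\|\dot\gamma\|_{\Sigma,L}^3$ via the Lagrange identity instead of estimating $\|\nabla^{\Sigma,1,\alpha}_{\dot\gamma}\dot\gamma\|_{\Sigma,L}^2$, $\|\dot\gamma\|_{\Sigma,L}$ and $\langle\nabla^{\Sigma,1,\alpha}_{\dot\gamma}\dot\gamma,\dot\gamma\rangle_{\Sigma,L}$ separately as in (2.30)--(2.35), is a cosmetic streamlining rather than a different method.
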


\begin{proof}  We know $\dot{\gamma}(t)=\frac{\dot{\gamma_1}(t)}{\gamma_1(t)}X_1+\gamma_3(t)X_2+\omega(\dot{\gamma}(t))X_3,$
let $$\dot{\gamma}(t)=\lambda_1e_1+\lambda_2e_2.$$
Then
\begin{eqnarray}
       \begin{cases}
\frac{\dot{\gamma_1}(t)}{\gamma_1(t)}=\lambda_1\overline{q}+\lambda_2\overline{r_L}\overline{p}\\[2pt]
\dot{\gamma_3}(t)=-\lambda_1\overline{p}+\lambda_2\overline{r_L}\overline{q}\\[2pt]
\omega(\dot{\gamma}(t))=-\lambda_2\frac{l}{l_L}L^{-\frac{1}{2}},\\[2pt]
 \end{cases}
\end{eqnarray}
we have
\begin{eqnarray}
       \begin{cases}
\lambda_1=\overline{q}\frac{\dot{\gamma_1}(t)}{\gamma_1(t)}-\overline{p}\dot{\gamma_3}(t)\\[2pt]
\lambda_2=-\lambda_2\frac{l_L}{l}L^{\frac{1}{2}}\omega(\dot{\gamma}(t)).\\[2pt]
 \end{cases}
\end{eqnarray}
Thus $\dot{\gamma}\in T\Sigma$, we have
\begin{equation}
\dot{\gamma}=(\overline{q}\frac{\dot{\gamma}_1}{\gamma_1}-\overline{p}\dot{\gamma}_3)e_1-\frac{l_L}{l}L^{\frac{1}{2}}\omega(\dot{\gamma}(t))e_2.
\end{equation}
By (2.21), we have
\begin{align}
||\nabla^{\Sigma,1,\alpha}_{\dot{\gamma}}\dot{\gamma}||^2_{L,\Sigma}&=
\Bigg\{\overline{q}\left[\frac{\ddot{\gamma}_1\gamma_1-(\dot{\gamma}_1)^2}{\gamma_1^2}+L\omega(\dot{\gamma}(t))\left(\frac{(1-\alpha)\dot{\gamma_2}(t)}{\gamma_1}+\frac{\alpha\dot{\gamma_3}(t)}{2}\right)\right]-\overline{p}\left[\ddot{\gamma}_3-\frac{(2-\alpha)\dot{\gamma}_1L}{2\gamma_1}\omega(\dot{\gamma}(t)\right]\Bigg\}^2\nonumber\\
&+\Bigg\{\overline{r_L}~~\overline{p}\left[\frac{\ddot{\gamma}_1\gamma_1-(\dot{\gamma}_1)^2}{\gamma_1^2}+L\omega(\dot{\gamma}(t))\left(\frac{(1-\alpha)\dot{\gamma_2}(t)}{\gamma_1}+\frac{\alpha\dot{\gamma_3}(t)}{2}\right)\right]\nonumber\\
&+\overline{r_L}~~ \overline{q}\left[\ddot{\gamma}_3-\frac{(2-\alpha)\dot{\gamma}_1L}{2\gamma_1}\omega(\dot{\gamma}(t))\right]-\frac{l}{l_L}L^{\frac{1}{2}}\left[\frac{d}{dt}\omega(\dot{\gamma}(t))-\frac{(1-\alpha)\dot{\gamma}_1}{\gamma_1}\omega(\dot{\gamma}(t))\right]\Bigg\}^2\nonumber\\
&\sim L^2\left[\overline{p}\frac{(2-\alpha)\dot{\gamma}_1}{2\gamma_1}+\overline{q}\left(\frac{(1-\alpha)\dot{\gamma}_2}{\gamma_1}+\frac{\alpha\dot{\gamma}_3}{2}\right)\right]^2\omega(\dot{\gamma}(t))^2,~~~~ {\rm as} ~~~~L\rightarrow +\infty.\nonumber\\
\end{align}
Similarly, when $\omega(\dot{\gamma}(t))\neq 0$,
\begin{equation}
||\dot{\gamma}||_{\Sigma,L}=\sqrt{(\overline{q}\frac{\dot{\gamma}_1}{\gamma_1}-\overline{p}\dot{\gamma}_3)^2+(\frac{l_L}{l})^2L\omega(\dot{\gamma}(t))^2}\sim L^{\frac{1}{2}}|\omega(\dot{\gamma}(t))|
,~~~~ {\rm as} ~~~~L\rightarrow +\infty.
\end{equation}
By (2.21) and (2.29), we have
\begin{align}
&\langle \nabla^{\Sigma,1,\alpha}_{\dot{\gamma}}{\dot{\gamma}},\dot{\gamma}\rangle_{\Sigma,L}\nonumber\\
&=
\left(\overline{q}\frac{\dot{\gamma}_1}{f}
-\overline{p}\dot{\gamma}_3\right)\cdot
\Bigg\{\overline{q}\left[\frac{\ddot{\gamma}_1\gamma_1-(\dot{\gamma}_1)^2}{\gamma_1^2}+L\omega(\dot{\gamma}(t))\left(\frac{(1-\alpha)\dot{\gamma_2}(t)}{\gamma_1}+\frac{\alpha\dot{\gamma_3}(t)}{2}\right)\right]-\overline{p}\left[\ddot{\gamma}_3-\frac{(2-\alpha)\dot{\gamma}_1L}{2\gamma_1}\omega(\dot{\gamma}(t)\right]\Bigg\}\nonumber\\
&-\frac{l_L}{l}L^{\frac{1}{2}}\omega(\dot{\gamma}(t))
\cdot\Bigg\{\overline{r_L}~~\overline{p}\left[\frac{\ddot{\gamma}_1\gamma_1-(\dot{\gamma}_1)^2}{\gamma_1^2}+L\omega(\dot{\gamma}(t))\left(\frac{(1-\alpha)\dot{\gamma_2}(t)}{\gamma_1}+\frac{\alpha\dot{\gamma_3}(t)}{2}\right)\right]\nonumber\\
&+\overline{r_L}~~ \overline{q}\left[\ddot{\gamma}_3-\frac{(2-\alpha)\dot{\gamma}_1L}{2\gamma_1}\omega(\dot{\gamma}(t))\right]-\frac{l}{l_L}L^{\frac{1}{2}}\left[\frac{d}{dt}\omega(\dot{\gamma}(t))-\frac{(1-\alpha)\dot{\gamma}_1}{\gamma_1}\omega(\dot{\gamma}(t))\right]\Bigg\}\nonumber\\
&\sim C_0L,
\end{align}
where $C_0$ does not depend on $L$. By (2.23),(2.30)-(2.32), we get (2.24).\\
When $\omega(\dot{\gamma}(t))= 0$~~~and~~~$\frac{d}{dt}(\omega(\dot{\gamma}(t)))=0,$\\
 we have\\
\begin{align}
||\nabla^{\Sigma,1,\alpha}_{\dot{\gamma}}\dot{\gamma}||^2_{L,\Sigma}&=
\left(\overline{q}\frac{\ddot{\gamma}_1\gamma_1-(\dot{\gamma}_1)^2}{\gamma_1^2}-\overline{p}\ddot{\gamma}_3\right)^2+\left(\overline{r_L}~~\overline{p}\frac{\ddot{\gamma}_1\gamma_1-(\dot{\gamma}_1)^2}{\gamma_1^2}
+\overline{r_L}~~ \overline{q}
\ddot{\gamma}_3\right)^2\nonumber\\
&\sim \left(\overline{q}\frac{\ddot{\gamma}_1\gamma_1-(\dot{\gamma}_1)^2}{\gamma_1^2}-\overline{p}\ddot{\gamma}_3\right)^2\nonumber\\
\end{align}
and
\begin{equation}
||\dot{\gamma}||_{\Sigma,L}=|\overline{q}\frac{\dot{\gamma}_1}{\gamma_1}-\overline{p}\dot{\gamma}_3|,
\end{equation}
\begin{align}
\langle \nabla^{\Sigma,1,\alpha}_{\dot{\gamma}}{\dot{\gamma}},\dot{\gamma}\rangle_{\Sigma,L}&=
(\overline{q}\frac{\dot{\gamma}_1}{\gamma_1}
-\overline{p}\dot{\gamma}_3)
\cdot\left(\overline{q}\frac{\ddot{\gamma}_1\gamma_1-(\dot{\gamma}_1)^2}{\gamma_1^2}-\overline{p}\ddot{\gamma}_3\right)\nonumber\\
\end{align}
By (2.33)-(2.35) and (2.23), we get $k^{\infty,1,\alpha}_{\gamma,\Sigma}=0$.\\
When $\omega(\dot{\gamma}(t))= 0$~~~and~~~$\frac{d}{dt}(\omega(\dot{\gamma}(t)))\neq 0,$
 we have\\
$$||\nabla^{\Sigma,L}_{\dot{\gamma}}\dot{\gamma}||^2_{L,\Sigma}\sim L
[\frac{d}{dt}(\omega(\dot{\gamma}(t)))]^2,$$
$$\langle \nabla^{\Sigma,L}_{\dot{\gamma}}{\dot{\gamma}},\dot{\gamma}\rangle_{\Sigma,L}=O(1),$$
so we get (2.26).
\end{proof}
\begin{defn}(\cite{YS})
Let $\Sigma\subset(\mathbb{M},g_L)$ be a regular surface.
Let $\gamma:[a,b]\rightarrow \Sigma$ be a Euclidean $C^2$-smooth regular curve. The signed geodesic curvature $k^{L,1,\alpha,s}_{\gamma,\Sigma}$ of $\gamma$ at $\gamma(t)$ is defined as
\begin{equation}
k^{L,\nabla^{1,\alpha},s}_{\gamma,\Sigma}:=\frac{\langle \nabla^{\Sigma,1,\alpha}_{\dot{\gamma}}{\dot{\gamma}},J_L(\dot{\gamma})\rangle_{\Sigma,L}}{||\dot{\gamma}||^3_{\Sigma,L}},
\end{equation}
where $J_L$ is defined by (2.19).
\end{defn}
\begin{defn}(\cite{YS})
Let $\Sigma\subset(\mathbb{M},g_L)$ be a regular surface and let $\gamma:[a,b]\rightarrow \Sigma$ be a Euclidean $C^2$-smooth regular curve.
We define the intrinsic geodesic curvature $k_{\gamma,\Sigma}^{\infty,\nabla^{1,\alpha},s}$ of $\gamma$ at the non-characteristic point $\gamma(t)$ to be
$$k_{\gamma,\Sigma}^{\infty,\nabla^{1,\alpha},s}:={\rm lim}_{L\rightarrow +\infty}k_{\gamma,\Sigma}^{L,\nabla^{1,\alpha},s},$$
if the limit exists.
\end{defn}
\begin{lem}
Let $\Sigma\subset(\mathbb{M},g_L)$ be a regular surface.
Let $\gamma:[a,b]\rightarrow \Sigma$ be a Euclidean $C^2$-smooth regular curve. Then\\
(1)when $\omega(\dot{\gamma}(t))\neq 0$,
\begin{equation}
k_{\gamma,\Sigma}^{\infty,\nabla^{1,\alpha},s}=\frac{|\overline{p}\frac{(2-\alpha)\dot{\gamma}_1}{2\gamma_1}+\overline{q}\left(\frac{(1-\alpha)\dot{\gamma}_2}{\gamma_1}+\frac{\alpha\dot{\gamma}_3}{2}\right)|}{|\omega(\dot{\gamma}(t))|},
\end{equation}
(2)when $\omega(\dot{\gamma}(t))= 0, ~~ ~and~ ~~\frac{d}{dt}(\omega(\dot{\gamma}(t)))=0$,
\begin{equation}
k^{\infty,\nabla^{1,\alpha},s}_{\gamma,\Sigma}=0,\\
\end{equation}
(3)when $\omega(\dot{\gamma}(t))= 0, ~~ ~and~ ~~\frac{d}{dt}(\omega(\dot{\gamma}(t)))\neq0$,
\begin{equation}
{\rm lim}_{L\rightarrow +\infty}\frac{k_{\gamma,\Sigma}^{L,\nabla^{1,\alpha},s}}{\sqrt{L}}=\frac{(-\overline{q}\frac{\dot{\gamma}_1}{\gamma_1}+\overline{p}\dot{\gamma}_3)\frac{d}{dt}(\omega(\dot{\gamma}(t)))}{|\overline{q}\frac{\dot{\gamma}_1}{\gamma_1}-\overline{p}\dot{\gamma}_3|^3}.
\end{equation}
\end{lem}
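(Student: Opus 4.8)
The plan is to evaluate the signed geodesic curvature directly from its definition (2.36), reusing the frame computations already assembled for Lemma 2.9 and only inserting the rotation operator $J_L$. First I would decompose the velocity in the adapted orthonormal frame $\{e_1,e_2\}$ of $T\Sigma$: by (2.29) one has $\dot{\gamma}=a\,e_1+b\,e_2$ with $a=\overline{q}\frac{\dot{\gamma}_1}{\gamma_1}-\overline{p}\dot{\gamma}_3$ and $b=-\frac{l_L}{l}L^{1/2}\omega(\dot{\gamma}(t))$. Applying $J_L$ via (2.19) then gives $J_L(\dot{\gamma})=-b\,e_1+a\,e_2$, a quarter-turn of $\dot{\gamma}$ inside $T\Sigma$.

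Next I would pair this with the covariant acceleration. Writing $\nabla^{\Sigma,1,\alpha}_{\dot{\gamma}}\dot{\gamma}=c_1 e_1+c_2 e_2$, where $c_1,c_2$ are exactly the two bracketed coefficients displayed in (2.21) (or in the simplified display when $\omega(\dot{\gamma}(t))=0$), orthonormality of $\{e_1,e_2\}$ reduces the numerator of (2.36) to the single scalar $\langle \nabla^{\Sigma,1,\alpha}_{\dot{\gamma}}\dot{\gamma},J_L(\dot{\gamma})\rangle_{\Sigma,L}=a\,c_2-b\,c_1$. For the denominator I would use the norm of $\dot{\gamma}$ already computed in (2.30) and (2.34). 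Thus $k^{L,\nabla^{1,\alpha},s}_{\gamma,\Sigma}=(a c_2 - b c_1)/\|\dot{\gamma}\|^3_{\Sigma,L}$, and the whole problem collapses to tracking the growth in $L$ of the explicit quantities $a,b,c_1,c_2$.

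The last and most delicate step is the $L\to+\infty$ asymptotics, done separately in the three regimes, using throughout that $\overline{r_L}=O(L^{-1/2})$, that $l_L\to l$, and hence $l_L/l\to 1$ and $l/l_L\to 1$. When $\omega(\dot{\gamma}(t))\neq0$ the leading behavior is $c_1\sim L\,\omega(\dot{\gamma}(t))\,M$ with $M=\overline{p}\frac{(2-\alpha)\dot{\gamma}_1}{2\gamma_1}+\overline{q}\big(\frac{(1-\alpha)\dot{\gamma}_2}{\gamma_1}+\frac{\alpha\dot{\gamma}_3}{2}\big)$, while $b\sim -L^{1/2}\omega(\dot{\gamma}(t))$ and both $a$ and $c_2$ are of lower order; hence $-b\,c_1\sim L^{3/2}\omega(\dot{\gamma}(t))^2 M$ dominates $a\,c_2$, and dividing by $\|\dot{\gamma}\|^3_{\Sigma,L}\sim L^{3/2}|\omega(\dot{\gamma}(t))|^3$ produces $M/|\omega(\dot{\gamma}(t))|$, i.e. (2.40). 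When $\omega(\dot{\gamma}(t))=0$ one has $b=0$, so $J_L(\dot{\gamma})=a e_2$ and the numerator collapses to $a\,c_2$; if in addition $\frac{d}{dt}\omega(\dot{\gamma}(t))=0$ then $c_2=O(\overline{r_L})\to0$ while the denominator stays $O(1)$, giving (2.41); if instead $\frac{d}{dt}\omega(\dot{\gamma}(t))\neq0$ then $c_2\sim -L^{1/2}\frac{d}{dt}\omega(\dot{\gamma}(t))$, so $a\,c_2\sim -L^{1/2}a\,\frac{d}{dt}\omega(\dot{\gamma}(t))$ over the $O(1)$ denominator $|a|^3$ yields (2.42) after dividing by $\sqrt{L}$, with the sign $-a=-\overline{q}\frac{\dot{\gamma}_1}{\gamma_1}+\overline{p}\dot{\gamma}_3$ matching the stated numerator.

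The main obstacle I anticipate is the bookkeeping in case (1): both numerator and denominator are of order $L^{3/2}$, so one must verify that the cross term $a\,c_2$ is genuinely of strictly lower order (here $O(L^{1/2})$) and that the $l_L/l$ factors tend to $1$, so that the leading coefficients survive intact. I would also flag a sign point: the computation naturally returns the signed quantity $M/|\omega(\dot{\gamma}(t))|$, so the absolute value appearing in (2.40) should be understood as recording the orientation normalization of the frame $\{e_1,e_2,v_L\}$ fixed by (2.18)--(2.19).
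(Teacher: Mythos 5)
Your proposal is correct and follows essentially the same route as the paper's own proof: decompose $\dot{\gamma}$ in the frame $\{e_1,e_2\}$ via (2.29), rotate with $J_L$ from (2.19) to get $J_L(\dot{\gamma})=\frac{l_L}{l}L^{\frac{1}{2}}\omega(\dot{\gamma}(t))e_1+(\overline{q}\frac{\dot{\gamma}_1}{\gamma_1}-\overline{p}\dot{\gamma}_3)e_2$, pair with the coefficients of (2.21), and track the $L$-asymptotics ($L^{3/2}\omega^2 M$ over $L^{3/2}|\omega|^3$ in case (1), $O(L^{-1/2})$ numerator in case (2), $-L^{1/2}a\,\frac{d}{dt}\omega(\dot{\gamma}(t))$ over $|a|^3$ in case (3)), exactly as in the paper's display (2.40)--(2.43). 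Your closing remark on the sign is also apt, since the paper's own asymptotic yields the signed quantity while its statement of case (1) carries an absolute value.
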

\begin{proof} By (2.19) and (2.29), we have
\begin{equation}
J_L(\dot{\gamma})=\frac{l_L}{l}L^{\frac{1}{2}}\omega(\dot{\gamma}(t))e_1+(\overline{q}\frac{\dot{\gamma}_1}{\gamma_1}-\overline{p}\dot{\gamma}_3)e_2.
\end{equation}
By (2.21) and (2.40), we have
\begin{align}
&\langle \nabla^{\Sigma,1,\alpha}_{\dot{\gamma}}\dot{\gamma},J_L(\dot{\gamma})\rangle_{L,\Sigma}\nonumber\\
&=
\frac{l_L}{l}L^{\frac{1}{2}}\omega(\dot{\gamma}(t))
\Bigg\{\overline{q}\left[\frac{\ddot{\gamma}_1\gamma_1-(\dot{\gamma}_1)^2}{\gamma_1^2}+L\omega(\dot{\gamma}(t))\left(\frac{(1-\alpha)\dot{\gamma_2}(t)}{\gamma_1}+\frac{\alpha\dot{\gamma_3}(t)}{2}\right)\right]-\overline{p}\left[\ddot{\gamma}_3-\frac{(2-\alpha)\dot{\gamma}_1L}{2\gamma_1}\omega(\dot{\gamma}(t)\right]\Bigg\}\nonumber\\
&+(\overline{q}\frac{\dot{\gamma}_1}{f}-\overline{p}\dot{\gamma}_3)\cdot
\Bigg\{\overline{r_L}~~\overline{p}\left[\frac{\ddot{\gamma}_1\gamma_1-(\dot{\gamma}_1)^2}{\gamma_1^2}+L\omega(\dot{\gamma}(t))\left(\frac{(1-\alpha)\dot{\gamma_2}(t)}{\gamma_1}+\frac{\alpha\dot{\gamma_3}(t)}{2}\right)\right]\nonumber\\
&+\overline{r_L}~~ \overline{q}\left[\ddot{\gamma}_3-\frac{(2-\alpha)\dot{\gamma}_1L}{2\gamma_1}\omega(\dot{\gamma}(t))\right]-\frac{l}{l_L}L^{\frac{1}{2}}\left[\frac{d}{dt}\omega(\dot{\gamma}(t))-\frac{(1-\alpha)\dot{\gamma}_1}{\gamma_1}\omega(\dot{\gamma}(t))\right]\Bigg\},\nonumber\\
&\sim L^{\frac{3}{2}}\omega(\dot{\gamma}(t))^2\left[\overline{p}\frac{(2-\alpha)\dot{\gamma}_1}{2\gamma_1}+\overline{q}\left(\frac{(1-\alpha)\dot{\gamma}_2}{\gamma_1}+\frac{\alpha\dot{\gamma}_3}{2}\right)\right]~~{\rm as}~~L\rightarrow +\infty.\nonumber\\
\end{align}
So we get (2.37).\\
 When
$\omega(\dot{\gamma}(t))= 0$~~~and~~~$\frac{d}{dt}(\omega(\dot{\gamma}(t)))=0,$
we get
\begin{align}
&\langle \nabla^{\Sigma,1,\alpha}_{\dot{\gamma}}\dot{\gamma},J_L(\dot{\gamma})\rangle_{L,\Sigma}\nonumber\\
&=(\overline{q}\frac{\dot{\gamma}_1}{f}-\overline{p}\dot{\gamma}_3)\cdot
\Bigg\{\overline{r_L}~~\overline{p}\left[\frac{\ddot{\gamma}_1\gamma_1-(\dot{\gamma}_1)^2}{\gamma_1^2}+L\omega(\dot{\gamma}(t))\left(\frac{(1-\alpha)\dot{\gamma_2}(t)}{\gamma_1}+\frac{\alpha\dot{\gamma_3}(t)}{2}\right)\right]\nonumber\\
&+\overline{r_L}~~ \overline{q}\left[\ddot{\gamma}_3-\frac{(2-\alpha)\dot{\gamma}_1L}{2\gamma_1}\omega(\dot{\gamma}(t))\right]-\frac{l}{l_L}L^{\frac{1}{2}}\left[\frac{d}{dt}\omega(\dot{\gamma}(t))-\frac{(1-\alpha)\dot{\gamma}_1}{\gamma_1}\omega(\dot{\gamma}(t))\right]\Bigg\}\nonumber\\
&\sim C_1L^{-\frac{1}{2}}~~{\rm as}~~L\rightarrow +\infty.
\end{align}
So $k^{\infty,\nabla^{1,\alpha},s}_{\gamma,\Sigma}=0.$\\
 When
$\omega(\dot{\gamma}(t))=0$~~~and~~~$\frac{d}{dt}(\omega(\dot{\gamma}(t)))\neq0,$
 we have\\
\begin{align}
\langle \nabla^{\Sigma,1,\alpha}_{\dot{\gamma}}\dot{\gamma},J_L(\dot{\gamma})\rangle_{L,\Sigma}\sim
L^{\frac{1}{2}}(-\overline{q}\frac{\dot{\gamma}_1}{\gamma_1}+\overline{p}\dot{\gamma}_3)\frac{d}{dt}(\omega(\dot{\gamma}(t)))~~{\rm as}~~L\rightarrow +\infty.
\end{align}
So we get (2.39).
\end{proof}

In the following, we compute the sub-Riemannian limit of the Riemannian Gaussian curvature of surfaces associated to the first kind of deformed Schouten-Van Kampen connection in the affine group. We define the {\it second fundamental form} $II^{\nabla^{1,\alpha},L}$ of the
embedding of $\Sigma$ into $(\mathbb{M},g_L)$:
\begin{equation}
II^{\nabla^{1,\alpha},L}=\left(
  \begin{array}{cc}
   \langle \nabla^{1,\alpha}_{e_1}v_L,e_1\rangle_{L},
    & \langle \nabla^{1,\alpha}_{e_1}v_L,e_2\rangle_{L} \\
   \langle \nabla^{1,\alpha}_{e_2}v_L,e_1\rangle_{L},
    & \langle \nabla^{1,\alpha}_{e_2}v_L,e_2\rangle_{L} \\
  \end{array}
\right).
\end{equation}
Similarly to Theorem 4.3 in \cite{CDPT}, we have
\vskip 0.5 true cm
\begin{thm} The second fundamental form $II^{\nabla^{1,\alpha},L}$ of the
embedding of $\Sigma$ into $(\mathbb{M},g_L)$ is given by
\begin{equation}
II^{\nabla^{1,\alpha},L}=\left(
  \begin{array}{cc}
  h_{11},
    & h_{12} \\
  h_{21} ,
    & h_{22} \\
  \end{array}
\right),
\end{equation}
where
$$h_{11}=\frac{l}{l_L}[X_1(\overline{p})+X_2(\overline{q})],$$
    $$h_{12}=-\frac{l_L}{l}\langle e_1,\nabla_H(\overline{r_L})\rangle_L-\frac{(1-\alpha)\sqrt{L}}{2},$$
    $$h_{21}=-\frac{l_L}{l}\langle e_1,\nabla_H(\overline{r_L})\rangle_L-\frac{(1+\alpha\overline{r_L}^2)\sqrt{L}}{2}+\alpha\overline{q_L}\overline{r_L},$$
    $$h_{22}=-\frac{l^2}{l_L^2}\langle e_2,\nabla_H(\frac{r}{l})\rangle_L+\widetilde{X_3}(\overline{r_L})-(1-\alpha)\overline{p_L}.$$
\end{thm}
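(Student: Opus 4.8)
The plan is to compute the four entries $h_{ij}=\langle\nabla^{1,\alpha}_{e_i}v_L,e_j\rangle_L$ one at a time, directly from the definition in (2.45), using Lemma 2.1 and the Leibniz rule for the metric connection $\nabla^{1,\alpha}$. The first preparatory step is to rewrite the covariant derivatives of Lemma 2.1 in the $g_L$-orthonormal frame $\{X_1,X_2,\widetilde{X_3}\}$ by substituting $X_3=\sqrt{L}\,\widetilde{X_3}$. This produces, for example, $\nabla^{1,\alpha}_{X_1}X_2=\frac{(1-\alpha)\sqrt{L}}{2}\widetilde{X_3}$, $\nabla^{1,\alpha}_{X_1}\widetilde{X_3}=-\frac{(1-\alpha)\sqrt{L}}{2}X_2$, $\nabla^{1,\alpha}_{\widetilde{X_3}}X_1=-\frac{\sqrt{L}}{2}X_2-(1-\alpha)\widetilde{X_3}$, $\nabla^{1,\alpha}_{\widetilde{X_3}}X_2=\frac{\sqrt{L}}{2}X_1$, and $\nabla^{1,\alpha}_{\widetilde{X_3}}\widetilde{X_3}=(1-\alpha)X_1$. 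Recording the coefficients in this frame makes transparent where each power of $\sqrt{L}$ in the target matrix comes from, and it already exhibits the asymmetry $\nabla^{1,\alpha}_{X_1}\widetilde{X_3}\neq-\nabla^{1,\alpha}_{\widetilde{X_3}}X_1$ that reflects the torsion of $\nabla^{1,\alpha}$ for $\alpha\neq1$.

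Next I would expand $v_L=\overline{p_L}X_1+\overline{q_L}X_2+\overline{r_L}\widetilde{X_3}$ and apply $\nabla^{1,\alpha}_{e_i}$ with $e_1=\overline{q}X_1-\overline{p}X_2$ and $e_2=\overline{r_L}\,\overline{p}X_1+\overline{r_L}\,\overline{q}X_2-\frac{l}{l_L}\widetilde{X_3}$ from (2.18), using $\nabla^{1,\alpha}_{e_i}(fX_k)=e_i(f)X_k+f\nabla^{1,\alpha}_{e_i}X_k$. This yields $\nabla^{1,\alpha}_{e_1}v_L$ and $\nabla^{1,\alpha}_{e_2}v_L$ as explicit $\{X_1,X_2,\widetilde{X_3}\}$-combinations whose coefficients mix frame-derivatives of $\overline{p_L},\overline{q_L},\overline{r_L}$ with the structure constants above. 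Since $\nabla^{1,\alpha}$ is metric and $\|v_L\|_L\equiv1$, we have $\langle\nabla^{1,\alpha}_{e_i}v_L,v_L\rangle_L=\frac12 e_i\langle v_L,v_L\rangle_L=0$, so each $\nabla^{1,\alpha}_{e_i}v_L$ is automatically tangent to $\Sigma$; this serves as a consistency check and confirms that its $e_1,e_2$ components exhaust the data.

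I would then take $g_L$-inner products with $e_1$ and $e_2$ and simplify using the algebraic identities $\overline{p}^2+\overline{q}^2=1$, $\overline{p_L}^2+\overline{q_L}^2+\overline{r_L}^2=1$, and $\overline{p_L}=\frac{l}{l_L}\overline{p}$, $\overline{q_L}=\frac{l}{l_L}\overline{q}$. Setting $\mu:=\frac{l}{l_L}$, the relation $\mu^2+\overline{r_L}^2=1$ and its differentiated form $X_i(\mu)=-\frac{\overline{r_L}}{\mu}X_i(\overline{r_L})$, together with $\overline{p}X_i(\overline{p})+\overline{q}X_i(\overline{q})=0$, are the essential tools. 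For $h_{11}$ the $\sqrt{L}$-contributions cancel outright and the derivatives of $\mu$ drop out, collapsing the expression via $\overline{p}^2+\overline{q}^2=1$ to $\frac{l}{l_L}[X_1(\overline{p})+X_2(\overline{q})]$. For $h_{12}$ one checks that the $O(\sqrt{L})$ terms sum to $-(\overline{r_L}^2+\mu^2)\frac{(1-\alpha)\sqrt{L}}{2}=-\frac{(1-\alpha)\sqrt{L}}{2}$, while the $O(1)$ terms, after the $\mu$-differentiation identity is invoked, recombine into $-\frac{l_L}{l}\langle e_1,\nabla_H\overline{r_L}\rangle_L$; the term $h_{22}$ assembles analogously once $\widetilde{X_3}(\mu)$ is folded into $\widetilde{X_3}(\overline{r_L})$ and the horizontal pairing $\langle e_2,\nabla_H(\tfrac{r}{l})\rangle_L$ is identified.

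The main obstacle is the off-diagonal entry $h_{21}$, and more broadly the fact that $h_{12}\neq h_{21}$ because $\nabla^{1,\alpha}$ is not torsion-free. Computing $\nabla^{1,\alpha}_{e_2}v_L$ introduces the genuinely new input $\nabla^{1,\alpha}_{\widetilde{X_3}}v_L$, whose coefficients $\nabla^{1,\alpha}_{\widetilde{X_3}}X_1=-\frac{\sqrt{L}}{2}X_2-(1-\alpha)\widetilde{X_3}$ and $\nabla^{1,\alpha}_{\widetilde{X_3}}\widetilde{X_3}=(1-\alpha)X_1$ are \emph{not} the transposes of those appearing in $h_{12}$; these carry the extra vertical and $\alpha$-weighted pieces that produce the $-\frac{(1+\alpha\overline{r_L}^2)\sqrt{L}}{2}+\alpha\overline{q_L}\overline{r_L}$ in $h_{21}$. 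The delicate bookkeeping is to keep the $O(\sqrt{L})$ connection terms strictly separate from the $O(1)$ derivative terms, and to convert consistently between the barred families via $\overline{p_L}=\mu\overline{p}$, $\overline{q_L}=\mu\overline{q}$ and $\mu^2+\overline{r_L}^2=1$; once this is done the remaining simplifications are routine.
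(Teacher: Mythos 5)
Your proposal is correct and is essentially the paper's own proof in dual form: the paper's first step uses metricity to rewrite $h_{ij}=\langle\nabla^{1,\alpha}_{e_i}v_L,e_j\rangle_L$ as $-\langle\nabla^{1,\alpha}_{e_i}e_j,v_L\rangle_L$ and then carries out exactly the Leibniz computation you describe (Lemma 2.1 rescaled to the orthonormal frame $\{X_1,X_2,\widetilde{X_3}\}$, the frame (2.18), and the identities $\overline{p}^2+\overline{q}^2=1$, $\overline{p_L}=\frac{l}{l_L}\overline{p}$, $\overline{q_L}=\frac{l}{l_L}\overline{q}$, $\bigl(\frac{l}{l_L}\bigr)^2+\overline{r_L}^2=1$), only with the derivative falling on the tangent frame $e_j$ rather than on the normal $v_L$. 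Up to this trivial dualization the two computations are line-for-line the same, including your correct identification of the torsion of $\nabla^{1,\alpha}$ (visible in $\nabla^{1,\alpha}_{\widetilde{X_3}}X_1\neq-\nabla^{1,\alpha}_{X_1}\widetilde{X_3}$) as the source of the asymmetry $h_{12}\neq h_{21}$.
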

\begin{proof}
By $e_i\langle V_L,e_j\rangle_L-\langle\nabla^L_{e_i}V_L,e_j\rangle_L-\langle\nabla^L_{e_i}e_j,V_L\rangle_L=0$ and $e_i\langle V_L,e_j\rangle_L=0$, we have $\langle\nabla^L_{e_i}V_L,e_j\rangle_L=-\langle\nabla^L_{e_i}e_j,V_L\rangle_L,$ $i,j=1,2$.\\
By lemma 2.1 and (2.18),
\begin{align}
\nabla^{1,\alpha}_{e_1}e_1&=\nabla^{1,\alpha}_{(\overline{q}X_1-\overline{p}X_2)}(\overline{q}X_1-\overline{p}X_2)\nonumber\\
&=[\overline{q}X_1(\overline{q})-\overline{p}X_2(\overline{p})]X_1-[\overline{q}X_1(\overline{p})-\overline{p}X_2(\overline{p})]X_2.\nonumber\\
\end{align}
Then
\begin{align}
h_{11}&=-\langle\nabla^{1,\alpha}_{e_1}e_1,V_L\rangle_L\nonumber\\
&=-\overline{p_L}[\overline{q}X_1(\overline{q})-\overline{p}X_2(\overline{p})]+\overline{q_L}[\overline{q}X_1(\overline{p})-\overline{p}X_2(\overline{p})]\nonumber\\
&=\frac{l}{l_L}[X_1(\overline{p})+X_2(\overline{q})].\nonumber\\
\end{align}
Similarly,
\begin{align}
\nabla^{1,\alpha}_{e_1}e_2&=\nabla^L_{(\overline{q}X_1-\overline{p}X_2)}(\overline{r_L}\overline{p}X_1+\overline{r_L}\overline{q}X_2-\frac{l}{l_L}\widetilde{X_3})\nonumber\\
&=[\overline{q}X_1(\overline{r_L}\overline{p})-\overline{p}X_2(\overline{r_L}\overline{p})+\frac{(1-\alpha)\overline{p_L}\sqrt{L}}{2}]X_1\nonumber\\
&+[\overline{q}X_1(\overline{q})-\overline{p}X_2(\overline{q})+\frac{(1-\alpha)\overline{q_L}\sqrt{L}}{2}]X_2\nonumber\\
&+[\frac{(1-\alpha)\overline{r_L}\sqrt{L}}{2}+\overline{p}X_2(\frac{l}{l_L})-\overline{q}X_2(\frac{l}{l_L})]\widetilde{X_3}.\nonumber\\
\end{align}
Then
\begin{align}
h_{12}&=-\langle\nabla^{1,\alpha}_{e_1}e_2,V_L\rangle_L\nonumber\\
&=-\frac{l}{l_L}[\overline{q}X_1(\overline{r_L})-\overline{p}X_2(\overline{r_L})]+\overline{r_L}[\overline{q}X_1(\frac{l}{l_L})-\overline{p}X_2(\frac{l}{l_L})]-\frac{(1-\alpha)\sqrt{L}}{2}\nonumber\\
&=-\frac{l_L}{l}\langle e_1,\nabla_H(\overline{r_L})\rangle_L-\frac{(1-\alpha)\sqrt{L}}{2}.\nonumber\\
\end{align}
Since
\begin{align}
\nabla^{1,\alpha}_{e_2}e_1&=\nabla^{1,\alpha}_{(\overline{r_L}\overline{p}X_1+\overline{r_L}\overline{q}X_2-\frac{l}{l_L}\widetilde{X_3})}(\overline{q}X_1-\overline{p}X_2)\nonumber\\
&=[\overline{r_Lp}X_1(\overline{q})+\overline{r_Lq}X_2(\overline{q})-\frac{l}{l_L}\widetilde{X}_3(\overline{p})+\frac{\overline{p_L}\sqrt{L}}{2}]X_1\nonumber\\
&+[\frac{l}{l_L}\widetilde{X}_3(\overline{p})+\frac{\overline{q_L}\sqrt{L}}{2}-\overline{r_Lp}X_1(\overline{p})-\overline{r_Lq}X_2(\overline{p})]X_2\nonumber\\
&+[-\frac{(1-\alpha)\overline{r_L}\sqrt{L}}{2}+(1-\alpha)\overline{q_L}]\widetilde{X_3}.\nonumber\\
\end{align}
Then
\begin{align}
h_{21}&=-\langle\nabla^{1,\alpha}_{e_2}e_1,V_L\rangle_L\nonumber\\
&=-\frac{l}{l_L}[\overline{q}X_1(\overline{r_L})-\overline{p}X_2(\overline{r_L})]+\overline{r_L}[\overline{q}X_1(\frac{l}{l_L})-\overline{p}X_2(\frac{l}{l_L})]-\frac{(1+\alpha\overline{r_L}^2)\sqrt{L}}{2}+\alpha\overline{q_L}\overline{r_L},\nonumber\\
&=-\frac{l_L}{l}\langle e_1,\nabla_H(\overline{r_L})\rangle_L-\frac{(1+\alpha\overline{r_L}^2)\sqrt{L}}{2}+\alpha\overline{q_L}\overline{r_L}.\nonumber\\
\end{align}
Since
\begin{align}
\nabla^{1,\alpha}_{e_2}e_2&=\nabla^{1,\alpha}_{(\overline{r_L}\overline{p}X_1+\overline{r_L}\overline{q}X_2-\frac{l}{l_L}\widetilde{X_3})}(\overline{r_L}\overline{p}X_1+\overline{r_L}\overline{q}X_2-\frac{l}{l_L}\widetilde{X_3})\nonumber\\
&=[\overline{r_L}\overline{p}X_1(\overline{r_L}\overline{p})+\overline{r_L}\overline{q}X_2(\overline{r_L}\overline{p})-\frac{l}{l_L}\widetilde{X_3}(\overline{r_L}\overline{p})-\frac{(2-\alpha)\overline{r_L}\overline{q_L}\sqrt{L}}{2}+\frac{(1-\alpha)l^2}{l_L^2}]X_1\nonumber\\
&+[\overline{r_L}\overline{p}X_1(\overline{r_L}\overline{q})+\overline{r_L}\overline{q}X_2(\overline{r_L}\overline{q})-\frac{l}{l_L}\widetilde{X_3}(\overline{r_L}\overline{q})+\frac{(2-\alpha)\overline{r_L}\overline{p_L}\sqrt{L}}{2}]X_2\nonumber\\
&+[(1-\alpha)\overline{r_L}\overline{p}\frac{l}{l_L}-\overline{r_L}\overline{p}X_1(\frac{l}{l_L})-\overline{r_L}\overline{q}X_2(\frac{l}{l_L})+\frac{l}{l_L}\widetilde{X_3}(\frac{l}{l_L})]\widetilde{X_3}.\nonumber\\
\end{align}
Then,
\begin{align}
h_{22}&=-\langle\nabla^{1,\alpha}_{e_2}e_2,V_L\rangle_L\nonumber\\
&=-\langle\nabla^L_{e_2}e_2,V_L\rangle_L-(1-\alpha)\overline{p_L}\nonumber\\
&=-\frac{l^2}{l_L^2}\langle e_2,\nabla_H(\frac{r}{l})\rangle_L+\widetilde{X_3}(\overline{r_L})-(1-\alpha)\overline{p_L}.\nonumber\\
\end{align}
\end{proof}
\indent The Riemannian mean curvature $\mathcal{H}_{\nabla^{1,\alpha},L}$ of $\Sigma$ is defined by
$$\mathcal{H}_{\nabla^{1,\alpha},L}:={\rm tr}(II^{\nabla^{1,\alpha},L}).$$
Define the curvature of a connection $\nabla^{1,\alpha}$ by
\begin{equation}
R^{1,\alpha}(X,Y)Z=\nabla^{1,\alpha}_X\nabla^{1,\alpha}_YZ-\nabla^{1,\alpha}_Y\nabla^{1,\alpha}_XZ-\nabla^{1,\alpha}_{[X,Y]}Z.
\end{equation}
Let
\begin{equation}
\mathcal{K}^{\Sigma,\nabla^{1,\alpha}}(e_1,e_2)=-\langle R^{\Sigma,1,\alpha}(e_1,e_2)e_1,e_2\rangle_{\Sigma,L},~~~~\mathcal{K}^{\nabla^{1,\alpha}}(e_1,e_2)=-\langle R^{1,\alpha}(e_1,e_2)e_1,e_2\rangle_L.
\end{equation}
By the Gauss equation, we have
\begin{equation}
\mathcal{K}^{\Sigma,\nabla^{1,\alpha}}(e_1,e_2)=\mathcal{K}^{\nabla^{1,\alpha}}(e_1,e_2)+{\rm det}(II^{\nabla^{1,\alpha},L}).
\end{equation}
\vskip 0.5 true cm
\begin{prop} Away from characteristic points, the horizontal mean curvature associated to the first kind of deformed Schouten-Van Kampen connection $\nabla^{1,\alpha}$, $\mathcal{H}_{\nabla^{1,\alpha},\infty}$ of $\Sigma\subset\mathbb{M}$ is given by
\begin{equation}
\mathcal{H}_{\nabla^{1,\alpha},\infty}={\rm lim}_{L\rightarrow +\infty}\mathbb{}\mathcal{H}_{\nabla^{1,\alpha},L}=X_1(\overline{p})+X_2(\overline{q})-(1-\alpha)\overline{p}.
\end{equation}
\end{prop}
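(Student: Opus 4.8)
The plan is to compute $\mathcal{H}_{\nabla^{1,\alpha},L}=\mathrm{tr}(II^{\nabla^{1,\alpha},L})=h_{11}+h_{22}$ directly from Theorem 2.15 (the off-diagonal entries $h_{12},h_{21}$ never enter the trace) and then to pass to the limit $L\to+\infty$ termwise. The key structural fact to exploit is that, among the auxiliary functions $p=X_1u$, $q=X_2u$, $r=\widetilde{X_3}u$, the first two are independent of $L$ whereas $r=\widetilde{X_3}u=L^{-1/2}X_3u=O(L^{-1/2})$. Hence $l=\sqrt{p^2+q^2}$ is fixed and, away from characteristic points, nonzero, while $l_L=\sqrt{p^2+q^2+r^2}\to l$. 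This gives the elementary limits $l/l_L\to1$, $l^2/l_L^2\to1$, $\overline{p_L}\to\overline{p}$, $\overline{q_L}\to\overline{q}$ and $\overline{r_L}=r/l_L=O(L^{-1/2})\to0$, which I would record at the outset.

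With these limits the two surviving contributions are immediate: from $h_{11}=\frac{l}{l_L}[X_1(\overline{p})+X_2(\overline{q})]$ one obtains $h_{11}\to X_1(\overline{p})+X_2(\overline{q})$, while the last summand of $h_{22}$ gives $-(1-\alpha)\overline{p_L}\to-(1-\alpha)\overline{p}$. Together these produce exactly the three terms in the asserted expression for $\mathcal{H}_{\nabla^{1,\alpha},\infty}$.

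The heart of the argument is to show that the two remaining summands of $h_{22}$, namely $-\frac{l^2}{l_L^2}\langle e_2,\nabla_H(\frac{r}{l})\rangle_L$ and $\widetilde{X_3}(\overline{r_L})$, both tend to $0$. For the first, since $\nabla_H(\frac{r}{l})$ is horizontal while $e_2=\overline{r_L}\,\overline{p}X_1+\overline{r_L}\,\overline{q}X_2-\frac{l}{l_L}\widetilde{X_3}$, the orthonormality of $X_1,X_2,\widetilde{X_3}$ gives $\langle e_2,\nabla_H(\frac{r}{l})\rangle_L=\overline{r_L}[\overline{p}X_1(\frac{r}{l})+\overline{q}X_2(\frac{r}{l})]$; here the prefactor $\overline{r_L}$ is $O(L^{-1/2})$ and $\frac{r}{l}=L^{-1/2}\frac{X_3u}{l}$ has horizontal derivatives of order $O(L^{-1/2})$, so the product is $O(L^{-1})$. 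For the second, $\widetilde{X_3}(\overline{r_L})=L^{-1/2}X_3(\overline{r_L})$ with $\overline{r_L}=L^{-1/2}\frac{X_3u}{l_L}$ and $l_L\to l$, whence $X_3(\overline{r_L})=O(L^{-1/2})$ and $\widetilde{X_3}(\overline{r_L})=O(L^{-1})$. Both terms therefore disappear in the limit.

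Summing the surviving pieces yields $\mathcal{H}_{\nabla^{1,\alpha},\infty}=X_1(\overline{p})+X_2(\overline{q})-(1-\alpha)\overline{p}$, as claimed. I expect the only genuinely delicate point to be the bookkeeping in the previous paragraph: one must keep track of \emph{both} the explicit factors of $\overline{r_L}$ and the $L^{-1/2}$ hidden inside $\widetilde{X_3}$ and inside $r$ itself, and confirm that every $l_L$ appearing in a denominator stays bounded away from $0$ --- which holds away from characteristic points, where $l\neq0$ --- so that the claimed $O(L^{-1})$ decay is genuine rather than an indeterminate form.
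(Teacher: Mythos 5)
Your proposal is correct and follows essentially the same route as the paper: both take the trace $h_{11}+h_{22}$ from Theorem 2.15 and pass to the limit using $\overline{p_L}\to\overline{p}$, $\frac{l}{l_L}\to 1$, $\frac{l^2}{l_L^2}\langle e_2,\nabla_H(\frac{r}{l})\rangle_L=O(L^{-1})$ and $\widetilde{X_3}(\overline{r_L})\to 0$. Your write-up is in fact more careful than the paper's, since you make explicit the $L^{-1/2}$ factors hidden in $r$ and $\widetilde{X_3}$ and the lower bound on $l_L$ away from the characteristic set, whereas the paper only records the resulting asymptotics.
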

\begin{proof} By
$$\frac{l^2}{l_L^2}\langle e_2,\nabla_H(\frac{r}{l})\rangle_L=\frac{\overline{p}r}{l}X_1(\overline{r_L})+\frac{\overline{q}r}{l}X_2(\overline{r_L})=O(L^{-1})$$
$$\frac{l}{l_L}[X_1(\overline{p})+X_2(\overline{q})]\rightarrow X_1(\overline{p})+X_2(\overline{q}),~~~~\widetilde{X_3}(\overline{r_L})\rightarrow 0,$$
$$\overline{q}^2f_2\overline{r_L}L^{-\frac{1}{2}}\rightarrow O(L^{-1}),~~~~\overline{q_L}\rightarrow\overline{q},$$
$$\overline{r_L}\overline{P}^2f_2L^{-\frac{1}{2}}\rightarrow O(L^{-1}),~~~~\overline{p_L}\rightarrow\overline{p},$$
we get (2.57).
\end{proof}
By Lemma 2.1 and (2.54), we have
\begin{lem}
Let $\mathbb{M}$ be the affine group, then
\begin{align}
&
R^{1,\alpha}(X_1,X_2)X_1=\left[\frac{(1-\alpha)^2L}{4}+\frac{L}{2}\right]X_2+(1-\alpha)X_3,~~~ R^{1,\alpha}(X_1,X_2)X_2=-\left[\frac{(1-\alpha)L}{4}+\frac{L}{2}\right]X_1,\nonumber\\
&R^{1,\alpha}(X_1,X_2)X_3=-(1-\alpha)LX_1,~~~
R^{1,\alpha}(X_1,X_3)X_1=\frac{L}{2}\left[(1-\alpha)^2+1\right]X_2+\frac{(1-\alpha)(4-L)}{4}X_3,\nonumber\\ &R^{1,\alpha}(X_1,X_3)X_2=-\frac{(1-\alpha)^2+1}{2}LX_1,~~~
R^{1,\alpha}(X_1,X_3)X_3=\frac{(1-\alpha)(L^2-4L)}{4}X_1,\nonumber\\
&R^{1,\alpha}(X_2,X_3)X_1=0,~~~
R^{1,\alpha}(X_2,X_3)X_2=-\frac{(1-\alpha)L}{4}X_3,~~~ R^{1,\alpha}(X_2,X_3)X_3=\frac{(1-\alpha)L^2}{4}X_2.\nonumber\\
\end{align}
\end{lem}
\vskip 0.5 true cm
\begin{prop} Away from characteristic points, we have
\begin{equation}
\mathcal{K}^{\Sigma,\nabla^{1,\alpha}}(e_1,e_2)\rightarrow B_0+O(L^{-\frac{1}{2}}),~~{\rm as}~~L\rightarrow +\infty,
\end{equation}
where
\begin{align}
B_0&:=-\frac{(2-\alpha)}{2}\langle e_1,\nabla_H(\frac{X_3u}{|\nabla_Hu|})\rangle-(1-\alpha)\overline{q}^2+\frac{(4-3\alpha-\alpha^2)\overline{q}}{2}\frac{X_3u}{|\nabla_Hu|}-\frac{3-\alpha}{4}\left(\frac{X_3u}{|\nabla_Hu|}\right)^2\nonumber\\
&-(1-\alpha)\overline{p}\left[X_1(\overline{p})+X_2(\overline{q})\right].\nonumber\\
\end{align}
\end{prop}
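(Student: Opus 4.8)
The plan is to deduce the statement from the Gauss equation (2.56), which writes $\mathcal{K}^{\Sigma,\nabla^{1,\alpha}}(e_1,e_2)$ as the sum of the ambient term $\mathcal{K}^{\nabla^{1,\alpha}}(e_1,e_2)=-\langle R^{1,\alpha}(e_1,e_2)e_1,e_2\rangle_L$ and the extrinsic term $\det(II^{\nabla^{1,\alpha},L})=h_{11}h_{22}-h_{12}h_{21}$. Both pieces are already explicit for finite $L$: the entries $h_{ij}$ are given by Theorem 2.11 and the curvature operator $R^{1,\alpha}$ on $\{X_1,X_2,X_3\}$ by Lemma 2.12. The proposition is then a matter of expanding each summand and letting $L\to+\infty$. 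Every limit is governed by the rescalings $\sqrt{L}\,\overline{r_L}\to\rho:=\frac{X_3u}{|\nabla_Hu|}$ and $L\,\overline{r_L}^2\to\rho^2$ (which hold because $r=\widetilde{X_3}u=L^{-1/2}X_3u$ and $l_L\to l=|\nabla_Hu|$), together with $\overline{p_L}\to\overline{p}$, $\overline{q_L}\to\overline{q}$ and $l/l_L\to1$.

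For the ambient term I would substitute (2.18) with $\widetilde{X_3}=L^{-1/2}X_3$ into $R^{1,\alpha}(e_1,e_2)e_1$. Bilinearity and antisymmetry reduce $R^{1,\alpha}(e_1,e_2)$ to the three generators $R^{1,\alpha}(X_1,X_2)$, $R^{1,\alpha}(X_1,X_3)$, $R^{1,\alpha}(X_2,X_3)$ with coefficients $\overline{r_L}$, $-\overline{q}\frac{l}{l_L}L^{-1/2}$, $\overline{p}\frac{l}{l_L}L^{-1/2}$ (using $\overline{p}^2+\overline{q}^2=1$); applying Lemma 2.12 and pairing with $e_2$ is then routine. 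The only subtlety is that the weight $\langle X_3,X_3\rangle_L=L$ attaches a factor $L^{1/2}$ to the $X_3$-component of $e_2$, so that the products organise into the scale-invariant quantities $\overline{r_L}^2L$, $\overline{r_L}L^{1/2}$ and $l^2/l_L^2$. The $X_3$-components of $R^{1,\alpha}(X_1,X_3)X_1$ and $R^{1,\alpha}(X_2,X_3)X_2$, once weighted by this $L^{1/2}$, produce an $O(L)$ divergence which I expect to equal $+\frac{(1-\alpha)L}{4}$, while everything else converges and contributes $\rho^2$-, $\rho$- and $\overline{q}^2$-terms.

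For the extrinsic term I would expand $h_{11}h_{22}-h_{12}h_{21}$ from Theorem 2.11. Here $h_{11}\to X_1(\overline{p})+X_2(\overline{q})$ and $h_{22}\to-(1-\alpha)\overline{p}$, since $\frac{l^2}{l_L^2}\langle e_2,\nabla_H(r/l)\rangle_L$ and $\widetilde{X_3}(\overline{r_L})$ are $O(L^{-1})$ exactly as in the proof of Proposition 2.13; hence $h_{11}h_{22}\to-(1-\alpha)\overline{p}[X_1(\overline{p})+X_2(\overline{q})]$. The delicate factor is $h_{12}h_{21}$: each of $h_{12},h_{21}$ carries a $\sqrt{L}$-piece, so their product contributes the second $O(L)$ divergence $+\frac{(1-\alpha)L}{4}$. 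The gradient term $-\frac{l_L}{l}\langle e_1,\nabla_H(\overline{r_L})\rangle_L$ is itself $O(L^{-1/2})$, but against the $\sqrt{L}$-piece of the partner entry it survives, and via $\sqrt{L}\langle e_1,\nabla_H(\overline{r_L})\rangle_L\to\langle e_1,\nabla_H\rho\rangle$ it produces a $-\frac{2-\alpha}{2}\langle e_1,\nabla_H\rho\rangle$ contribution; the $\alpha\overline{r_L}^2\sqrt{L}$ and $\alpha\overline{q_L}\,\overline{r_L}$ factors of $h_{21}$ feed further $\rho^2$- and $\rho$-terms.

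The crux of the argument is the cancellation of the two $O(L)$ divergences: the $+\frac{(1-\alpha)L}{4}$ from $\mathcal{K}^{\nabla^{1,\alpha}}(e_1,e_2)$ must cancel exactly against the $-\frac{(1-\alpha)L}{4}$ coming from $-h_{12}h_{21}$, so that the Gauss sum stays bounded; verifying this matching is the main obstacle and the natural consistency check on Lemma 2.12 and Theorem 2.11. Once it is confirmed, I would collect all $O(1)$ contributions, repeatedly invoke $\overline{p}^2+\overline{q}^2=1$ and the limits $\overline{r_L}L^{1/2}\to\rho$, $l/l_L\to1$, $\overline{p_L}\to\overline{p}$, $\overline{q_L}\to\overline{q}$ to consolidate the $\rho^2$-, $\rho$- and $\overline{q}^2$-terms, and rewrite the surviving horizontal gradient as $\langle e_1,\nabla_H\frac{X_3u}{|\nabla_Hu|}\rangle$. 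This yields the finite limit $B_0$, with all discarded terms of order $O(L^{-1/2})$, giving $\mathcal{K}^{\Sigma,\nabla^{1,\alpha}}(e_1,e_2)\to B_0+O(L^{-1/2})$. A useful final check is $\alpha=0$, where $\nabla^{1,0}=\nabla$ and $B_0$ must reduce to the Levi-Civita Gaussian curvature computed in \cite{YS}.
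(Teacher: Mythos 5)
Your proposal is correct and coincides with the paper's own proof: the paper likewise invokes the Gauss equation (2.56), expands $\langle R^{1,\alpha}(e_1,e_2)e_1,e_2\rangle_L$ via (2.18) into the six generator terms of (2.61), evaluates them with the curvature lemma to obtain $\mathcal{K}^{\nabla^{1,\alpha}}(e_1,e_2)$ with leading term $\frac{(1-\alpha)L}{4}(\overline{p_L}^2+\overline{q_L}^2)$, and expands ${\rm det}(II^{\nabla^{1,\alpha},L})$ from Theorem 2.11 using $\nabla_H(\overline{r_L})=L^{-\frac{1}{2}}\nabla_H\left(\frac{X_3u}{|\nabla_Hu|}\right)+O(L^{-1})$, so that the two $\pm\frac{(1-\alpha)L}{4}$ divergences cancel exactly as you predict. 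Carrying out your final consolidation step carefully is worthwhile: summing the paper's own intermediate formulas (2.62) and (2.63) gives the coefficient $-\frac{2-\alpha}{2}$ for $\left(\frac{X_3u}{|\nabla_Hu|}\right)^2$ rather than the $-\frac{3-\alpha}{4}$ displayed in $B_0$, a discrepancy that your proposed $\alpha=0$ sanity check against the Levi-Civita computation of \cite{YS} would detect.
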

\begin{proof} By (2.18), we have
\begin{align}
&~~~~\langle R^{1,\alpha}(e_1,e_2)e_1,e_2\rangle_L=\overline{r_L}^2\langle R^{1,\alpha}(X_1,X_2)X_1,X_2\rangle_L-2\frac{l}{l_L}\overline{q}L^{-\frac{1}{2}}\overline{r_L}\langle R^{1,\alpha}(X_1,X_2)X_1,X_3\rangle_L\nonumber\\
&+2\frac{l}{l_L}\overline{p}L^{-\frac{1}{2}}\overline{r_L}\langle R^{1,\alpha}(X_1,X_2)X_2,X_3\rangle_L+(\frac{l}{l_L}\overline{q})^2L^{-1}\langle R^{1,\alpha}(X_1,X_3)X_1,X_3\rangle_L\nonumber\\
&-2(\frac{l}{l_L})^2\overline{p}\overline{q}L^{-1}\langle R^{1,\alpha}(X_1,X_3)X_2,X_3\rangle_L+(\overline{p}\frac{l}{l_L})^2L^{-1}\langle R^{1,\alpha}(X_2,X_3)X_2,X_3\rangle_L.\nonumber\\
\end{align}
By Lemma 2.17, we have
\begin{align}
\mathcal{K}^{\nabla^{1,\alpha}}(e_1,e_2)&=\frac{(1-\alpha)(\overline{p_L}^2+\overline{q_L}^2)L}{4}-(1-\alpha)\overline{q_L}^2+2(1-\alpha)\overline{q_L}\overline{r_L}L^\frac{1}{2}-\frac{(1-\alpha)^2L+2L}{4}\overline{r_L}^2.\nonumber\\
\end{align}
By (2.45) and
$$\nabla_H(\overline{r_L})=L^{-\frac{1}{2}}\nabla_H(\frac{X_3u}{|\nabla_Hu|})+O(L^{-1})~~{\rm as}~~L\rightarrow +\infty,$$
we get
\begin{align}
{\rm det}(II^L)&=-\frac{(1-\alpha)L}{4}-\frac{2-\alpha}{2}\langle e_1,\nabla_H(\frac{X_3u}{|\nabla_Hu|})\rangle-(1-\alpha)\overline{p}\left[X_1(\overline{p})+X_2(\overline{q})\right]\nonumber\\
&+\frac{(\alpha-\alpha^2)\overline{q}}{2}\frac{X_3u}{|\nabla_Hu|}-\frac{\alpha-\alpha^2}{4}\left(\frac{X_3u}{|\nabla_Hu|}\right)^2+O(L^{-\frac{1}{2}}).\nonumber\\
\end{align}
By (2.56),(2,62),(2.63)
we get (2.59).
\end{proof}
Let us first consider the case of a regular curve $\gamma:[a,b]\rightarrow (\mathbb{M},g_L)$. We define the Riemannian length measure
$$ds_L=||\dot{\gamma}||_Ldt.$$
\begin{lem}
Let $\gamma:[a,b]\rightarrow (\mathbb{M},g_L)$ be a Euclidean $C^2$-smooth and regular curve. Let
\begin{equation}
d{s}:=|\omega(\dot{\gamma}(t))|dt,~~~~d\overline{s}:=\frac{1}{2}\frac{1}{|\omega(\dot{\gamma}(t))|}\left(\frac{\dot{\gamma}_1^2}{f^2}+\dot{\gamma}_3^2\right)dt.
\end{equation}
Then
\begin{equation}
{\rm lim}_{L\rightarrow +\infty}\frac{1}{\sqrt{L}}\int_{\gamma}ds_L=\int_a^bds.
\end{equation}
When $\omega(\dot{\gamma}(t))\neq 0$, we have
\begin{equation}
\frac{1}{\sqrt{L}}ds_L=ds+d\overline{s}L^{-1}+O(L^{-2}) ~~{\rm as}~~L\rightarrow +\infty.
\end{equation}
When $\omega(\dot{\gamma}(t))= 0$, we have
\begin{equation}
\frac{1}{\sqrt{L}}ds_L= \frac{1}{\sqrt{L}}\sqrt{\frac{\dot{\gamma}_1^2}{\gamma_1^2}+\dot{\gamma}_3^2}dt.
\end{equation}
\end{lem}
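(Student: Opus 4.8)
The plan is to reduce everything to the explicit form of $\|\dot\gamma\|_L$ and then perform an elementary asymptotic analysis. By the decomposition (2.10), $\dot\gamma = \frac{\dot\gamma_1}{\gamma_1}X_1 + \dot\gamma_3 X_2 + \omega(\dot\gamma(t))X_3$, and since $X_1,X_2,\widetilde{X_3}=L^{-1/2}X_3$ form a $g_L$-orthonormal basis we may write $X_3 = L^{1/2}\widetilde{X_3}$, so that
\[
\|\dot\gamma\|_L^2 = \left(\frac{\dot\gamma_1}{\gamma_1}\right)^2 + \dot\gamma_3^2 + L\,\omega(\dot\gamma(t))^2 .
\]
Hence $\frac{1}{\sqrt L}ds_L = \sqrt{\tfrac{A}{L} + B}\,dt$, where I abbreviate $A := \left(\frac{\dot\gamma_1}{\gamma_1}\right)^2 + \dot\gamma_3^2 \ge 0$ and $B := \omega(\dot\gamma(t))^2 \ge 0$, both continuous on the compact interval $[a,b]$. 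Every claim of the lemma is a statement about the behaviour of $\sqrt{A/L + B}$ as $L\to+\infty$.

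First I would dispose of the two pointwise statements. When $\omega(\dot\gamma(t))=0$ we have $B=0$, so $\frac{1}{\sqrt L}ds_L = \frac{1}{\sqrt L}\sqrt{A}\,dt = \frac{1}{\sqrt L}\sqrt{\frac{\dot\gamma_1^2}{\gamma_1^2}+\dot\gamma_3^2}\,dt$, which is the third claim directly. When $\omega(\dot\gamma(t))\neq 0$ we have $B>0$; I factor out $\sqrt B = |\omega(\dot\gamma(t))|$ and apply the binomial expansion $\sqrt{1+x}=1+\tfrac12 x+O(x^2)$ with $x = A/(LB)\to 0$:
\[
\frac{1}{\sqrt L}ds_L = \sqrt B\left(1 + \frac{A}{2LB} + O(L^{-2})\right)dt = |\omega(\dot\gamma(t))|\,dt + \frac{A}{2|\omega(\dot\gamma(t))|}L^{-1}\,dt + O(L^{-2}),
\]
which, recalling the definitions of $ds$ and $d\overline s$, is precisely the second claim. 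These two steps are routine Taylor expansion.

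The genuine point, and the step I expect to be the main obstacle, is the integrated statement. The difficulty is that the first-order expansion above degenerates on the set $\{\omega(\dot\gamma(t))=0\}$, since the coefficient $d\overline s$ carries $|\omega(\dot\gamma(t))|$ in its denominator; one therefore cannot simply integrate that expansion termwise over $[a,b]$ when this set is nonempty. Instead I would argue directly on $\frac{1}{\sqrt L}ds_L = \sqrt{A/L + B}\,dt$ via dominated convergence. For \emph{every} $t\in[a,b]$, regardless of whether $B(t)$ vanishes, one has the pointwise limit
\[
\sqrt{\frac{A(t)}{L} + B(t)} \longrightarrow \sqrt{B(t)} = |\omega(\dot\gamma(t))| \qquad (L\to+\infty),
\]
and, because $\gamma$ is $C^2$ on the compact $[a,b]$, the functions $A,B$ are bounded and for all $L\ge 1$ we have the $L$-independent majorant $\sqrt{A/L+B}\le\sqrt{A+B}$, which is continuous hence integrable. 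The dominated convergence theorem then gives
\[
\lim_{L\to+\infty}\frac{1}{\sqrt L}\int_\gamma ds_L = \lim_{L\to+\infty}\int_a^b \sqrt{\frac{A}{L}+B}\,dt = \int_a^b |\omega(\dot\gamma(t))|\,dt = \int_a^b ds,
\]
which is the first claim. The only care needed is precisely this appeal to dominated convergence rather than to the pointwise asymptotics, so that the characteristic set $\{\omega(\dot\gamma)=0\}$ causes no trouble; everything else is elementary.
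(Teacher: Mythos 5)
Your proof is correct and follows essentially the same route as the paper: the two pointwise claims are handled exactly as the paper does (direct substitution when $\omega(\dot{\gamma}(t))=0$, Taylor expansion of $\sqrt{L^{-1}A+\omega(\dot{\gamma}(t))^2}$ when $\omega(\dot{\gamma}(t))\neq 0$), while for the integrated limit the paper simply says ``similar to the proof of Lemma 6.1 in [BTV]'', and that cited proof is precisely the dominated convergence argument with the $L$-independent majorant $\sqrt{A+B}$ that you spell out. In short, your write-up is a self-contained version of the argument the paper delegates to the reference, correctly identifying that the degeneracy of $d\overline{s}$ on $\{\omega(\dot{\gamma})=0\}$ is why one must pass to dominated convergence rather than integrate the expansion termwise.
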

\begin{proof}
We know that $$||\dot{\gamma}(t)||_L=\sqrt{\left(\frac{\dot{\gamma}_1}{\gamma_1}\right)^2+\dot{\gamma}_3^2+L\omega(\dot{\gamma}(t))^2},$$ similar to the proof of Lemma 6.1 in \cite{BTV}, we can
prove (2.65).
When $\omega(\dot{\gamma}(t))\neq 0$, we have
$$
\frac{1}{\sqrt{L}}ds_L=\sqrt{L^{-1}\left(\left(\frac{\dot{\gamma}_1}{\gamma_1}\right)^2+\dot{\gamma}_3^2\right)+\omega(\dot{\gamma}(t))^2}dt.$$
Using the Taylor expansion, we can prove (2.66). From the definition of $ds_L$ and $\omega(\dot{\gamma}(t))= 0$, we get (2.67).
\end{proof}
Let $\Sigma\subset(\mathbb{M},g_L)$ be a Euclidean $C^2$-smooth surface and $\Sigma=\{u=0\}$. Let $d\sigma_{\Sigma,L}$ denote the surface measure on $\Sigma$ with respect to the Riemannian metric $g_L$. Then similai to Proposition $4.2$ in $[7]$, we have
\begin{align}
&{\rm lim}_{L\rightarrow +\infty}\frac{1}{\sqrt{L}}\int_{\Sigma}d\sigma_{\Sigma,L}=d\sigma_\Sigma:=(\overline{p}\omega_2-\overline{q}\omega_1)\wedge \omega.
\end{align}
We recall the local Gauss-Bonnet theorem for the metric connection (see Proposition 5.2 in \cite{BK}).
\begin{thm}
 Let $\Sigma$
  be an  an oriented compact two-dimensional manifold with many
boundary components $(\partial\Sigma)_i, i\in \{1,\cdot\cdot\cdot,n\},$ given by Euclidean $C^2$-smooth regular and
closed curves $\gamma_i :[0,2\pi]\rightarrow (\partial\Sigma)_i.$ Let $\nabla$ be a metric connection and $\mathcal{K}^\nabla$ be the Gauss
curvature associated to $\nabla$ and $k^{s,\nabla}_{\gamma_i}$
be the signed geodesic curvature associated to $\nabla$, then
\begin{equation}
\int_{\Sigma}\mathcal{K}^\nabla d\sigma_{\Sigma}+\sum_{i=1}^n\int_{\gamma_i}k^{s,\nabla}_{\gamma_i}d{s}=2\pi\chi(M).
\end{equation}
\end{thm}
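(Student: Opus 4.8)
The plan is to deduce the statement from the classical Gauss--Bonnet theorem for the Levi--Civita connection, using the fact that on an oriented Riemannian surface the difference between any two metric connections is recorded by a single globally defined $1$-form. Denote by $\overline{\nabla}$ the Levi--Civita connection of the metric induced on $\Sigma$, by $\mathcal{K}^{\overline{\nabla}}$ and $k^{s,\overline{\nabla}}_{\gamma_i}$ the associated Gauss and signed geodesic curvatures, and by $J$ the rotation by $\pi/2$ fixed by the metric and the orientation (so $Je_1=e_2$, $Je_2=-e_1$ in any positively oriented orthonormal frame). Set $A:=\nabla-\overline{\nabla}$. Since both connections are metric, each $A(X)$ is a skew-symmetric endomorphism of the two-dimensional space $T_x\Sigma$, hence a multiple of $J$; writing $A(X)=\beta(X)\,J$ defines a global $1$-form $\beta$ on $\Sigma$.

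First I would express the two curvatures through a connection $1$-form. An oriented compact surface with non-empty boundary is parallelizable, so there is a global positively oriented orthonormal frame $e_1,e_2$; the metric connection $\nabla$ is then described by the single form $\omega^1_2=-\omega^2_1$, and on a surface the second structure equation collapses to $d\omega^1_2=\mathcal{K}^{\nabla}\,d\sigma_\Sigma$ because the quadratic term $\omega^1_k\wedge\omega^k_2$ vanishes (the diagonal forms are zero). Applying $A(X)=\beta(X)J$ to $e_2$ gives $\omega^1_2=\overline{\omega}^1_2-\beta$, where $\overline{\omega}^1_2$ is the Levi--Civita form in the same frame, and therefore
\[
\mathcal{K}^{\nabla}\,d\sigma_\Sigma=d\omega^1_2=d\overline{\omega}^1_2-d\beta=\mathcal{K}^{\overline{\nabla}}\,d\sigma_\Sigma-d\beta .
\]

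Next I would treat the boundary integrals. Parametrize each $\gamma_i$ by arc length, so that $T=\dot{\gamma_i}$ is a unit field and $ds$ is the arc-length measure. From $\nabla_T T=\overline{\nabla}_T T+\beta(T)\,JT$ and $|JT|=1$ one gets, using the definition (2.36) of the signed geodesic curvature,
\[
k^{s,\nabla}_{\gamma_i}\,ds=\langle\nabla_T T,JT\rangle\,ds=\langle\overline{\nabla}_T T,JT\rangle\,ds+\beta(T)\,ds=k^{s,\overline{\nabla}}_{\gamma_i}\,ds+\beta .
\]
Summing over $i$ and adding the curvature integral, the two occurrences of $\beta$ have opposite signs and are matched by Stokes' theorem $\int_\Sigma d\beta=\sum_i\int_{\gamma_i}\beta$ (each $\gamma_i$ carrying the induced boundary orientation), so they cancel:
\[
\int_{\Sigma}\mathcal{K}^{\nabla}\,d\sigma_\Sigma+\sum_{i=1}^n\int_{\gamma_i}k^{s,\nabla}_{\gamma_i}\,ds=\int_{\Sigma}\mathcal{K}^{\overline{\nabla}}\,d\sigma_\Sigma+\sum_{i=1}^n\int_{\gamma_i}k^{s,\overline{\nabla}}_{\gamma_i}\,ds .
\]
The right-hand side is exactly the classical Gauss--Bonnet theorem for the Levi--Civita connection, which equals $2\pi\chi(\Sigma)$, giving the claim.

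The step requiring the most care, and the conceptual heart of the argument, is the reduction $A(X)=\beta(X)J$ for a global $1$-form $\beta$ together with the bookkeeping of signs and the induced boundary orientation, so that the two $\beta$-terms cancel exactly rather than reinforce. A subsidiary point is the structure-equation identity $d\omega^1_2=\mathcal{K}^{\nabla}\,d\sigma_\Sigma$, which relies on $\nabla$ being metric (so $\omega^i_j$ is skew) and on $\dim\Sigma=2$, so that the torsion of $\nabla$ does not enter the curvature $2$-form. The genuinely topological input, $\sum_i\oint_{\gamma_i}d\theta_i=2\pi\chi(\Sigma)$, is not reproved here but inherited from the Levi--Civita case through this reduction.
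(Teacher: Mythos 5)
Your argument is correct, but it is worth noting that the paper does not actually prove this statement at all: Theorem 2.20 is simply quoted from Proposition 5.2 of the cited work of Klatt (\cite{BK}), so your proposal supplies a self-contained proof where the paper gives only a citation. Your route --- writing $A:=\nabla-\overline{\nabla}$, observing that metricity forces $A(X)$ to be skew and hence $A(X)=\beta(X)J$ for a global $1$-form $\beta$, deducing $\mathcal{K}^{\nabla}\,d\sigma_\Sigma=\mathcal{K}^{\overline{\nabla}}\,d\sigma_\Sigma-d\beta$ and $k^{s,\nabla}_{\gamma_i}\,ds=k^{s,\overline{\nabla}}_{\gamma_i}\,ds+\beta(T)\,ds$, and cancelling via Stokes --- is the standard Chern--Weil-type comparison and is in the same spirit as Klatt's Euler-class argument, so it buys the reader a transparent two-line reduction of the metric-connection Gauss--Bonnet theorem to the classical one. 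Two small remarks: the appeal to parallelizability of $\Sigma$ is dispensable, since $\beta$ is defined frame-independently by the tensor $A$ and the identity $\mathcal{K}^{\nabla}\,d\sigma_\Sigma=\mathcal{K}^{\overline{\nabla}}\,d\sigma_\Sigma-d\beta$ can be checked in local frames (or derived invariantly from $R^{\nabla}=R^{\overline{\nabla}}+d\beta\otimes J$, the quadratic term vanishing because $\beta(X)\beta(Y)J^2$ is symmetric in $X,Y$), which also covers the closed case $n=0$; and your sign conventions do match the paper's, since with $\mathcal{K}^{\nabla}=-\langle R^{\nabla}(e_1,e_2)e_1,e_2\rangle_L$ and $J_L(e_1)=e_2$ one indeed gets $\mathcal{K}^{\nabla}\,d\sigma_\Sigma=d\omega^1_2$ and the two $\beta$-terms cancel precisely when each $\gamma_i$ carries the induced boundary orientation, the same convention under which the classical theorem holds. (The $\chi(M)$ in the statement is a typo for $\chi(\Sigma)$, which your proof correctly produces.)
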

By Lemma 2.14 and Proposition 2.18 and Theorem 2.20. Similar to the proof of Theorem $1.1$ in $[1]$, we have
\begin{thm}
 Let $\Sigma\subset (\mathbb{M},g_L)$
  be a regular surface with finitely many boundary components $(\partial\Sigma)_i,$ $i\in\{1,\cdots,n\}$, given by Euclidean $C^2$-smooth regular and closed curves $\gamma_i:[0,2\pi]\rightarrow (\partial\Sigma)_i$. Suppose that the characteristic set $C(\Sigma)$ satisfies $\mathcal{H}^1(C(\Sigma))=0$ where $\mathcal{H}^1(C(\Sigma))$ denotes the Euclidean $1$-dimensional Hausdorff measure of $C(\Sigma)$ and that
$||\nabla_Hu||_H^{-1}$ is locally summable with respect to the Euclidean $2$-dimensional Hausdorff measure
near the characteristic set $C(\Sigma)$, then
\begin{equation}
\int_{\Sigma}\mathcal{K}^{\Sigma,\nabla^{1,\alpha},\infty}d\sigma_{\Sigma}+\sum_{i=1}^n\int_{\gamma_i}k^{\infty,\nabla^{1,\alpha},s}_{\gamma_i,\Sigma}d{s}=0.
\end{equation}
\begin{proof}
By the Gauss-Bonnet theorem, we have
\begin{equation}
\int_{\Sigma}\mathcal{K}^{\Sigma,\nabla^{1,\alpha},L}\frac{1}{\sqrt{L}}d\sigma_{\Sigma,L}+\sum_{i=1}^n\int_{\gamma_i}k^{L,\nabla^{1,\alpha},s}_{\gamma_i,\Sigma}\frac{1}{\sqrt{L}}d{s}_L=2\pi\frac{\chi(\Sigma)}{\sqrt{L}}.
\end{equation}
So by (2.59),(2.66),(2.67),(2.70),(2.71), we get Theorem 2.21.
\end{proof}
\end{thm}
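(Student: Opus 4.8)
The plan is to obtain the sub-Riemannian identity as the rescaled $L\to+\infty$ limit of the Riemannian Gauss--Bonnet formula of Theorem 2.20, applied on the compact oriented surface $\Sigma$ with the induced connection. The crucial structural input is that $\nabla^{1,\alpha}$ was defined to be a metric connection for $g_L$; consequently its tangential projection $\nabla^{\Sigma,1,\alpha}=\pi\nabla^{1,\alpha}$ is again a metric connection on $(\Sigma,g_L)$, so Theorem 2.20 applies to each $(\Sigma,g_L)$ and to the boundary curves $\gamma_i$. This yields, for every fixed $L>0$,
\begin{equation*}
\int_{\Sigma}\mathcal{K}^{\Sigma,\nabla^{1,\alpha},L}\,d\sigma_{\Sigma,L}+\sum_{i=1}^{n}\int_{\gamma_i}k^{L,\nabla^{1,\alpha},s}_{\gamma_i,\Sigma}\,ds_L=2\pi\chi(\Sigma).
\end{equation*}
Dividing by $\sqrt{L}$ produces the right-hand side $2\pi\chi(\Sigma)/\sqrt{L}$, which vanishes as $L\to+\infty$; thus the theorem follows once I show that each integral on the left, after multiplication by $\tfrac{1}{\sqrt{L}}$, converges to the corresponding term in the claimed identity.

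For the area integral I would combine Proposition 2.18 with the surface-measure rescaling $\tfrac{1}{\sqrt{L}}\,d\sigma_{\Sigma,L}\to d\sigma_\Sigma$. Proposition 2.18 shows that $\mathcal{K}^{\Sigma,\nabla^{1,\alpha},L}=B_0+O(L^{-1/2})$ pointwise away from the characteristic set, so that the integrand $\mathcal{K}^{\Sigma,\nabla^{1,\alpha},L}\tfrac{1}{\sqrt{L}}\,d\sigma_{\Sigma,L}$ converges to $\mathcal{K}^{\Sigma,\nabla^{1,\alpha},\infty}\,d\sigma_\Sigma$ off $C(\Sigma)$. Passing the limit under $\int_\Sigma$ is exactly where the hypotheses enter: since $\mathcal{H}^1(C(\Sigma))=0$ the characteristic set is $d\sigma_\Sigma$-null, and the local summability of $\|\nabla_Hu\|_H^{-1}$ near $C(\Sigma)$ supplies an integrable dominating function, so dominated convergence applies precisely as in the proof of Theorem 1.1 in \cite{BTV}.

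For the boundary term I would use Lemma 2.14 together with the length-measure expansion of Lemma 2.19. On the portion of each $\gamma_i$ where $\omega(\dot\gamma)\neq0$, Lemma 2.14(1) gives $k^{L,\nabla^{1,\alpha},s}_{\gamma_i,\Sigma}\to k^{\infty,\nabla^{1,\alpha},s}_{\gamma_i,\Sigma}$ while $\tfrac{1}{\sqrt{L}}\,ds_L\to ds=|\omega(\dot\gamma)|\,dt$, so the product converges and stays dominated. The delicate set is where $\omega(\dot\gamma)=0$: there $ds$ vanishes while by Lemma 2.14(3) the signed curvature can grow like $\sqrt{L}$, yet the product $k^{L,\nabla^{1,\alpha},s}_{\gamma_i,\Sigma}\tfrac{1}{\sqrt{L}}\,ds_L$ remains bounded, and since such points form a Euclidean-null subset of $[0,2\pi]$ they contribute nothing in the limit. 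Hence $\sum_i\int_{\gamma_i}k^{L,\nabla^{1,\alpha},s}_{\gamma_i,\Sigma}\tfrac{1}{\sqrt{L}}\,ds_L\to\sum_i\int_{\gamma_i}k^{\infty,\nabla^{1,\alpha},s}_{\gamma_i,\Sigma}\,ds$.

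Assembling the three limits in the rescaled Gauss--Bonnet identity then gives the stated formula. The main obstacle I anticipate is analytic rather than computational: it is the justification of interchanging limit and integral, that is, producing the integrable majorant near the characteristic points of $\Sigma$ and controlling the $\sqrt{L}$-blow-up of the signed geodesic curvature at the horizontal points of the $\gamma_i$. Both are resolved by the measure-theoretic hypotheses together with the case distinctions already recorded in Lemma 2.14, Lemma 2.19 and Proposition 2.18.
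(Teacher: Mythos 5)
Your proposal is correct and follows essentially the same route as the paper: both apply the metric-connection Gauss--Bonnet theorem (Theorem 2.20) to $(\Sigma,g_L)$, divide by $\sqrt{L}$ so the Euler-characteristic term $2\pi\chi(\Sigma)/\sqrt{L}$ vanishes, and pass to the limit $L\to+\infty$ using Proposition 2.18 for the curvature asymptotics, Lemma 2.14 for the signed geodesic curvature, Lemma 2.19 and the surface-measure limit (2.68) for the rescaled measures, with the hypotheses on $C(\Sigma)$ justifying dominated convergence. You merely spell out the convergence details (the integrable majorant near $C(\Sigma)$ and the finitely many boundary points with $\omega(\dot{\gamma})=0$, $\frac{d}{dt}\omega(\dot{\gamma})\neq 0$) that the paper compresses into its citations of \cite{BTV} and \cite{BTV1}.
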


\section{Gauss-Bonnet theorems associated to the second kind of deformed Schouten-Van Kampen connection in the affine group}
\indent Let $H_2={\rm span}\{X_2,X_3\}$ be the second kind of horizontal distribution on $\mathbb{M}$, then $H_2^\bot={\rm span}\{X_1\}$. Let $\nabla$ be the Levi-Civita connection on $\mathbb{M}$ with respect to $g_L$, and we recall the Schouten-Van Kampen connection $\nabla^{2,\alpha,s}$ by the following formulas
\begin{equation}
\nabla^{2,\alpha,s}_XY=P^2\nabla_X{P^2Y}+P^{2,\bot}\nabla_X{P^{2,\bot} Y},
\end{equation}
where $P^2$(resp.$P^{2,\bot}$) be the projection on $H_2$ (resp.$H_2^\bot$).\\
Nextly, we define the second kind of deformed Schouten-Van Kampen connection which is a metric connection in the affine group:
\begin{align}
\nabla^{2,\alpha}_XY&=(1-\alpha)\nabla_XY+\alpha\nabla^{2,\alpha,s}_XY\nonumber\\
&=(1-\alpha)\nabla_XY+\alpha P^2\nabla_X{P^2Y}+\alpha P^{2,\bot}\nabla_X{P^{2,\bot} Y},\nonumber\\
\end{align}
where $\alpha$ is a constant.\\
 By lemma 2.1 in \cite{YS} and (3.2), we have the following lemma
\begin{lem}
Let $\mathbb{M}$ be the affine group, then
\begin{align}
&\nabla^{2,\alpha}_{X_1}X_1=0,~~~\nabla^{2,\alpha}_{X_1}X_2=\frac{1}{2}X_3,~~~ \nabla^{2,\alpha}_{X_1}X_3=-\frac{L}{2}X_2,\nonumber\\
&\nabla^{2,\alpha}_{X_2}X_1=-\frac{1-\alpha}{2}X_3,~~~\nabla^{2,\alpha}_{X_2}X_2=0,~~~\nabla^{2,\alpha}_{X_2}X_3=\frac{(1-\alpha)L}{2}X_1,\nonumber\\
&\nabla^{2,\alpha}_{X_3}X_1=-\frac{(1-\alpha)L}{2}X_2-(1-\alpha)X_3,~~~\nabla^{2,\alpha}_{X_3}X_2=\frac{(1-\alpha)L}{2}X_1,~~\nabla^{2,\alpha}_{X_3}X_3=(1-\alpha)LX_1.\nonumber\\
\end{align}
\end{lem}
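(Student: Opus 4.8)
The assertion is purely computational, recording the nine covariant derivatives $\nabla^{2,\alpha}_{X_i}X_j$, so the plan is simply to feed the defining formula (3.3) with the known Levi-Civita data and with the projections adapted to the splitting $T\mathbb{M}=H_2\oplus H_2^\bot$. First I would record the Levi-Civita connection $\nabla_{X_i}X_j$ on $(\mathbb{M},g_L)$. These are exactly the values supplied by Lemma 2.1 of \cite{YS}; equivalently, since $\nabla^{1,0}=\nabla$, they can be read off from Lemma 2.1 of the present paper by setting $\alpha=0$, giving $\nabla_{X_1}X_1=0$, $\nabla_{X_1}X_2=\frac{1}{2}X_3$, $\nabla_{X_1}X_3=-\frac{L}{2}X_2$, $\nabla_{X_2}X_1=-\frac{1}{2}X_3$, $\nabla_{X_2}X_2=0$, $\nabla_{X_2}X_3=\frac{L}{2}X_1$, $\nabla_{X_3}X_1=-\frac{L}{2}X_2-X_3$, $\nabla_{X_3}X_2=\frac{L}{2}X_1$ and $\nabla_{X_3}X_3=LX_1$.

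Next I would make the two projections explicit. Because $X_1,X_2,\widetilde{X_3}=L^{-\frac{1}{2}}X_3$ is a $g_L$-orthonormal frame, $X_1$ is $g_L$-orthogonal to both $X_2$ and $X_3$, so the splitting $H_2={\rm span}\{X_2,X_3\}$, $H_2^\bot={\rm span}\{X_1\}$ is genuinely orthogonal. Hence for $V=aX_1+bX_2+cX_3$ one has $P^2V=bX_2+cX_3$ and $P^{2,\bot}V=aX_1$; in particular $P^2X_1=0$, $P^{2,\bot}X_1=X_1$, while $P^2X_2=X_2$, $P^2X_3=X_3$ and $P^{2,\bot}X_2=P^{2,\bot}X_3=0$.

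With these ingredients the computation is mechanical. For each ordered pair $(i,j)$ I would substitute $P^2X_j$ and $P^{2,\bot}X_j$ into (3.2) to get $\nabla^{2,\alpha,s}_{X_i}X_j=P^2\nabla_{X_i}(P^2X_j)+P^{2,\bot}\nabla_{X_i}(P^{2,\bot}X_j)$, and then combine it with $\nabla_{X_i}X_j$ via $\nabla^{2,\alpha}_{X_i}X_j=(1-\alpha)\nabla_{X_i}X_j+\alpha\nabla^{2,\alpha,s}_{X_i}X_j$. For instance, since $P^2X_1=0$ and $P^{2,\bot}X_1=X_1$, each term $\nabla^{2,\alpha,s}_{X_i}X_1$ reduces to $P^{2,\bot}\nabla_{X_i}X_1$, which vanishes whenever $\nabla_{X_i}X_1\in H_2$ (as happens for $i=2,3$); this forces $\nabla^{2,\alpha}_{X_i}X_1=(1-\alpha)\nabla_{X_i}X_1$ and explains the common factor $1-\alpha$ appearing in $\nabla^{2,\alpha}_{X_2}X_1$ and $\nabla^{2,\alpha}_{X_3}X_1$. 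Dually, any derivative landing in the $X_1$-direction (such as $\nabla_{X_3}X_3=LX_1$) is annihilated by $P^2$, so that $\nabla^{2,\alpha}_{X_3}X_3=(1-\alpha)LX_1$, while a derivative landing entirely inside $H_2$ (such as $\nabla_{X_1}X_2=\frac{1}{2}X_3$) is preserved by $P^2$, giving $\nabla^{2,\alpha}_{X_1}X_2=\frac{1}{2}X_3$ with no $\alpha$-dependence.

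I do not anticipate any genuine obstacle, since no limit or geometric argument enters: the statement follows by linear algebra once the Levi-Civita connection is in hand. The only points requiring care are bookkeeping ones, namely tracking which of the nine Levi-Civita outputs lie in $H_2$ and which in $H_2^\bot$, and not conflating $g_L$-orthonormality (under which $\langle X_3,X_3\rangle_L=L$) with the coordinate frame. Running the remaining cases through the same substitution and collecting terms reproduces precisely the nine identities claimed in the lemma.
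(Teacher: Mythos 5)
Your proposal is correct and follows essentially the same route as the paper, whose entire proof is the one-line observation that Lemma 3.1 follows from the Levi-Civita data of Lemma 2.1 in \cite{YS} (equivalently, Lemma 2.1 of this paper at $\alpha=0$, since $\nabla^{1,0}=\nabla$) substituted into the definitions (3.1)--(3.2); you merely make the mechanical substitution explicit. All nine entries check out, including your structural explanation that a factor $1-\alpha$ appears exactly when $\nabla_{X_i}X_j$ lies in the subbundle complementary to the one containing $X_j$ (so the Schouten--Van Kampen term is killed by the projection), while entries with $\nabla_{X_i}X_j$ in the same subbundle as $X_j$ are $\alpha$-independent.
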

Similar to lemma 2.4, we have
\begin{lem}
Let $\gamma:[a,b]\rightarrow (\mathbb{M},g_L)$ be a Euclidean $C^2$-smooth regular curve in the Riemannian manifold $(\mathbb{M},g_L)$. Then,
\begin{align}
k^{L,\nabla^{2,\alpha}}_{\gamma}&=\Bigg\{\Bigg\{\left[\frac{\ddot{\gamma}_1\gamma_1-(\dot{\gamma}_1)^2}{\gamma_1^2}+L\omega(\dot{\gamma}(t))\frac{(1-\alpha)\dot{\gamma_2}(t)}{\gamma_1}\right]^2+\left[\ddot{\gamma}_3-\frac{(2-\alpha)\dot{\gamma}_1L}{2\gamma_1}\omega(\dot{\gamma}(t)\right]^2\nonumber\\
&+L\left[\frac{d}{dt}\omega(\dot{\gamma}(t))+\frac{\alpha\dot{\gamma}_1\dot{\gamma}_3}{2\gamma_1}-\frac{(1-\alpha)\dot{\gamma}_1}{\gamma_1}\omega(\dot{\gamma}(t))\right]^2\Bigg\}
\cdot\left[\left(\frac{\dot{\gamma}_1}{\gamma_1}\right)^2+\dot{\gamma}_3^2+L(\omega(\dot{\gamma}(t)))^2\right]^{-2}\nonumber\\
&-\Bigg\{\frac{\dot{\gamma}_1}{\gamma_1}\left[\frac{\dot{\gamma}_1\ddot{\gamma}_1-(\dot{\gamma}_1)^2}{\gamma_1^2}+L\omega(\dot{\gamma}(t))\frac{(1-\alpha)\dot{\gamma_2}(t)}{\gamma_1}\right]+\dot{\gamma}_3(t)\left[\ddot{\gamma}_3-\frac{(2-\alpha)\dot{\gamma}_1L}{2\gamma_1}\omega(\dot{\gamma}(t)\right]\nonumber\\
&+L\omega(\dot{\gamma}(t))\left[\frac{d}{dt}\omega(\dot{\gamma}(t))+\frac{\alpha\dot{\gamma}_1\dot{\gamma}_3}{2\gamma_1}-\frac{(1-\alpha)\dot{\gamma}_1}{\gamma_1}\omega(\dot{\gamma}(t))\right]\Bigg\}^2\cdot\left[\left(\frac{\dot{\gamma}_1}{\gamma_1}\right)^2+\dot{\gamma}_3^2+L(\omega(\dot{\gamma}(t)))^2\right]^{-3}\Bigg\}^{\frac{1}{2}}.\nonumber\\
\end{align}
When $\omega(\dot{\gamma}(t))=0$, we have\\
\begin{align}
k^{L,\nabla^{2,\alpha}}_{\gamma}&=\Bigg\{\Bigg\{\left[\frac{\ddot{\gamma}_1\gamma_1-(\dot{\gamma}_1)^2}{\gamma_1^2}\right]^2+\ddot{\gamma}_3^2+L\left[\frac{d}{dt}\omega(\dot{\gamma}(t))+\frac{\alpha\dot{\gamma}_1\dot{\gamma}_3}{2\gamma_1}\right]^2\Bigg\}
\cdot\left[\left(\frac{\dot{\gamma}_1}{\gamma_1}\right)^2+\dot{\gamma}_3^2\right]^{-2}\nonumber\\
&-\Bigg\{\frac{\dot{\gamma}_1}{\gamma_1}\left[\frac{\dot{\gamma}_1\ddot{\gamma}_1-(\dot{\gamma}_1)^2}{\gamma_1^2}+\dot{\gamma}_3\ddot{\gamma}_3\right]\Bigg\}^2\cdot\left[\left(\frac{\dot{\gamma}_1}{\gamma_1}\right)^2+\dot{\gamma}_3^2\right]^{-3}\Bigg\}^{\frac{1}{2}}.\nonumber\\
\end{align}
\end{lem}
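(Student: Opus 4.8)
The plan is to follow the same four-step scheme used to establish Lemma 2.4, replacing the connection coefficients of $\nabla^{1,\alpha}$ (Lemma 2.1) by those of $\nabla^{2,\alpha}$ supplied by Lemma 3.2. First I would expand the velocity vector in the orthonormal frame: by (2.3) one has $\dot{\gamma}(t)=\frac{\dot{\gamma}_1}{\gamma_1}X_1+\dot{\gamma}_3X_2+\omega(\dot{\gamma}(t))X_3$, exactly as in (2.10), since this expansion depends only on the coordinate frame and not on the connection.

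Second, I would compute $\nabla^{2,\alpha}_{\dot{\gamma}}X_i$ for $i=1,2,3$ by expanding $\dot{\gamma}$ and applying bilinearity together with the nine covariant derivatives listed in Lemma 3.2. This produces the analogue of (2.11). The differences from the first-kind case are localized, because the coefficients $\nabla^{2,\alpha}_{X_1}X_2=\tfrac12 X_3$ and $\nabla^{2,\alpha}_{X_1}X_3=-\tfrac{L}{2}X_2$ no longer carry the factor $(1-\alpha)$, whereas those involving the pairs $X_2,X_3$ and $X_3,X_1$ still do.

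Third, I would apply the Leibniz rule to $\nabla^{2,\alpha}_{\dot{\gamma}}\dot{\gamma}$, differentiating the three scalar coefficients $\frac{\dot{\gamma}_1}{\gamma_1}$, $\dot{\gamma}_3$, $\omega(\dot{\gamma}(t))$ and adding the terms from the previous step. Collecting the $X_1$-, $X_2$- and $X_3$-components yields the three bracketed expressions appearing in (3.4). The only genuinely new feature compared with (2.12) shows up in the $X_3$-component: because $\nabla^{2,\alpha}_{X_1}X_2=\tfrac12 X_3$ contributes $\tfrac12\cdot\frac{\dot{\gamma}_1}{\gamma_1}\dot{\gamma}_3$ while $\nabla^{2,\alpha}_{X_3}X_1$ contributes $-\tfrac{1-\alpha}{2}\cdot\frac{\dot{\gamma}_1}{\gamma_1}\dot{\gamma}_3$, their sum leaves behind the extra term $\frac{\alpha\dot{\gamma}_1\dot{\gamma}_3}{2\gamma_1}$ that was absent in Lemma 2.4. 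One also checks, using $\omega(\dot{\gamma}(t))=\frac{\dot{\gamma}_2}{\gamma_1}-\dot{\gamma}_3$, that the $X_1$-component simplifies to $\frac{\ddot{\gamma}_1\gamma_1-\dot{\gamma}_1^2}{\gamma_1^2}+L\omega(\dot{\gamma}(t))\frac{(1-\alpha)\dot{\gamma}_2}{\gamma_1}$, matching (3.4).

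Finally, I would substitute this expression for $\nabla^{2,\alpha}_{\dot{\gamma}}\dot{\gamma}$, together with $||\dot{\gamma}||_L^2=\left(\frac{\dot{\gamma}_1}{\gamma_1}\right)^2+\dot{\gamma}_3^2+L\omega(\dot{\gamma}(t))^2$ and the pairing $\langle\nabla^{2,\alpha}_{\dot{\gamma}}\dot{\gamma},\dot{\gamma}\rangle_L$, into the curvature formula (2.7) (with $\nabla^{1,\alpha}$ replaced by $\nabla^{2,\alpha}$), giving (3.4); setting $\omega(\dot{\gamma}(t))=0$ collapses the cross terms and yields (3.5). The main obstacle is purely computational bookkeeping — correctly tracking which of the coefficients in Lemma 3.2 retain the factor $(1-\alpha)$ and which do not, so that the extra $X_3$-term is neither dropped nor double-counted; there is no conceptual difficulty once Lemma 3.2 is in hand.
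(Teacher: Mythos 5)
Your proposal is correct and is essentially the paper's own proof: expand $\dot{\gamma}$ in the frame as in (2.10), apply the Leibniz rule with the connection coefficients of $\nabla^{2,\alpha}$ (which the paper states in Lemma 3.1, not Lemma 3.2 as you cite) to obtain the expression (3.6) for $\nabla^{2,\alpha}_{\dot{\gamma}}\dot{\gamma}$, and substitute into the curvature definition (2.7). One cosmetic slip in your bookkeeping: the contribution $-\frac{1-\alpha}{2}\frac{\dot{\gamma}_1}{\gamma_1}\dot{\gamma}_3$ to the $X_3$-component comes from $\nabla^{2,\alpha}_{X_2}X_1=-\frac{1-\alpha}{2}X_3$, not from $\nabla^{2,\alpha}_{X_3}X_1$ (the latter instead produces the $-(1-\alpha)\frac{\dot{\gamma}_1}{\gamma_1}\omega(\dot{\gamma}(t))$ term), though your resulting extra term $\frac{\alpha\dot{\gamma}_1\dot{\gamma}_3}{2\gamma_1}$ and the final formulas are right.
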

\begin{proof}
By (2.10) and (3.3), we have
\begin{align}
\nabla^{2,\alpha}_{\dot{\gamma}}\dot{\gamma}&=
\left[\frac{\ddot{\gamma}_1\gamma_1-(\dot{\gamma}_1)^2}{\gamma_1^2}+\frac{(1-\alpha)\dot{\gamma_2}(t)L}{\gamma_1}\omega(\dot{\gamma}(t))\right]X_1+\left[\ddot{\gamma}_3-\frac{(2-\alpha)\dot{\gamma}_1L}{2\gamma_1}\omega(\dot{\gamma}(t)\right]X_2\nonumber\\
&+\left[\frac{d}{dt}\omega(\dot{\gamma}(t))+\frac{\alpha\dot{\gamma}_1\dot{\gamma}_3}{2\gamma_1}-\frac{(1-\alpha)\dot{\gamma}_1}{\gamma_1}\omega(\dot{\gamma}(t))\right]X_3.\nonumber\\
\end{align}
By (2.7), (2.10) and (3.6), we get Lemma 3.2.
\end{proof}
\begin{lem}
Let $\gamma:[a,b]\rightarrow (\mathbb{M},g_L)$ be a Euclidean $C^2$-smooth regular curve in the Riemannian manifold $(\mathbb{M},g_L)$. Then\\
(1)when $\omega(\dot{\gamma}(t))\neq 0$,\\
\begin{equation}
k_{\gamma}^{\infty,\nabla^{2,\alpha}}=\frac{\sqrt{\left(\frac{(1-\alpha)\dot{\gamma}_2}{\gamma_1}\right)^2+\left(\frac{(2-\alpha)\dot{\gamma}_1}{2\gamma_1}\right)^2}}{|\omega(\dot{\gamma}(t))|},
\end{equation}
(2)when $\omega(\dot{\gamma}(t))= 0 ~~~and~~~\frac{d}{dt}(\omega(\dot{\gamma}(t)))+\frac{\alpha\dot{\gamma}_1\dot{\gamma}_3}{2\gamma_1}=0$,\\
\begin{align}
k^{\infty,\nabla^{2,\alpha}}_{\gamma}&=\Bigg\{\left\{\left[\frac{\ddot{\gamma}_1\gamma_1-\dot{\gamma}_1^2}{\gamma_1^2}\right]^2+\dot{\gamma}_3^2\right\}
\cdot\left[\left(\frac{\dot{\gamma}_1}{\gamma_1}\right)^2+\dot{\gamma}_3^2\right]^{-2}\nonumber\\
&-\left[\frac{\dot{\gamma}_1\ddot{\gamma}_1-\dot{\gamma}_1^3}{\gamma_1^3}+\dot{\gamma}_3\ddot{\gamma}_3\right]^2
\cdot\left[\left(\frac{\dot{\gamma}_1}{\gamma_1}\right)^2+\dot{\gamma}_3^2\right]^{-3}\Bigg\}^{\frac{1}{2}},\nonumber\\
\end{align}
(3)when $\omega(\dot{\gamma}(t))= 0 ~~~and~~~\frac{d}{dt}(\omega(\dot{\gamma}(t)))+\frac{\alpha\dot{\gamma}_1\dot{\gamma}_3}{2\gamma_1}\neq0$,\\
\begin{equation}
{\rm lim}_{L\rightarrow +\infty}\frac{k_{\gamma}^{L,\nabla^{2,\alpha}}}{\sqrt{L}}=\frac{|\frac{d}{dt}(\omega(\dot{\gamma}(t)))+\frac{\alpha\dot{\gamma}_1\dot{\gamma}_3}{2\gamma_1}|}{\left(\frac{\dot{\gamma}_1}{\gamma_1}\right)^2+\dot{\gamma}_3^2}.
\end{equation}
\end{lem}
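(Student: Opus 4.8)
The plan is to mirror the proof of Lemma 2.6 essentially verbatim, feeding the second-kind covariant derivative of Lemma 3.2 into the curvature formula (2.7) and then extracting leading-order asymptotics in $L$ via the convention (2.13). Write $\nabla^{2,\alpha}_{\dot\gamma}\dot\gamma = A_1X_1 + A_2X_2 + A_3X_3$ for the three coefficients displayed in (3.6), the decisive one being $A_3 = \frac{d}{dt}\omega(\dot\gamma(t)) + \frac{\alpha\dot\gamma_1\dot\gamma_3}{2\gamma_1} - \frac{(1-\alpha)\dot\gamma_1}{\gamma_1}\omega(\dot\gamma(t))$, which differs from its first-kind counterpart by the extra \emph{drift} term $\frac{\alpha\dot\gamma_1\dot\gamma_3}{2\gamma_1}$. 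Because $X_1,X_2,\widetilde{X_3}=L^{-1/2}X_3$ are $g_L$-orthonormal and $\dot\gamma = \frac{\dot\gamma_1}{\gamma_1}X_1 + \dot\gamma_3X_2 + \omega(\dot\gamma(t))X_3$ by (2.10), I get $||\nabla^{2,\alpha}_{\dot\gamma}\dot\gamma||_L^2 = A_1^2 + A_2^2 + LA_3^2$, $||\dot\gamma||_L^2 = (\frac{\dot\gamma_1}{\gamma_1})^2 + \dot\gamma_3^2 + L\omega(\dot\gamma(t))^2$, and $\langle\nabla^{2,\alpha}_{\dot\gamma}\dot\gamma,\dot\gamma\rangle_L = A_1\frac{\dot\gamma_1}{\gamma_1} + A_2\dot\gamma_3 + LA_3\omega(\dot\gamma(t))$. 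Substituting these into (2.7) reproduces the explicit formulas of Lemma 3.2, and the three limit statements then follow by a case analysis on $\omega(\dot\gamma(t))$.

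For part (1) I would note that when $\omega(\dot\gamma(t))\neq 0$ both $A_1$ and $A_2$ carry the factor $L\omega(\dot\gamma(t))$ and are therefore $O(L)$, while $A_3 = O(1)$; hence $||\nabla^{2,\alpha}_{\dot\gamma}\dot\gamma||_L^2 \sim L^2\omega(\dot\gamma(t))^2[(\frac{(1-\alpha)\dot\gamma_2}{\gamma_1})^2 + (\frac{(2-\alpha)\dot\gamma_1}{2\gamma_1})^2]$ and $||\dot\gamma||_L^4 \sim L^2\omega(\dot\gamma(t))^4$, so the first quotient in (2.7) converges to the square of the right-hand side of (3.7). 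Since the inner product is only $O(L)$ whereas $||\dot\gamma||_L^6 \sim L^3\omega(\dot\gamma(t))^6$, the second quotient is $O(L^{-1})$ and drops out, giving (3.7). For part (2), $\omega(\dot\gamma(t))=0$ reduces $A_1,A_2,A_3$ to their $L$-independent values and the standing hypothesis makes $A_3=0$; every term in (2.7) is then free of $L$ and collapses directly to (3.8). For part (3), again $\omega(\dot\gamma(t))=0$ but now $A_3\neq 0$, so $||\nabla^{2,\alpha}_{\dot\gamma}\dot\gamma||_L^2 \sim LA_3^2$ dominates while $||\dot\gamma||_L^2 = (\frac{\dot\gamma_1}{\gamma_1})^2 + \dot\gamma_3^2$ and the inner product stay $O(1)$; dividing $k^{L,\nabla^{2,\alpha}}_\gamma$ by $\sqrt L$ and letting $L\to +\infty$ isolates $|A_3|/[(\frac{\dot\gamma_1}{\gamma_1})^2 + \dot\gamma_3^2]$, which is exactly (3.9).

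The calculation is routine once (3.6) is in hand, and the only point requiring genuine attention is the accounting of powers of $L$ around the extra term $\frac{\alpha\dot\gamma_1\dot\gamma_3}{2\gamma_1}$ inside $A_3$. Its presence is precisely what shifts the dividing line between the two degenerate regimes from $\frac{d}{dt}\omega(\dot\gamma(t))=0$, as in Lemma 2.6, to $\frac{d}{dt}\omega(\dot\gamma(t)) + \frac{\alpha\dot\gamma_1\dot\gamma_3}{2\gamma_1}=0$, and it must be carried through consistently so that the cross term $LA_3\omega(\dot\gamma(t))$ in the inner product is confirmed to be $O(L)$ rather than $O(L^2)$ in part (1), and so that $LA_3^2$ is the genuine leading term in part (3). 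I expect no real obstruction beyond this bookkeeping, since the orthonormal frame $X_1,X_2,\widetilde{X_3}$ makes all norms and inner products diagonal, and the limits reduce to comparing explicit powers of $L$.
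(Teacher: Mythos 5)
Your proof is correct and takes essentially the same route as the paper: the paper in fact states Lemma 3.3 without proof, implicitly appealing to its proof of Lemma 2.6, and your argument --- substituting the components $A_1,A_2,A_3$ of (3.6) into (2.7), using that the frame $X_1,X_2,\widetilde{X_3}$ is $g_L$-orthonormal so that $\|\nabla^{2,\alpha}_{\dot\gamma}\dot\gamma\|_L^2=A_1^2+A_2^2+LA_3^2$, and then comparing powers of $L$ in the three regimes with the drift term $\frac{\alpha\dot\gamma_1\dot\gamma_3}{2\gamma_1}$ carried through $A_3$ --- is exactly that template. One minor remark: in case (2) your computation actually produces $\ddot\gamma_3^2$ in the first bracket, so the $\dot\gamma_3^2$ appearing in the stated formula (3.8) (and likewise in (2.15) of Lemma 2.6) is a typo in the paper rather than a defect of your derivation.
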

For every $U,V\in T\Sigma$, we define $\nabla^{\Sigma,{2,\alpha}}_UV=\pi \nabla^{2,\alpha}_UV$ where $\pi:T\mathbb{M}\rightarrow T\Sigma$ is the projection. Then $\nabla^{\Sigma,{2,\alpha}}$ is the Levi-Civita connection on $\Sigma$
with respect to the metric $g_L$. By (2.18), (3.6) and
\begin{equation}
\nabla^{\Sigma,{2,\alpha}}_{\dot{\gamma}}\dot{\gamma}=\langle \nabla^{2,\alpha}_{\dot{\gamma}}\dot{\gamma},e_1\rangle_Le_1+\langle \nabla^{2,\alpha}_{\dot{\gamma}}\dot{\gamma},e_2\rangle_Le_2,
\end{equation}
we have
\begin{align}
\nabla^{\Sigma,{2,\alpha}}_{\dot{\gamma}}\dot{\gamma}&=
\Bigg\{\overline{q}\left[\frac{\ddot{\gamma}_1\gamma_1-(\dot{\gamma}_1)^2}{\gamma_1^2}+L\omega(\dot{\gamma}(t))\frac{(1-\alpha)\dot{\gamma_2}(t)}{\gamma_1}\right]-\overline{p}\left[\ddot{\gamma}_3-\frac{(2-\alpha)\dot{\gamma}_1L}{2\gamma_1}\omega(\dot{\gamma}(t)\right]\Bigg\}e_1\nonumber\\
&+\Bigg\{\overline{r_L}~~\overline{p}\left[\frac{\ddot{\gamma}_1\gamma_1-(\dot{\gamma}_1)^2}{\gamma_1^2}+L\omega(\dot{\gamma}(t))\frac{(1-\alpha)\dot{\gamma_2}(t)}{\gamma_1}\right]+\overline{r_L}~~ \overline{q}\left[\ddot{\gamma}_3-\frac{(2-\alpha)\dot{\gamma}_1L}{2\gamma_1}\omega(\dot{\gamma}(t)\right]\nonumber\\
&-\frac{l}{l_L}L^{\frac{1}{2}}\left[\frac{d}{dt}\omega(\dot{\gamma}(t))+\frac{\alpha\dot{\gamma}_1\dot{\gamma}_3}{2\gamma_1}-\frac{(1-\alpha)\dot{\gamma}_1}{\gamma_1}\omega(\dot{\gamma}(t))\right]\Bigg\}e_2.\nonumber\\
\end{align}
Moreover if $\omega(\dot{\gamma}(t))=0$, then
\begin{align}
\nabla^{\Sigma,{2,\alpha}}_{\dot{\gamma}}\dot{\gamma}&=
\Bigg\{\overline{q}\left[\frac{\ddot{\gamma}_1\gamma_1-(\dot{\gamma}_1)^2}{\gamma_1^2}\right]-\overline{p}\ddot{\gamma}_3\Bigg\}e_1\nonumber\\
&+\Bigg\{\overline{r_L}~~\overline{p}\frac{\ddot{\gamma}_1\gamma_1-(\dot{\gamma}_1)^2}{\gamma_1^2}+\overline{r_L}~~ \overline{q}
\ddot{\gamma}_3-\frac{l}{l_L}L^{\frac{1}{2}}\left[\frac{d}{dt}\omega(\dot{\gamma}(t))+\frac{\alpha\dot{\gamma}_1\dot{\gamma}_3}{2\gamma_1}\right]\Bigg\}e_2.\nonumber\\
\end{align}
\begin{lem}
Let $\Sigma\subset(\mathbb{M},g_L)$ be a regular surface.
Let $\gamma:[a,b]\rightarrow \Sigma$ be a Euclidean $C^2$-smooth regular curve. Then\\
(1)when $\omega(\dot{\gamma}(t))\neq 0,$
\begin{equation}
k_{\gamma,\Sigma}^{\infty,\nabla^{2,\alpha}}=\frac{|\overline{p}\frac{(2-\alpha)\dot{\gamma}_1}{2}+\overline{q}(1-\alpha)\dot{\gamma}_2|}{|\gamma_1\omega(\dot{\gamma}(t))|},
\end{equation}
(2)when $\omega(\dot{\gamma}(t))= 0, ~~~and~~~\frac{d}{dt}(\omega(\dot{\gamma}(t)))+\frac{\alpha\dot{\gamma}_1\dot{\gamma}_3}{2\gamma_1}=0,$
\begin{equation}
k_{\gamma,\Sigma}^{\infty,\nabla^{2,\alpha}}=0,\\
\end{equation}
(3)when $\omega(\dot{\gamma}(t))= 0, ~~~and~~~\frac{d}{dt}(\omega(\dot{\gamma}(t)))+\frac{\alpha\dot{\gamma}_1\dot{\gamma}_3}{2\gamma_1}\neq0,$
\begin{equation}
{\rm lim}_{L\rightarrow +\infty}\frac{k_{\gamma,\Sigma}^{L,\nabla^{2,\alpha}}}{\sqrt{L}}=\frac{|\frac{d}{dt}(\omega(\dot{\gamma}(t)))+\frac{\alpha\dot{\gamma}_1\dot{\gamma}_3}{2\gamma_1}|}{\left(\overline{q}\frac{\dot{\gamma}_1}{f}-\overline{p}\dot{\gamma}_3\right)^2}.
\end{equation}
\end{lem}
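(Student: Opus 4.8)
The plan is to follow the proof of Lemma 2.12 essentially verbatim, replacing the expansion (2.21) of $\nabla^{\Sigma,1,\alpha}_{\dot{\gamma}}\dot{\gamma}$ by its analogue (3.11) (and, when $\omega(\dot{\gamma}(t))=0$, by (3.12)). The point that makes this transfer legitimate is that the adapted orthonormal frame $\{e_1,e_2,v_L\}$ of (2.18) is determined by the metric $g_L$ alone and is independent of the choice of connection; consequently the decomposition (2.30) of $\dot{\gamma}$ in the frame $\{e_1,e_2\}$ is unchanged, and I may reuse it directly. Defining $k^{L,\nabla^{2,\alpha}}_{\gamma,\Sigma}$ by the obvious analogue of (2.23), the whole computation reduces to extracting the leading $L$-asymptotics of the three scalars $||\nabla^{\Sigma,2,\alpha}_{\dot{\gamma}}\dot{\gamma}||^2_{\Sigma,L}$, $||\dot{\gamma}||_{\Sigma,L}$ and $\langle \nabla^{\Sigma,2,\alpha}_{\dot{\gamma}}\dot{\gamma},\dot{\gamma}\rangle_{\Sigma,L}$.

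For part (1), with $\omega(\dot{\gamma}(t))\neq 0$, I would read off from (3.11) that the $e_1$-coefficient of $\nabla^{\Sigma,2,\alpha}_{\dot{\gamma}}\dot{\gamma}$ grows like $L\,\omega(\dot{\gamma}(t))\big[\overline{p}\frac{(2-\alpha)\dot{\gamma}_1}{2\gamma_1}+\overline{q}\frac{(1-\alpha)\dot{\gamma}_2}{\gamma_1}\big]$, whereas the $e_2$-coefficient is only $O(L^{1/2})$, since $\overline{r_L}=O(L^{-1/2})$ and $l_L/l=1+O(L^{-1})$. Hence $||\nabla^{\Sigma,2,\alpha}_{\dot{\gamma}}\dot{\gamma}||^2_{\Sigma,L}\sim L^2\,\omega(\dot{\gamma}(t))^2\big[\overline{p}\frac{(2-\alpha)\dot{\gamma}_1}{2\gamma_1}+\overline{q}\frac{(1-\alpha)\dot{\gamma}_2}{\gamma_1}\big]^2$, which is exactly (2.31) but with the $\frac{\alpha\dot{\gamma}_3}{2}$ summand now absent from the $\overline{q}$-bracket. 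Combining this with $||\dot{\gamma}||^2_{\Sigma,L}\sim L\,\omega(\dot{\gamma}(t))^2$ from (2.32) and $\langle\nabla^{\Sigma,2,\alpha}_{\dot{\gamma}}\dot{\gamma},\dot{\gamma}\rangle_{\Sigma,L}=O(L)$, the second term of the curvature formula contributes $O(L^2)/O(L^3)\to 0$, while the first converges to the square of the right-hand side of (3.13) after pulling out the common factor $1/\gamma_1$.

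For parts (2) and (3) I would work from the specialized form (3.12). In case (2) the hypothesis $\frac{d}{dt}\omega(\dot{\gamma}(t))+\frac{\alpha\dot{\gamma}_1\dot{\gamma}_3}{2\gamma_1}=0$ annihilates the only $L^{1/2}$-term in the $e_2$-coefficient, and since $\overline{r_L}\to 0$ the surviving contribution lies purely along $e_1$; writing $A:=\overline{q}\frac{\ddot{\gamma}_1\gamma_1-(\dot{\gamma}_1)^2}{\gamma_1^2}-\overline{p}\ddot{\gamma}_3$ and $B:=\overline{q}\frac{\dot{\gamma}_1}{\gamma_1}-\overline{p}\dot{\gamma}_3$, one obtains $||\nabla^{\Sigma,2,\alpha}_{\dot{\gamma}}\dot{\gamma}||^2_{\Sigma,L}\sim A^2$, $||\dot{\gamma}||^2_{\Sigma,L}=B^2$ and $\langle\nabla^{\Sigma,2,\alpha}_{\dot{\gamma}}\dot{\gamma},\dot{\gamma}\rangle_{\Sigma,L}=AB$, so the two terms of the curvature formula cancel and $k^{\infty,\nabla^{2,\alpha}}_{\gamma,\Sigma}=0$, which is (3.14). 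In case (3) that same term no longer vanishes, so the $e_2$-coefficient is $\sim -L^{1/2}\big[\frac{d}{dt}\omega(\dot{\gamma}(t))+\frac{\alpha\dot{\gamma}_1\dot{\gamma}_3}{2\gamma_1}\big]$, giving $||\nabla^{\Sigma,2,\alpha}_{\dot{\gamma}}\dot{\gamma}||^2_{\Sigma,L}\sim L\big[\frac{d}{dt}\omega(\dot{\gamma}(t))+\frac{\alpha\dot{\gamma}_1\dot{\gamma}_3}{2\gamma_1}\big]^2$ together with $||\dot{\gamma}||^2_{\Sigma,L}=B^2=O(1)$ and $\langle\cdot,\cdot\rangle_{\Sigma,L}=O(1)$; dividing by $\sqrt{L}$ and letting $L\to+\infty$ yields (3.16).

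The algebra here is routine, and the single point demanding genuine care is the exact cancellation in case (2): one must verify that the surviving contributions to $||\nabla^{\Sigma,2,\alpha}_{\dot{\gamma}}\dot{\gamma}||^2_{\Sigma,L}$ and to $\langle\nabla^{\Sigma,2,\alpha}_{\dot{\gamma}}\dot{\gamma},\dot{\gamma}\rangle_{\Sigma,L}$ are genuinely proportional through the common factor $A$, so that the difference of the two terms in the curvature formula vanishes identically rather than merely being $o(1)$. The only other subtlety is correct bookkeeping of the orders $\overline{r_L}=O(L^{-1/2})$ and $l_L/l=1+O(L^{-1})$ in case (1), which is what guarantees that the $e_1$-coefficient dominates the $e_2$-coefficient and thereby isolates the claimed limit.
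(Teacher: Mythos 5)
Your proposal is correct and is precisely the argument the paper intends: Lemma 3.4 is stated there without its own proof, the expansions (3.11)--(3.12) having been derived exactly so that the proof of Lemma 2.12 (the decomposition (2.30) of $\dot{\gamma}$, which as you note depends only on $g_L$ and not on the connection, together with the asymptotics of $||\nabla^{\Sigma,2,\alpha}_{\dot{\gamma}}\dot{\gamma}||^2_{\Sigma,L}$, $||\dot{\gamma}||_{\Sigma,L}$ and $\langle\nabla^{\Sigma,2,\alpha}_{\dot{\gamma}}\dot{\gamma},\dot{\gamma}\rangle_{\Sigma,L}$) carries over verbatim. Your bookkeeping $\overline{r_L}=O(L^{-1/2})$, $l_L/l=1+O(L^{-1})$, and the exact cancellation via the common factor $A$ in case (2), reproduce the paper's computations (2.30)--(2.35) with the $\frac{\alpha\dot{\gamma}_3}{2}$ term replaced according to (3.11)--(3.12), so nothing further is needed.
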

\begin{lem}
Let $\Sigma\subset(\mathbb{M},g_L)$ be a regular surface.
Let $\gamma:[a,b]\rightarrow \Sigma$ be a Euclidean $C^2$-smooth regular curve. Then\\
(1)when $\omega(\dot{\gamma}(t))\neq 0$,
\begin{equation}
k_{\gamma,\Sigma}^{\infty,\nabla^{2,\alpha},s}=\frac{|\overline{p}\frac{(2-\alpha)\dot{\gamma}_1}{2}+\overline{q}(1-\alpha)\dot{\gamma}_2|}{|\gamma_1\omega(\dot{\gamma}(t))|},
\end{equation}
(2)when $\omega(\dot{\gamma}(t))= 0, ~~ ~and~ ~~\frac{d}{dt}(\omega(\dot{\gamma}(t)))+\frac{\alpha\dot{\gamma}_1\dot{\gamma}_3}{2\gamma_1}=0$,
\begin{equation}
k^{\infty,\nabla^{2,\alpha},s}_{\gamma,\Sigma}=0,\\
\end{equation}
(3)when $\omega(\dot{\gamma}(t))= 0, ~~ ~and~ ~~\frac{d}{dt}(\omega(\dot{\gamma}(t)))+\frac{\alpha\dot{\gamma}_1\dot{\gamma}_3}{2\gamma_1}\neq0$,
\begin{equation}
{\rm lim}_{L\rightarrow +\infty}\frac{k_{\gamma,\Sigma}^{L,\nabla^{2,\alpha},s}}{\sqrt{L}}=\frac{(-\overline{q}\frac{\dot{\gamma}_1}{f}+\overline{p}\dot{\gamma}_3)\left[\frac{d}{dt}(\omega(\dot{\gamma}(t)))+\frac{\alpha\dot{\gamma}_1\dot{\gamma}_3}{2\gamma_1}\right]}{|\overline{q}\frac{\dot{\gamma}_1}{f}-\overline{p}\dot{\gamma}_3|^3}.
\end{equation}
\end{lem}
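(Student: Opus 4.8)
The plan is to mirror the proof of Lemma 2.14 line for line, replacing the first-kind surface covariant derivative (2.21) by its second-kind analogue (3.11), and its restriction (3.12) when $\omega(\dot\gamma(t))=0$. The two geometric ingredients that feed into the signed geodesic curvature are connection-independent: the decomposition of $\dot\gamma$ in the frame $\{e_1,e_2\}$ recorded in (2.29) and the action of $J_L$ from (2.19). Hence the expression $J_L(\dot\gamma)=\frac{l_L}{l}L^{\frac12}\omega(\dot\gamma(t))\,e_1+\big(\overline{q}\frac{\dot\gamma_1}{\gamma_1}-\overline{p}\dot\gamma_3\big)e_2$ of (2.40), together with the norms $||\dot\gamma||_{\Sigma,L}\sim L^{\frac12}|\omega(\dot\gamma(t))|$ from (2.31) (for $\omega(\dot\gamma(t))\neq0$) and $||\dot\gamma||_{\Sigma,L}=|\overline{q}\frac{\dot\gamma_1}{\gamma_1}-\overline{p}\dot\gamma_3|$ from (2.34) (for $\omega(\dot\gamma(t))=0$), all carry over verbatim.

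First I would substitute (3.11) and (2.40) into the second-kind analogue of the signed geodesic curvature (2.36), namely $k^{L,\nabla^{2,\alpha},s}_{\gamma,\Sigma}=\langle\nabla^{\Sigma,2,\alpha}_{\dot\gamma}\dot\gamma,J_L(\dot\gamma)\rangle_{\Sigma,L}/||\dot\gamma||^3_{\Sigma,L}$, and expand the pairing as $A_1B_1+A_2B_2$, where $A_1,A_2$ denote the $e_1,e_2$ components of $\nabla^{\Sigma,2,\alpha}_{\dot\gamma}\dot\gamma$ and $B_1=\frac{l_L}{l}L^{\frac12}\omega(\dot\gamma(t))$, $B_2=\overline{q}\frac{\dot\gamma_1}{\gamma_1}-\overline{p}\dot\gamma_3$. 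Then I would read off the leading power of $L$ in each regime. When $\omega(\dot\gamma(t))\neq0$ the dominant term is $A_1B_1$: the $O(L)$ part of $A_1$ (the terms carrying the factor $L\omega(\dot\gamma(t))$) multiplies $B_1=O(L^{\frac12})$, giving $\langle\nabla^{\Sigma,2,\alpha}_{\dot\gamma}\dot\gamma,J_L(\dot\gamma)\rangle_{\Sigma,L}\sim L^{\frac32}\omega(\dot\gamma(t))^2\big[\overline{p}\frac{(2-\alpha)\dot\gamma_1}{2\gamma_1}+\overline{q}\frac{(1-\alpha)\dot\gamma_2}{\gamma_1}\big]$, while $A_2B_2=O(L^{\frac12})$ is negligible; dividing by $||\dot\gamma||^3_{\Sigma,L}\sim L^{\frac32}|\omega(\dot\gamma(t))|^3$ produces (3.16), exactly as (2.41) did for the first kind.

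In the two remaining regimes $\omega(\dot\gamma(t))=0$ forces $B_1=0$, so the pairing collapses to $A_2B_2$ with $B_2=\overline{q}\frac{\dot\gamma_1}{\gamma_1}-\overline{p}\dot\gamma_3=O(1)$, and I would read $A_2$ off (3.12). The structural difference from the first kind is that the $X_3$-component of (3.11) carries the extra term $\frac{\alpha\dot\gamma_1\dot\gamma_3}{2\gamma_1}$, so the quantity that matters is $\frac{d}{dt}\omega(\dot\gamma(t))+\frac{\alpha\dot\gamma_1\dot\gamma_3}{2\gamma_1}$. If it vanishes (case (2)), then since $\overline{r_L}=O(L^{-\frac12})$ every surviving piece of $A_2$ is $O(L^{-\frac12})$, whence $k^{L,\nabla^{2,\alpha},s}_{\gamma,\Sigma}=O(L^{-\frac12})\to0$, giving (3.17); if it is nonzero (case (3)), the term $-\frac{l}{l_L}L^{\frac12}\big[\frac{d}{dt}\omega(\dot\gamma(t))+\frac{\alpha\dot\gamma_1\dot\gamma_3}{2\gamma_1}\big]$ dominates, so $A_2B_2\sim L^{\frac12}\big(-\overline{q}\frac{\dot\gamma_1}{\gamma_1}+\overline{p}\dot\gamma_3\big)\big[\frac{d}{dt}\omega(\dot\gamma(t))+\frac{\alpha\dot\gamma_1\dot\gamma_3}{2\gamma_1}\big]$, and dividing by $||\dot\gamma||^3_{\Sigma,L}=|\overline{q}\frac{\dot\gamma_1}{\gamma_1}-\overline{p}\dot\gamma_3|^3$ gives (3.18).

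The only delicate point, and the place where care is needed, is the order counting in case (1): one must confirm that the $\overline{r_L}$-weighted $O(L)$ pieces of $A_2$ are actually only $O(L^{\frac12})$ (because $\overline{r_L}=O(L^{-\frac12})$) and that the $-\frac{(1-\alpha)\dot\gamma_1}{\gamma_1}\omega(\dot\gamma(t))$ contribution does not reach order $L^{\frac32}$, so that the limit is controlled solely by $A_1B_1$. This is precisely the estimate already performed in (2.41); once the two differences between (2.21) and (3.11) are tracked — the missing $\frac{\alpha\dot\gamma_3}{2}$ in the $X_1$-component and the extra $\frac{\alpha\dot\gamma_1\dot\gamma_3}{2\gamma_1}$ in the $X_3$-component — the computation is routine, and I anticipate no obstacle beyond this bookkeeping.
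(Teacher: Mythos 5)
Your proposal is correct and matches the paper's intended argument exactly: the paper states this lemma without a separate proof precisely because it follows from the proof of Lemma 2.14 with (2.21)/(2.22) replaced by (3.11)/(3.12), which is what you carry out, including the two bookkeeping differences (the missing $\frac{\alpha\dot{\gamma}_3}{2}$ in the $X_1$-component and the extra $\frac{\alpha\dot{\gamma}_1\dot{\gamma}_3}{2\gamma_1}$ in the $X_3$-component) and the order counting $\overline{r_L}=O(L^{-\frac{1}{2}})$ that makes $A_1B_1$ dominate in case (1). Your asymptotics $L^{\frac{3}{2}}$, $O(L^{-\frac{1}{2}})$, and $L^{\frac{1}{2}}$ in the three regimes reproduce the paper's (2.41)--(2.43) verbatim, so nothing is missing.
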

Similarly to Theorem 4.3 in \cite{CDPT}, we have
\begin{thm} The second fundamental form $II^{\nabla^{2,\alpha},L}$ of the
embedding of $\Sigma$ into $(\mathbb{M},g_L)$ is given by
\begin{equation}
II^{\nabla^{2,\alpha},L}=\left(
  \begin{array}{cc}
  h^{2,\alpha}_{11},
    & h^{2,\alpha}_{12} \\
  h^{2,\alpha}_{21} ,
    & h^{2,\alpha}_{22} \\
  \end{array}
\right),
\end{equation}
where
$$h^{2,\alpha}_{11}=\frac{l}{l_L}[X_1(\overline{p})+X_2(\overline{q})]+\frac{\alpha\overline{r_Lpq}\sqrt{L}}{2},$$
    $$h^{2,\alpha}_{12}=-\frac{l_L}{l}\langle e_1,\nabla_H(\overline{r_L})\rangle_L+\frac{(\alpha\overline{p_L}^2-1)\sqrt{L}}{2}+\frac{\alpha\overline{r_L}^2\overline{p}^2\sqrt{L}}{2},$$
    $$h^{2,\alpha}_{21}=-\frac{l_L}{l}\langle e_1,\nabla_H(\overline{r_L})\rangle_L-\frac{(1+\alpha\overline{r_L}\overline{q}^2-\alpha\overline{p_L}^2-\alpha\overline{q_L}^2)\sqrt{L}}{2}+\alpha\overline{q_L}\overline{r_L},$$
    $$h^{2,\alpha}_{22}=-\frac{l^2}{l_L^2}\langle e_2,\nabla_H(\frac{r}{l})\rangle_L+\widetilde{X_3}(\overline{r_L})-(1-\alpha)\overline{p_L}-\frac{\alpha\overline{r_L}(\overline{p_L}\overline{q_L}+\overline{r_L}^2\overline{p}\overline{q})\sqrt{L}}{2}.$$
\end{thm}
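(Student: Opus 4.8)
The plan is to mirror the proof of Theorem 2.16, simply replacing the structure constants of $\nabla^{1,\alpha}$ furnished by Lemma 2.1 with those of $\nabla^{2,\alpha}$ furnished by Lemma 3.1. The entry point is the metric-compatibility identity: since $\nabla^{2,\alpha}$ is a metric connection and $\langle v_L,e_j\rangle_L=0$ holds identically on $\Sigma$, differentiating gives $e_i\langle v_L,e_j\rangle_L=\langle\nabla^{2,\alpha}_{e_i}v_L,e_j\rangle_L+\langle v_L,\nabla^{2,\alpha}_{e_i}e_j\rangle_L=0$, so that each matrix entry may be computed as $h^{2,\alpha}_{ij}=\langle\nabla^{2,\alpha}_{e_i}v_L,e_j\rangle_L=-\langle\nabla^{2,\alpha}_{e_i}e_j,v_L\rangle_L$. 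Thus it suffices to expand the four covariant derivatives $\nabla^{2,\alpha}_{e_i}e_j$ and project them onto $v_L$.

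First I would substitute the frame expressions $e_1=\overline{q}X_1-\overline{p}X_2$ and $e_2=\overline{r_L}\,\overline{p}X_1+\overline{r_L}\,\overline{q}X_2-\frac{l}{l_L}\widetilde{X_3}$ from (2.18) and expand each $\nabla^{2,\alpha}_{e_i}e_j$ by the Leibniz rule: the directional derivatives $e_i$ act on the coefficient functions $\overline{p},\overline{q},\overline{r_L},l/l_L$, while the covariant derivatives of the frame fields $X_a$ are read off directly from Lemma 3.1. The essential bookkeeping device is that $\widetilde{X_3}=L^{-1/2}X_3$, so that $\langle X_3,v_L\rangle_L=\overline{r_L}\sqrt{L}$; hence every $X_3$-component produced by the structure constants contributes a term of order $\sqrt{L}$ to the projection onto $v_L$. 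After collecting the $X_1$- and $X_2$-components one recovers exactly the horizontal-divergence term $\frac{l}{l_L}[X_1(\overline{p})+X_2(\overline{q})]$ and the $\langle e_1,\nabla_H(\overline{r_L})\rangle_L$ and $\langle e_2,\nabla_H(\frac{r}{l})\rangle_L$ expressions of Theorem 2.16, whereas the $X_3$-components furnish the new $\alpha$-dependent corrections.

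The decisive difference from the first-kind computation, and the step I expect to be most delicate, is the asymmetry of Lemma 3.1: because $H_2=\mathrm{span}\{X_2,X_3\}$ rather than $\{X_1,X_2\}$, the derivatives $\nabla^{2,\alpha}_{X_1}X_2=\tfrac{1}{2}X_3$ and $\nabla^{2,\alpha}_{X_1}X_3=-\tfrac{L}{2}X_2$ no longer carry the factor $(1-\alpha)$, while $\nabla^{2,\alpha}_{X_2}X_1$, $\nabla^{2,\alpha}_{X_2}X_3$ and the $\nabla^{2,\alpha}_{X_3}X_a$ still do. Consequently the cancellations that simplified the first-kind entries are only partial, and each $h^{2,\alpha}_{ij}$ acquires extra terms of the shape $\alpha\overline{r_L}(\cdots)\sqrt{L}$; for instance the $X_3$-coefficient of $\nabla^{2,\alpha}_{e_1}e_1$ works out to $-\tfrac{\alpha}{2}\overline{p}\,\overline{q}$, which upon projection yields the new summand $\tfrac{\alpha}{2}\overline{r_L}\,\overline{p}\,\overline{q}\sqrt{L}$ in $h^{2,\alpha}_{11}$. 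The hardest entry is $h^{2,\alpha}_{22}$, where $\nabla^{2,\alpha}_{e_2}e_2$ forces one to differentiate the products $\overline{r_L}\,\overline{p}$ and $\overline{r_L}\,\overline{q}$ along $\widetilde{X_3}$ and to combine the $L$-scaled structure constants $\nabla^{2,\alpha}_{X_a}X_3$ with the $L^{-1/2}$ coming from $\widetilde{X_3}$; keeping track of these mixed powers of $L$ is precisely what produces the final term $-\frac{\alpha\overline{r_L}(\overline{p_L}\,\overline{q_L}+\overline{r_L}^2\overline{p}\,\overline{q})\sqrt{L}}{2}$. Once all four projections are assembled and the relations $\overline{p}^2+\overline{q}^2=1$ together with the definitions tying $l,l_L,\overline{p_L},\overline{q_L},\overline{r_L}$ are used to simplify, the stated matrix follows.
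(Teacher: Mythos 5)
Your proposal is correct and follows exactly the route the paper intends: the paper omits the proof of Theorem 3.7, proving only the first-kind analogue (Theorem 2.16) via the metric-compatibility identity $h_{ij}=-\langle\nabla_{e_i}e_j,v_L\rangle_L$, frame expansion through (2.18), and projection onto $v_L$ with the $\sqrt{L}$-bookkeeping from $\widetilde{X_3}=L^{-1/2}X_3$, which is precisely your plan with Lemma 2.1 replaced by Lemma 3.1. Your spot-check is also right: the $X_3$-coefficient of $\nabla^{2,\alpha}_{e_1}e_1$ is $-\overline{p}\,\overline{q}/2+(1-\alpha)\overline{p}\,\overline{q}/2=-\alpha\overline{p}\,\overline{q}/2$, yielding the extra summand $\frac{\alpha}{2}\overline{r_L}\,\overline{p}\,\overline{q}\sqrt{L}$ in $h^{2,\alpha}_{11}$, matching the statement.
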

\indent The Riemannian mean curvature $\mathcal{H}_{\nabla^{2,\alpha},L}$ of $\Sigma$ is defined by
$$\mathcal{H}_{\nabla^{2,\alpha},L}:={\rm tr}(II^{\nabla^{2,\alpha},L}).$$
Then we have
\begin{prop} Away from characteristic points, the horizontal mean curvature associated to the second kind of deformed Schouten-Van Kampen connection $\nabla^{2,\alpha}$, $\mathcal{H}_{\nabla^{2,\alpha},\infty}$ of $\Sigma\subset\mathbb{M}$ is given by
\begin{equation}
\mathcal{H}_{\nabla^{2,\alpha},\infty}={\rm lim}_{L\rightarrow +\infty}\mathbb{}\mathcal{H}_{\nabla^{2,\alpha},L}=X_1(\overline{p})+X_2(\overline{q})-(1-\alpha)\overline{p}.
\end{equation}
\end{prop}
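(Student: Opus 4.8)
The plan is to follow the strategy of the proof of Proposition 2.18: since $\mathcal{H}_{\nabla^{2,\alpha},L}={\rm tr}(II^{\nabla^{2,\alpha},L})=h^{2,\alpha}_{11}+h^{2,\alpha}_{22}$, I would write out this trace using the entries supplied by Theorem 3.6 and then pass to the limit as $L\rightarrow+\infty$. First I would record the elementary asymptotics of the normalizing quantities: since $r=\widetilde{X_3}u=L^{-\frac{1}{2}}X_3u\rightarrow 0$, one has $l_L\rightarrow l$, hence $\frac{l}{l_L}\rightarrow 1$, $\overline{p_L}\rightarrow\overline{p}$, $\overline{q_L}\rightarrow\overline{q}$ and $\overline{r_L}\rightarrow 0$, together with the exact identities $\overline{p_L}=\frac{l}{l_L}\overline{p}$, $\overline{q_L}=\frac{l}{l_L}\overline{q}$ and $1-\overline{r_L}^2=\frac{l^2}{l_L^2}$.

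The crux, and the only place where the limit could fail to exist, is the pair of terms carrying an explicit factor $\sqrt{L}$, namely $\frac{\alpha\overline{r_L}\overline{p}\overline{q}\sqrt{L}}{2}$ coming from $h^{2,\alpha}_{11}$ and $-\frac{\alpha\overline{r_L}(\overline{p_L}\overline{q_L}+\overline{r_L}^2\overline{p}\overline{q})\sqrt{L}}{2}$ coming from $h^{2,\alpha}_{22}$. Because $\overline{r_L}\sqrt{L}=\frac{X_3u}{l_L}=O(1)$, neither term vanishes on its own, so everything hinges on their cancellation. I would combine them into
$$\frac{\alpha\overline{r_L}\sqrt{L}}{2}\left[\overline{p}\overline{q}(1-\overline{r_L}^2)-\overline{p_L}\overline{q_L}\right]$$
and then substitute $\overline{p_L}\overline{q_L}=\overline{p}\overline{q}\frac{l^2}{l_L^2}$ together with $1-\overline{r_L}^2=\frac{l^2}{l_L^2}$ to see that the bracket is identically zero. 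Verifying this exact cancellation is the main (if short) obstacle, and it is what lets the Schouten-Van Kampen deformation produce a finite sub-Riemannian mean curvature.

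With the $\sqrt{L}$ contributions removed, the remaining limits are routine and parallel Proposition 2.18. The term $\frac{l^2}{l_L^2}\langle e_2,\nabla_H(\frac{r}{l})\rangle_L$ is $O(L^{-1})$, since $\nabla_H(\frac{r}{l})=L^{-\frac{1}{2}}\nabla_H(\frac{X_3u}{l})$ while the horizontal part of $e_2$ is itself $O(L^{-\frac{1}{2}})$; likewise $\widetilde{X_3}(\overline{r_L})\rightarrow 0$. Combining these with $\frac{l}{l_L}[X_1(\overline{p})+X_2(\overline{q})]\rightarrow X_1(\overline{p})+X_2(\overline{q})$ and $\overline{p_L}\rightarrow\overline{p}$ then gives
$$\mathcal{H}_{\nabla^{2,\alpha},\infty}=X_1(\overline{p})+X_2(\overline{q})-(1-\alpha)\overline{p},$$
which is precisely (3.20). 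I would remark that this coincides with $\mathcal{H}_{\nabla^{1,\alpha},\infty}$ from Proposition 2.18, as one expects once the distinguishing $\sqrt{L}$ terms have cancelled.
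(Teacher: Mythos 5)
Your proposal is correct and follows exactly the route the paper intends for this proposition, namely the analogue of the proof of Proposition 2.18: take the trace $h^{2,\alpha}_{11}+h^{2,\alpha}_{22}$ from Theorem 3.6 and pass to the limit using $\frac{l}{l_L}\rightarrow 1$, $\overline{p_L}\rightarrow\overline{p}$, $\widetilde{X_3}(\overline{r_L})=O(L^{-1})$ and $\frac{l^2}{l_L^2}\langle e_2,\nabla_H(\frac{r}{l})\rangle_L=O(L^{-1})$. Your verification of the exact cancellation of the two $\sqrt{L}$-terms via $\overline{p_L}\overline{q_L}=\overline{p}\overline{q}\frac{l^2}{l_L^2}$ and $1-\overline{r_L}^2=\frac{l^2}{l_L^2}$ is correct (and indeed necessary, since $\overline{r_L}\sqrt{L}=\frac{X_3u}{l_L}=O(1)$); the paper leaves this step implicit, so you have supplied the one detail its proof omits.
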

By Lemma 3.1 and (2.54), we have
\begin{lem}
Let $\mathbb{M}$ be the affine group, then
\begin{align}
&
R^{2,\alpha}(X_1,X_2)X_1=\frac{3(1-\alpha)L}{4}X_2+(1-\alpha)X_3,~~~ R^{2,\alpha}(X_1,X_2)X_2=-\frac{3(1-\alpha)L}{4}X_1,\nonumber\\
&R^{2,\alpha}(X_1,X_2)X_3=-(1-\alpha)LX_1,~~~
R^{2,\alpha}(X_1,X_3)X_1=(1-\alpha)LX_2+\frac{(1-\alpha)(4-L)}{4}X_3,\nonumber\\ &R^{2,\alpha}(X_1,X_3)X_2=-(1-\alpha)LX_1,~~~
R^{2,\alpha}(X_1,X_3)X_3=\frac{(1-\alpha)(L^2-4L)}{4}X_1,\nonumber\\
&R^{2,\alpha}(X_2,X_3)X_1=0,~~~
R^{2,\alpha}(X_2,X_3)X_2=-\frac{(1-\alpha)^2L}{4}X_3,~~~ R^{2,\alpha}(X_2,X_3)X_3=\frac{(1-\alpha)^2L^2}{4}X_2.\nonumber\\
\end{align}
\end{lem}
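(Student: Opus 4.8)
The plan is to compute $R^{2,\alpha}$ directly from its defining formula, the second-kind analogue of (2.54),
\[
R^{2,\alpha}(X,Y)Z=\nabla^{2,\alpha}_X\nabla^{2,\alpha}_YZ-\nabla^{2,\alpha}_Y\nabla^{2,\alpha}_XZ-\nabla^{2,\alpha}_{[X,Y]}Z,
\]
feeding in the connection coefficients of Lemma 3.1 and the structure constants of the frame recorded in (2.2). Since the definition is visibly antisymmetric in its first two arguments, I only need to treat the three pairs $(X_1,X_2)$, $(X_1,X_3)$, $(X_2,X_3)$ and apply each to $X_1,X_2,X_3$, i.e. nine vector-valued expressions in total.

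For each expression I would expand the two iterated covariant derivatives by applying Lemma 3.1 twice and then subtract the bracket term. The crucial point is that $\mathbb{M}$ is nonabelian: the brackets $[X_1,X_2]$ and $[X_1,X_3]$ are nonzero, so for those pairs the summand $\nabla^{2,\alpha}_{[X,Y]}Z$ is a genuine, nonvanishing contribution that must again be read off from Lemma 3.1, whereas for $(X_2,X_3)$ we have $[X_2,X_3]=0$ and that term drops, simplifying the last three entries. As a sample, for $R^{2,\alpha}(X_1,X_2)X_1$ one gets $\nabla^{2,\alpha}_{X_1}\nabla^{2,\alpha}_{X_2}X_1=-\tfrac{1-\alpha}{2}\nabla^{2,\alpha}_{X_1}X_3=\tfrac{(1-\alpha)L}{4}X_2$, $\nabla^{2,\alpha}_{X_2}\nabla^{2,\alpha}_{X_1}X_1=0$, and $\nabla^{2,\alpha}_{[X_1,X_2]}X_1=\nabla^{2,\alpha}_{X_3}X_1=-\tfrac{(1-\alpha)L}{2}X_2-(1-\alpha)X_3$, which combine to $\tfrac{3(1-\alpha)L}{4}X_2+(1-\alpha)X_3$, matching the asserted value.

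Because $\nabla^{2,\alpha}$ is a metric connection for $g_L$ and $X_1,X_2,\widetilde{X_3}=L^{-1/2}X_3$ form a $g_L$-orthonormal frame, I would exploit the skew-symmetry $\langle R^{2,\alpha}(X_i,X_j)X_k,X_l\rangle_L=-\langle R^{2,\alpha}(X_i,X_j)X_l,X_k\rangle_L$ both to halve the bookkeeping and as an internal consistency check; in particular each $R^{2,\alpha}(X_i,X_j)X_k$ must have no component along $X_k$, which is indeed the case in every line of the claimed formulas. This is exactly the same scheme used for $\nabla^{1,\alpha}$ in Lemma 2.17, so the two computations can be run in parallel.

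The statement involves no conceptual difficulty; the only real hazard is combinatorial. Since $\nabla^{2,\alpha}$ carries torsion (it is metric but not the Levi-Civita connection), I cannot invoke the first Bianchi identity or the pair-exchange symmetry of the Riemannian curvature to shortcut the nine computations, and every nonzero bracket must be tracked carefully through the iterated derivatives. Keeping the powers of $L$ and the factors of $(1-\alpha)$ and $(1-\alpha)^2$ straight across the three pairs is where slips are most likely, and the metric-compatibility cross-check is the safeguard I would lean on.
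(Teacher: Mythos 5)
Your scheme is the same as the paper's (which offers nothing beyond ``By Lemma 3.1 and (2.54)'', i.e.\ a direct frame computation), and your sample entry $R^{2,\alpha}(X_1,X_2)X_1$ is correct. But there is a concrete gap: you feed in ``the structure constants recorded in (2.2)'', and (2.2) is inconsistent with the frame (2.1). Computing directly from (2.1), $[X_1,X_3]=[x_1\partial_{x_1},x_1\partial_{x_2}]=x_1\partial_{x_1}(x_1)\partial_{x_2}=X_3$, not $X_2$; equivalently, Lemma 2.1 at $\alpha=0$ must be the (torsion-free) Levi-Civita connection, and it gives $\nabla_{X_1}X_3-\nabla_{X_3}X_1=-\frac{L}{2}X_2+\frac{L}{2}X_2+X_3=X_3$, which forces $[X_1,X_3]=X_3$ (the printed $[X_1,X_3]=X_2$ is the bracket of the $E(1,1)$ frame of Section 4, copied erroneously). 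If you execute your plan with $[X_1,X_3]=X_2$ as printed, the three entries for the pair $(X_1,X_3)$ come out wrong: the bracket term is then $\nabla^{2,\alpha}_{X_2}X_1=-\frac{1-\alpha}{2}X_3$, yielding $R^{2,\alpha}(X_1,X_3)X_1=\frac{(1-\alpha)L}{2}X_2+\frac{(1-\alpha)(2-L)}{4}X_3$, and similarly $R^{2,\alpha}(X_1,X_3)X_2=-\frac{(1-\alpha)L}{2}X_1$ and $R^{2,\alpha}(X_1,X_3)X_3=\frac{(1-\alpha)(L^2-2L)}{4}X_1$, none of which agree with the lemma. With the correct bracket, so that the subtracted term is $\nabla^{2,\alpha}_{X_3}X_1=-\frac{(1-\alpha)L}{2}X_2-(1-\alpha)X_3$ (and $\nabla^{2,\alpha}_{X_3}X_2$, $\nabla^{2,\alpha}_{X_3}X_3$ for the other two slots), all nine stated entries are reproduced exactly; the pairs $(X_1,X_2)$ and $(X_2,X_3)$ are unaffected, which is why your single worked sample did not expose the problem.

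A second point: the metric-compatibility safeguard you lean on cannot detect this error. On a frame whose pairwise inner products are constant, each of the three summands $\nabla^{2,\alpha}_X\nabla^{2,\alpha}_YZ$, $\nabla^{2,\alpha}_Y\nabla^{2,\alpha}_XZ$, $\nabla^{2,\alpha}_WZ$ is \emph{separately} skew in the last two slots, for any fixed $W$ substituted in place of $[X,Y]$; so the wrong formulas above still satisfy $\langle R(X_i,X_j)X_k,X_l\rangle_L=-\langle R(X_i,X_j)X_l,X_k\rangle_L$ (for instance $\langle R(X_1,X_3)X_1,X_3\rangle_L=\frac{(1-\alpha)(2-L)L}{4}=-\langle R(X_1,X_3)X_3,X_1\rangle_L$ holds for the incorrect values too). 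To close the gap you must verify the brackets directly from the coordinate expressions (2.1) (or from torsion-freeness of the $\alpha=0$ connection) rather than trust (2.2) or rely on the skew-symmetry check.
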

\begin{prop} Away from characteristic points, we have
\begin{equation}
\mathcal{K}^{\Sigma,\nabla^{2,\alpha}}(e_1,e_2)\rightarrow\frac{\alpha^2\overline{p}^2L}{2}+ B_1+O(L^{-1}),~~{\rm as}~~L\rightarrow +\infty,
\end{equation}
where
\begin{align}
B_1&:=-\frac{(2+\alpha\overline{q}^2)}{2}\langle e_1,\nabla_H(\frac{X_3u}{|\nabla_Hu|})\rangle-(1-\alpha)\overline{q}^2+\frac{\alpha\overline{p}^2(1+\alpha)+3\alpha-3}{4}\left(\frac{X_3u}{|\nabla_Hu|}\right)^2\nonumber\\
&+\frac{4\overline{q}^2-5\alpha\overline{q}+\alpha^2\overline{q}\overline{p}^2}{2}\frac{X_3u}{|\nabla_Hu|}-\left[(1-\alpha)\overline{p}+\frac{\alpha\overline{p}\overline{q}}{2}\frac{X_3u}{|\nabla_Hu|}\right]\cdot\left[X_1(\overline{p})+X_2(\overline{q})+\frac{\alpha\overline{p}\overline{q}}{2}\frac{X_3u}{|\nabla_Hu|}\right].
\end{align}
\end{prop}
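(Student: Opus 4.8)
The plan is to mirror the proof of Proposition 2.18, replacing every first-kind object by its second-kind counterpart and invoking the Gauss equation in the form
$$\mathcal{K}^{\Sigma,\nabla^{2,\alpha}}(e_1,e_2)=\mathcal{K}^{\nabla^{2,\alpha}}(e_1,e_2)+{\rm det}(II^{\nabla^{2,\alpha},L}),$$
the exact analogue of (2.56). Thus the quantity to be expanded as $L\to+\infty$ splits into the ambient term $\mathcal{K}^{\nabla^{2,\alpha}}(e_1,e_2)=-\langle R^{2,\alpha}(e_1,e_2)e_1,e_2\rangle_L$, governed by Lemma 3.6, and the extrinsic term ${\rm det}(II^{\nabla^{2,\alpha},L})=h^{2,\alpha}_{11}h^{2,\alpha}_{22}-h^{2,\alpha}_{12}h^{2,\alpha}_{21}$, with the entries $h^{2,\alpha}_{ij}$ read off from Theorem 3.5.

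First I would expand $\langle R^{2,\alpha}(e_1,e_2)e_1,e_2\rangle_L$ through (2.18), using $e_1=\overline{q}X_1-\overline{p}X_2$, $e_2=\overline{r_L}\,\overline{p}X_1+\overline{r_L}\,\overline{q}X_2-\frac{l}{l_L}\widetilde{X_3}$ and $\widetilde{X_3}=L^{-1/2}X_3$. Exactly the same six brackets that appear in (2.61) arise; substituting the values from Lemma 3.6 together with the limits $\overline{p_L}\to\overline{p}$, $\overline{q_L}\to\overline{q}$, $\frac{l}{l_L}\to1$, $L^{1/2}\overline{r_L}\to\frac{X_3u}{|\nabla_Hu|}$ and $L\,\overline{r_L}^2\to\left(\frac{X_3u}{|\nabla_Hu|}\right)^2$ yields $\mathcal{K}^{\nabla^{2,\alpha}}(e_1,e_2)$ as an expansion in the powers $L,\ L^{1/2},\ 1,\ L^{-1/2}$. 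The decisive difference from the first kind is that the bracket $\langle R^{2,\alpha}(X_2,X_3)X_2,X_3\rangle_L$ now carries a factor $(1-\alpha)^2$ rather than $(1-\alpha)$, which alters the coefficient of the leading $L$-term.

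Second I would compute ${\rm det}(II^{\nabla^{2,\alpha},L})$ from Theorem 3.5. The feature absent in the first-kind case is that both off-diagonal entries $h^{2,\alpha}_{12}$ and $h^{2,\alpha}_{21}$ carry a term of order $\sqrt{L}$, so their product generates a term of order $L$, while the diagonal product $h^{2,\alpha}_{11}h^{2,\alpha}_{22}$ stays bounded and contributes only at order $1$. I would substitute $\nabla_H(\overline{r_L})=L^{-1/2}\nabla_H\!\left(\frac{X_3u}{|\nabla_Hu|}\right)+O(L^{-1})$ exactly as in the proof of Proposition 2.18, and expand each entry to the constant order, carefully retaining the $\sqrt{L}$-coefficients that feed the $L$-order part of the determinant.

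Finally I would add the two expansions power by power. The main obstacle is the bookkeeping of the divergent part: the half-integer powers $L^{1/2}$ produced by $\mathcal{K}^{\nabla^{2,\alpha}}$ and by ${\rm det}(II^{\nabla^{2,\alpha},L})$ must be shown to cancel, and then—unlike the first-kind situation, where the $L$-order terms cancel completely—one must verify that the surviving $L$-order term equals $\frac{\alpha^2\overline{p}^2}{2}L$, which is the analytic signature of the torsion-induced asymmetry $h^{2,\alpha}_{12}\neq h^{2,\alpha}_{21}$. Once this genuinely divergent term is isolated, the remaining $O(1)$ contributions must be collected and reorganized; the delicate point is recognizing the constant part of $h^{2,\alpha}_{11}h^{2,\alpha}_{22}$ as the cross product $-\left[(1-\alpha)\overline{p}+\frac{\alpha\overline{p}\overline{q}}{2}\frac{X_3u}{|\nabla_Hu|}\right]\left[X_1(\overline{p})+X_2(\overline{q})+\frac{\alpha\overline{p}\overline{q}}{2}\frac{X_3u}{|\nabla_Hu|}\right]$, after which the identification of the constant $B_1$ and of the $O(L^{-1})$ remainder is immediate.
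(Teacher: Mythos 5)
Your proposal follows exactly the route the paper takes: it states Proposition 3.9 without a separate argument, relying on the same scheme as its proof of Proposition 2.18 --- the Gauss equation $\mathcal{K}^{\Sigma,\nabla^{2,\alpha}}=\mathcal{K}^{\nabla^{2,\alpha}}+{\rm det}(II^{\nabla^{2,\alpha},L})$, the six-bracket expansion of $\langle R^{2,\alpha}(e_1,e_2)e_1,e_2\rangle_L$ via the curvature table, and the determinant of the second fundamental form with the substitution $\nabla_H(\overline{r_L})=L^{-1/2}\nabla_H\bigl(\frac{X_3u}{|\nabla_Hu|}\bigr)+O(L^{-1})$ --- and you correctly identify the structural novelty of the second-kind case, namely that the order-$L$ parts no longer cancel and that the constant part of $h^{2,\alpha}_{11}h^{2,\alpha}_{22}$ yields the cross-product term in $B_1$. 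The only blemishes are citational (the curvature table is Lemma 3.8 and the second fundamental form is Theorem 3.6, not ``Lemma 3.6'' and ``Theorem 3.5''), plus the minor point that terms such as $\overline{q_L}\,\overline{r_L}L^{1/2}$ are already $O(1)$ since $\overline{r_L}=O(L^{-1/2})$, so the ambient curvature contributes no genuinely divergent half-integer powers to be cancelled.
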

\begin{thm}
 Let $\Sigma\subset (\mathbb{M},g_L)$
  be a regular surface with finitely many boundary components $(\partial\Sigma)_i,$ $i\in\{1,\cdots,n\}$, given by Euclidean $C^2$-smooth regular and closed curves $\gamma_i:[0,2\pi]\rightarrow (\partial\Sigma)_i$. Suppose that the characteristic set $C(\Sigma)$ satisfies $\mathcal{H}^1(C(\Sigma))=0$ where $\mathcal{H}^1(C(\Sigma))$ denotes the Euclidean $1$-dimensional Hausdorff measure of $C(\Sigma)$ and that
$||\nabla_Hu||_H^{-1}$ is locally summable with respect to the Euclidean $2$-dimensional Hausdorff measure
near the characteristic set $C(\Sigma)$, then
\begin{equation}
\int_{\Sigma}\frac{\alpha\overline{p}^2}{2}d\sigma_\Sigma=0,
\end{equation}
\begin{equation}
-\int_{\Sigma}\frac{\alpha\overline{p}^2}{2}d\overline{\sigma_\Sigma}+\int_{\Sigma}B_1d\sigma_\Sigma+\sum_{i=1}^n\int_{\gamma_i}k^{\infty,s}_{\gamma_i,\Sigma}d\overline{s}=0.
\end{equation}
\begin{proof}
Using the discussions in \cite{BTV1}, we know that the number of points satisfying
 $\omega(\dot{\gamma_i}(t))= 0$ and $\frac{d}{dt}(\omega(\dot{\gamma_i}(t)))\neq 0$ on $\gamma_i$ is finite. Since our proof of Theorem 3.10 is based on
 an approximation argument relying on the Lebesgue dominated convergence theorem. In the application of this theorem a set of finite many points can be ignored
 as a null set.
 Then by Lemma 3.5, we have
\begin{equation}
k^{L,\nabla^{2,\alpha},s}_{\gamma_i,\Sigma}=k^{\infty,\nabla^{2,\alpha},s}_{\gamma_i,\Sigma}+O(L^{-\frac{1}{2}}).
\end{equation}
We assume firstly that $C(\Sigma)$ is empty set.
By the Gauss-Bonnet theorem, we have
\begin{equation}
\int_{\Sigma}\mathcal{K}^{\Sigma,\nabla^{2,\alpha},L}\frac{1}{\sqrt{L}}d\sigma_{\Sigma,L}+\sum_{i=1}^n\int_{\gamma_i}k^{L,\nabla^{2,\alpha},s}_{\gamma_i,\Sigma}\frac{1}{\sqrt{L}}d{s}_L=2\pi\frac{\chi(\Sigma)}{\sqrt{L}}.
\end{equation}
So by (2.66), (2.67), (3.24) and (3.25), we get
\begin{equation}
-\left(\int_{\Sigma}\frac{\alpha\overline{p}^2}{2}d\sigma_\Sigma\right)L+\left(-\int_{\Sigma}\frac{\alpha\overline{p}^2}{2}d\overline{\sigma_\Sigma}+\int_{\Sigma}B_1d\sigma_\Sigma+\sum_{i=1}^n\int_{\gamma_i}k^{\infty,\nabla^{2,\alpha},s}_{\gamma_i,\Sigma}d\overline{s}\right)
+O(L^{-\frac{1}{2}})
=2\pi\frac{\chi(\Sigma)}{\sqrt{L}}.
\end{equation}
We multiply (3.28) by a factor $\frac{1}{L}$ and let $L$ go to the infinity and using the dominated convergence theorem, then we get (3.24). Using (3.24) and (3.28), we get (3.25).
Using the similar discussions of the page 27 in \cite{BTV}, we can relax the condition that the characteristic set $C(\Sigma)$ is the empty set and only suppose that the characteristic set $C(\Sigma)$ satisfies $\mathcal{H}^1(C(\Sigma))=0$
and that
$||\nabla_Hu||_H^{-1}$ is locally summable with respect to the Euclidean $2$-dimensional Hausdorff measure
near the characteristic set $C(\Sigma)$.\\
\end{proof}
\end{thm}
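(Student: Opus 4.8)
The plan is to derive both identities from a single application of the Gauss--Bonnet theorem for metric connections (Theorem 2.20) to $\nabla^{2,\alpha}$ on $(\Sigma,g_L)$, followed by a two--term asymptotic analysis in $L$. The feature that forces this more delicate treatment, and that distinguishes the present case from the first kind of connection, is Proposition 3.9: the intrinsic Gaussian curvature $\mathcal{K}^{\Sigma,\nabla^{2,\alpha}}(e_1,e_2)$ does not converge as $L\to+\infty$ but grows linearly, with leading term proportional to $\overline{p}^2L$. Hence one cannot simply pass to the limit in the Gauss--Bonnet formula as in Theorem 2.21; instead the formula must be read as an asymptotic identity in $L$ whose coefficients are $L$--independent integrals over $\Sigma$ and $\partial\Sigma$.

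First I would write the Riemannian Gauss--Bonnet theorem for $(\Sigma,g_L)$, divided by $\sqrt{L}$, as in (3.26). Into it I would substitute three expansions: the curvature expansion of Proposition 3.9, the signed geodesic curvature expansion of Lemma 3.5 (which away from the degenerate points gives (3.27), so that $k^{L,\nabla^{2,\alpha},s}_{\gamma_i,\Sigma}=k^{\infty,\nabla^{2,\alpha},s}_{\gamma_i,\Sigma}+O(L^{-1/2})$), and the sub--leading expansions of the measures, namely $\tfrac{1}{\sqrt{L}}ds_L=ds+d\overline{s}\,L^{-1}+O(L^{-2})$ from Lemma 2.19 together with the analogous refinement $\tfrac{1}{\sqrt{L}}d\sigma_{\Sigma,L}=d\sigma_\Sigma+d\overline{\sigma_\Sigma}\,L^{-1}+O(L^{-2})$ of (2.68). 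Collecting terms by their order in $L$ then yields the two--scale identity (3.28), in which the coefficient of $L$ equals $-\int_\Sigma\frac{\alpha\overline{p}^2}{2}d\sigma_\Sigma$ and the coefficient of $L^0$ is the left--hand side of (3.25).

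To separate the two orders I would use that the right--hand side of (3.28) equals $2\pi\chi(\Sigma)/\sqrt{L}\to0$ while the left--hand side is a genuine asymptotic series with $L$--independent coefficients; therefore the coefficient of each nonnegative power of $L$ must vanish. Dividing (3.28) by $L$ and letting $L\to+\infty$ forces the coefficient of $L$ to be zero, which is the first identity (3.24). Feeding (3.24) back into (3.28) cancels the divergent term, and letting $L\to+\infty$ in the remaining relation makes the $O(L^0)$ coefficient vanish, giving the second identity (3.25).

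The main obstacle I expect lies in justifying the passage from these pointwise asymptotics to the integral statements. Because the surface integrand itself grows like $L$, the curvature and measure expansions must be controlled uniformly enough to integrate term by term and to absorb the $O(L^{-1/2})$ remainder under the integral sign; this is where the explicit forms of $B_1$ and of the correction $d\overline{\sigma_\Sigma}$ are needed. In addition one must deal with the degenerate points on each $\gamma_i$ where $\omega(\dot\gamma_i)=0$ but $\frac{d}{dt}\omega(\dot\gamma_i)\neq0$: by Lemma 3.5(3) the signed geodesic curvature blows up like $\sqrt{L}$ there, but these points are isolated, hence a null set for the $dt$--integration, and may be discarded (as in \cite{BTV1}). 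Finally, I would first establish (3.24) and (3.25) under the simplifying assumption $C(\Sigma)=\emptyset$, and then remove it exactly as in \cite{BTV} and \cite{BTV1}, invoking $\mathcal{H}^1(C(\Sigma))=0$, the local summability of $||\nabla_Hu||_H^{-1}$, and the Lebesgue dominated convergence theorem.
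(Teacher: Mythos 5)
Your proposal is correct and follows essentially the same route as the paper: apply the Riemannian Gauss--Bonnet theorem for the metric connection $\nabla^{2,\alpha}$, substitute the expansions from Proposition 3.9, Lemma 3.5 and Lemma 2.19 (together with the second-order refinement of (2.68)) to obtain the two-scale identity (3.28), extract the coefficient of $L$ by dividing and letting $L\to+\infty$ to get (3.24), feed it back to get (3.25), and handle the degenerate boundary points and the characteristic set exactly as in \cite{BTV1} and \cite{BTV}. Your explicit remark that the surface-measure expansion must be refined to $d\sigma_\Sigma+d\overline{\sigma_\Sigma}L^{-1}$ is a point the paper leaves implicit, and it is indeed needed for the $O(L^0)$ term in (3.25).
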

\section{Gauss-Bonnet theorems associated to the first kind of deformed Schouten-Van Kampen connection in the group of rigid motions of the Minkowski plane}
\indent We consider the group of rigid motions of the Minkowski plane $E(1,1)$, a unimodular Lie group with a natural subriemannian structure. As a model of $E(1,1),$ we choose the underlying manifold $\mathbb{R}^3$.
  On $\mathbb{R}^3$, we let
\begin{equation}
X_1=\partial_{x_3}, ~~X_2=\frac{1}{\sqrt{2}}(-e^{x_3}\partial_{x_1}+e^{-x_3}\partial_{x_2}),~~X_3=-\frac{1}{\sqrt{2}}(e^{x_3}\partial_{x_1}+e^{-x_3}\partial_{x_2}),
\end{equation}
with brackets
\begin{equation}
[X_1,X_2]=X_3,~~[X_1,X_3]=X_2,~~[X_2,X_3]=0.
\end{equation}
Then
\begin{equation}
\partial_{x_1}=-\frac{\sqrt{2}}{2}e^{-x_3}(X_2+X_3), ~~\partial_{x_2}=\frac{\sqrt{2}}{2}e^{x_3}(X_2-X_3),~~\partial_{x_3}=X_1,
\end{equation}
and ${\rm span}\{X_1,X_2,X_3\}=TE(1,1)$.
$\omega_1=dx_3,~~\omega_2=\frac{1}{\sqrt{2}}(-e^{-x_3}dx_1+e^{x_3}dx_2)
,~~\omega=-\frac{1}{\sqrt{2}}(e^{-x_3}dx_1+e^{x_3}dx_2)$. For the constant $L>0$, let
$g_L=\omega_1\otimes \omega_1+\omega_2\otimes \omega_2+L\omega\otimes \omega$ be the Riemannian metric on $E(1,1)$. Then $X_1,X_2,\widetilde{X_3}:=L^{-\frac{1}{2}}X_3$ are orthonormal basis on $TE(1,1)$ with respect to $g_L$.\\
\indent Let $H^1={\rm span}\{X_1,X_2\}$ be the first kind of horizontal distribution on $E(1,1)$, then $H^{1,\bot}={\rm span}\{X_3\}$.
Nextly, we define the first kind of deformed Schouten-Van Kampen connection which is a metric connection in the group of rigid motions of the Minkowski plane:
\begin{align}
\nabla^{1,\beta}_XY&=(1-\beta)\nabla_XY+\beta\nabla^{1,\beta,s}_XY\nonumber\\
&=(1-\beta)\nabla_XY+\beta P^1\nabla_X{P^1Y}+\beta P^{1,\bot}\nabla_X{P^{1,\bot} Y},\nonumber\\
\end{align}
where $\beta$ is a constant and $P^1$(resp.$P^{1,\bot}$) be the projection on $H^1$ (resp.$H^{1,\bot}$).\\
 By lemma 5.1 in \cite{YS} and (4.4), we have the following lemma
\begin{lem}
Let $E(1,1)$ be the group of rigid motions of the Minkowski plane, then
\begin{align}
&\nabla^{1,\beta}_{X_1}X_1=0,~~~\nabla^{1,\beta}_{X_1}X_2=\frac{(1-\beta)(L-1)}{2L}X_3,~~~ \nabla^{1,\beta}_{X_1}X_3=-\frac{(1-L)(1-\beta)}{2}X_2,\nonumber\\
&\nabla^{1,\beta}_{X_2}X_1=\frac{(1-\beta)(-L-1)}{2L}X_3,~~~\nabla^{1,\beta}_{X_2}X_2=0,~~~\nabla^{1,\beta}_{X_2}X_3=\frac{(1-\beta)(L+1)}{2}X_1,\nonumber\\
&\nabla^{1,\beta}_{X_3}X_1=-\frac{L+1}{2}X_2,~~~\nabla^{1,\beta}_{X_3}X_2=\frac{L+1}{2}X_1,~~\nabla^{1,\beta}_{X_3}X_3=0.\nonumber\\
\end{align}
\end{lem}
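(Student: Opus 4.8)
The plan is to reduce the whole computation to the Levi-Civita connection $\nabla$ on $(E(1,1),g_L)$ together with the defining identity (4.4), in complete analogy with the derivation of Lemma 2.1 for the affine group. The only external input needed is the Levi-Civita data $\nabla_{X_i}X_j$ recorded in Lemma 5.1 of \cite{YS}; expressed in the left-invariant frame $\{X_1,X_2,X_3\}$ these nine vectors are exactly what I will feed into (4.4). Since the deformed connection is obtained from $\nabla$ by a fixed algebraic recipe, no new geometric information is required beyond this table.

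The observation that turns the problem into pure bookkeeping is that both projections in (4.4) act diagonally on the frame: because $H^1={\rm span}\{X_1,X_2\}$ and $H^{1,\bot}={\rm span}\{X_3\}$ are $g_L$-orthogonal, one has $P^1X_1=X_1$, $P^1X_2=X_2$, $P^1X_3=0$ and $P^{1,\bot}X_3=X_3$, $P^{1,\bot}X_1=P^{1,\bot}X_2=0$. Consequently (4.4) collapses for every ordered pair: if $j\in\{1,2\}$ then $\nabla^{1,\beta}_{X_i}X_j=(1-\beta)\nabla_{X_i}X_j+\beta P^1\nabla_{X_i}X_j$, while if $j=3$ then $\nabla^{1,\beta}_{X_i}X_3=(1-\beta)\nabla_{X_i}X_3+\beta P^{1,\bot}\nabla_{X_i}X_3$.

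I would then read off each entry from the qualitative shape of the Levi-Civita table. In this frame $\nabla_{X_1}X_2$ and $\nabla_{X_2}X_1$ point along $X_3$, i.e. lie in $H^{1,\bot}$; the four mixed derivatives $\nabla_{X_1}X_3,\nabla_{X_2}X_3,\nabla_{X_3}X_1,\nabla_{X_3}X_2$ lie in $H^1$; and the three diagonal terms vanish. Comparing this with the surviving projection produces exactly two regimes. When $\nabla_{X_i}X_j$ lies in the \emph{same} distribution that the retained projection selects, the two contributions recombine as $(1-\beta)\nabla_{X_i}X_j+\beta\nabla_{X_i}X_j=\nabla_{X_i}X_j$ and the deformation parameter cancels, which is what leaves $\nabla^{1,\beta}_{X_3}X_1$ and $\nabla^{1,\beta}_{X_3}X_2$ untouched. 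When $\nabla_{X_i}X_j$ lies in the \emph{complementary} distribution, the retained projection annihilates the $\beta$-term and only the overall factor $(1-\beta)$ survives, which is what produces the $X_3$-entries $\nabla^{1,\beta}_{X_1}X_2,\nabla^{1,\beta}_{X_2}X_1$ and the $X_1,X_2$-entries $\nabla^{1,\beta}_{X_1}X_3,\nabla^{1,\beta}_{X_2}X_3$, each carrying the prefactor $(1-\beta)$.

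I do not expect any genuine obstacle; the computation is entirely mechanical once the table from \cite{YS} is in hand. The one point requiring attention is consistent normalization: the orthonormal vector is $\widetilde{X_3}=L^{-1/2}X_3$, so converting between $\widetilde{X_3}$ and $X_3$ introduces the factors $L^{\pm1/2}$ responsible for the asymmetry between the $\frac{1}{2L}$-coefficients in the $X_3$-entries and the $\frac12$-coefficients in the $X_1,X_2$-entries. As a safeguard against sign slips I would verify the input table against the torsion-free identity $\nabla_{X_i}X_j-\nabla_{X_j}X_i=[X_i,X_j]$ using the brackets (4.2) before inserting the projections, which fixes all relative signs of the Levi-Civita connection unambiguously.
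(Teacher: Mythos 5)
Your proposal is correct and takes exactly the paper's (one-line) route: the authors likewise obtain Lemma 4.1 by inserting the Levi-Civita table of Lemma 5.1 in \cite{YS} into the definition (4.4), and your observation that the projections act diagonally on the frame---so the $\beta$-terms either recombine and cancel the parameter (the $\nabla^{1,\beta}_{X_3}X_1$, $\nabla^{1,\beta}_{X_3}X_2$ entries) or are annihilated leaving the overall factor $(1-\beta)$---is precisely the mechanism at work. Incidentally, your proposed torsion-free cross-check would flag a sign typo in the printed statement: the recipe yields $\nabla^{1,\beta}_{X_1}X_3=\frac{(1-L)(1-\beta)}{2}X_2$, the value actually used later in (4.9), rather than $-\frac{(1-L)(1-\beta)}{2}X_2$.
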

\begin{lem}
Let $\gamma:[a,b]\rightarrow (E(1,1),g_L)$ be a Euclidean $C^2$-smooth regular curve in the Riemannian manifold $(E(1,1),g_L)$. Then,
\begin{align}
k^{L,\nabla^{1,\beta}}_{\gamma}&=\Bigg\{\Bigg\{\left[\ddot{\gamma}_3+\frac{\sqrt{2}(L+1)}{4}\omega(\dot{\gamma}(t))(2-\beta)\left(-e^{-\gamma_3}\dot{\gamma_1}+e^{\gamma_3}\dot{\gamma_2}\right)\right]^2\nonumber\\
&+\left[\frac{\sqrt{2}}{2}(\ddot{\gamma}_2e^{\gamma_3}+\dot{\gamma}_2\dot{\gamma}_3e^{\gamma_3}-\ddot{\gamma}_1e^{-\gamma_3}+\dot{\gamma}_1\dot{\gamma}_3e^{-\gamma_3})+\frac{\beta L-\beta-2L}{2}\omega(\dot{\gamma}(t))\dot{\gamma}_3\right]^2\nonumber\\
&+L\left[\frac{d}{dt}(\omega(\dot{\gamma}(t)))-\frac{\sqrt{2}(1-\beta)}{2L}\left(-e^{-\gamma_3}\dot{\gamma_1}+e^{\gamma_3}\dot{\gamma_2}\right)\dot{\gamma}_3\right]^2\Bigg\}
\cdot\left[\dot{\gamma}_3^2+\frac{1}{2}\left(-e^{-\gamma_3}\dot{\gamma_1}+e^{\gamma_3}\dot{\gamma_2}\right)+L(\omega(\dot{\gamma}(t)))^2\right]^{-2}\nonumber\\
&-\Bigg\{\dot{\gamma}_3\left[\ddot{\gamma}_3+\frac{\sqrt{2}(L+1)}{4}\omega(\dot{\gamma}(t))(2-\beta)\left(-e^{-\gamma_3}\dot{\gamma_1}+e^{\gamma_3}\dot{\gamma_2}\right)\right]+\frac{\sqrt{2}}{2}\left(-e^{-\gamma_3}\dot{\gamma_1}+e^{\gamma_3}\dot{\gamma_2}\right)\nonumber\\
&\left[\frac{\sqrt{2}}{2}(\ddot{\gamma}_2e^{\gamma_3}+\dot{\gamma}_2\dot{\gamma}_3e^{\gamma_3}-\ddot{\gamma}_1e^{-\gamma_3}+\dot{\gamma}_1\dot{\gamma}_3e^{-\gamma_3})+\frac{\beta L-\beta-2L}{2}\omega(\dot{\gamma}(t))\dot{\gamma}_3\right]\nonumber\\
&+L\omega(\dot{\gamma}(t))\left[\frac{d}{dt}(\omega(\dot{\gamma}(t)))-\frac{\sqrt{2}(1-\beta)}{2L}\left(-e^{-\gamma_3}\dot{\gamma_1}+e^{\gamma_3}\dot{\gamma_2}\right)\dot{\gamma}_3\right]\Bigg\}^2\nonumber\\
&\cdot\left[\dot{\gamma}_3^2+\frac{1}{2}\left(-e^{-\gamma_3}\dot{\gamma_1}+e^{\gamma_3}\dot{\gamma_2}\right)+L(\omega(\dot{\gamma}(t)))^2\right]^{-3}\Bigg\}^{\frac{1}{2}}.\nonumber\\
\end{align}
When $\omega(\dot{\gamma}(t))=0$, we have\\
\begin{align}
k^{L,\nabla^{1,\beta}}_{\gamma}&=\Bigg\{\Bigg\{\ddot{\gamma}_3^2+\frac{1}{2}(\ddot{\gamma}_2e^{\gamma_3}+\dot{\gamma}_2\dot{\gamma}_3e^{\gamma_3}-\ddot{\gamma}_1e^{-\gamma_3}+\dot{\gamma}_1\dot{\gamma}_3e^{-\gamma_3})^2\nonumber\\
&+L\left[\frac{d}{dt}(\omega(\dot{\gamma}(t)))-\frac{\sqrt{2}(1-\beta)}{2L}\left(-e^{-\gamma_3}\dot{\gamma_1}+e^{\gamma_3}\dot{\gamma_2}\right)\dot{\gamma}_3\right]^2\Bigg\}\cdot\left[\dot{\gamma}_3^2+\frac{1}{2}\left(-e^{-\gamma_3}\dot{\gamma_1}+e^{\gamma_3}\dot{\gamma_2}\right)\right]^{-2}\nonumber\\
&-\Bigg\{\dot{\gamma}_3\ddot{\gamma}_3+\frac{1}{2}\left(-e^{-\gamma_3}\dot{\gamma_1}+e^{\gamma_3}\dot{\gamma_2}\right)(\ddot{\gamma}_2e^{\gamma_3}+\dot{\gamma}_2\dot{\gamma}_3e^{\gamma_3}-\ddot{\gamma}_1e^{-\gamma_3}+\dot{\gamma}_1\dot{\gamma}_3e^{-\gamma_3})\Bigg\}\nonumber\\
&\cdot\left[\dot{\gamma}_3^2+\frac{1}{2}\left(-e^{-\gamma_3}\dot{\gamma_1}+e^{\gamma_3}\dot{\gamma_2}\right)^2\right]^{-3}\Bigg\}^{\frac{1}{2}}.\nonumber\\
\end{align}
\end{lem}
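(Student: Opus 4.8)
The plan is to reproduce the computation of Lemma 2.4 step for step, substituting the structural data of $E(1,1)$ recorded in Lemma 4.1 for that of the affine group. Using the relations (4.3) that express the coordinate frame in terms of $X_1,X_2,X_3$, the velocity $\dot{\gamma}=\dot{\gamma}_1\partial_{x_1}+\dot{\gamma}_2\partial_{x_2}+\dot{\gamma}_3\partial_{x_3}$ becomes
\[
\dot{\gamma}(t)=\dot{\gamma}_3X_1+\frac{\sqrt{2}}{2}\left(-e^{-\gamma_3}\dot{\gamma}_1+e^{\gamma_3}\dot{\gamma}_2\right)X_2+\omega(\dot{\gamma}(t))X_3,
\]
where $\omega(\dot{\gamma}(t))=-\frac{1}{\sqrt{2}}(e^{-\gamma_3}\dot{\gamma}_1+e^{\gamma_3}\dot{\gamma}_2)$ is precisely the $X_3$-coefficient. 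This is the $E(1,1)$ analogue of (2.10), and it isolates the three coefficient functions that propagate through the rest of the argument.

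Next I would compute $\nabla^{1,\beta}_{\dot{\gamma}}X_i$ for $i=1,2,3$ by expanding $\nabla^{1,\beta}_{\dot{\gamma}}X_i=\dot{\gamma}_3\nabla^{1,\beta}_{X_1}X_i+\frac{\sqrt{2}}{2}(-e^{-\gamma_3}\dot{\gamma}_1+e^{\gamma_3}\dot{\gamma}_2)\nabla^{1,\beta}_{X_2}X_i+\omega(\dot{\gamma}(t))\nabla^{1,\beta}_{X_3}X_i$ and reading off the nine Christoffel-type terms from Lemma 4.1. Differentiating the coefficients in the expression for $\dot{\gamma}$ along $t$ and applying the Leibniz rule then assembles $\nabla^{1,\beta}_{\dot{\gamma}}\dot{\gamma}$ as a combination of $X_1,X_2,X_3$, the analogue of (2.12). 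The $X_1$-component collects $\ddot{\gamma}_3$ together with connection terms that combine (via $\nabla^{1,\beta}_{X_3}X_2$ and $\nabla^{1,\beta}_{X_2}X_3$) into $\frac{\sqrt{2}(L+1)(2-\beta)}{4}\omega(\dot{\gamma}(t))(-e^{-\gamma_3}\dot{\gamma}_1+e^{\gamma_3}\dot{\gamma}_2)$; the $X_2$-component collects the $t$-derivative $\frac{\sqrt{2}}{2}(\ddot{\gamma}_2e^{\gamma_3}+\dot{\gamma}_2\dot{\gamma}_3e^{\gamma_3}-\ddot{\gamma}_1e^{-\gamma_3}+\dot{\gamma}_1\dot{\gamma}_3e^{-\gamma_3})$ of the $X_2$-coefficient together with the connection contribution $\frac{\beta L-\beta-2L}{2}\omega(\dot{\gamma}(t))\dot{\gamma}_3$; and the $X_3$-component produces $\frac{d}{dt}\omega(\dot{\gamma}(t))-\frac{\sqrt{2}(1-\beta)}{2L}(-e^{-\gamma_3}\dot{\gamma}_1+e^{\gamma_3}\dot{\gamma}_2)\dot{\gamma}_3$.

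Finally I would substitute into the curvature definition, the $E(1,1)$ analogue of (2.7),
\[
k^{L,\nabla^{1,\beta}}_{\gamma}=\sqrt{\frac{||\nabla^{1,\beta}_{\dot{\gamma}}\dot{\gamma}||_L^2}{||\dot{\gamma}||^4_L}-\frac{\langle \nabla^{1,\beta}_{\dot{\gamma}}\dot{\gamma},\dot{\gamma}\rangle^2_L}{||\dot{\gamma}||^6_L}},
\]
using that $\{X_1,X_2,\widetilde{X_3}\}$ is $g_L$-orthonormal, so that $||pX_1+qX_2+rX_3||_L^2=p^2+q^2+Lr^2$ and hence $||\dot{\gamma}||_L^2=\dot{\gamma}_3^2+\frac{1}{2}(-e^{-\gamma_3}\dot{\gamma}_1+e^{\gamma_3}\dot{\gamma}_2)^2+L\omega(\dot{\gamma}(t))^2$. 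Expanding $||\nabla^{1,\beta}_{\dot{\gamma}}\dot{\gamma}||_L^2$ (with the $X_3$-component weighted by $L$) and $\langle\nabla^{1,\beta}_{\dot{\gamma}}\dot{\gamma},\dot{\gamma}\rangle_L$ produces the first displayed formula, and imposing $\omega(\dot{\gamma}(t))=0$ kills the curvature-induced terms to give the second. The main obstacle is purely the bookkeeping: unlike the affine case, the frame coefficients carry $e^{\pm\gamma_3}$ factors, so that the $t$-derivative of the $X_2$-coefficient generates the extra $\dot{\gamma}_2\dot{\gamma}_3e^{\gamma_3}$ and $\dot{\gamma}_1\dot{\gamma}_3e^{-\gamma_3}$ terms, and one must carry the $\beta$- and $L$-dependent combinations such as $\frac{(L+1)(2-\beta)}{4}$ and $\frac{\beta L-\beta-2L}{2}$ correctly through the quadratic expansion of the two squared norms and the inner product.
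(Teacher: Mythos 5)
Your proposal is correct and follows essentially the same route as the paper: express $\dot{\gamma}$ in the frame $\{X_1,X_2,X_3\}$ via (4.3) (the paper's (4.8)), compute $\nabla^{1,\beta}_{\dot{\gamma}}X_i$ from Lemma 4.1 (the paper's (4.9)), assemble $\nabla^{1,\beta}_{\dot{\gamma}}\dot{\gamma}$ by the Leibniz rule (the paper's (4.10)), and substitute into the curvature formula (2.7) using the $g_L$-orthonormality of $X_1,X_2,\widetilde{X_3}$. Your component-by-component bookkeeping, including the combinations $\frac{\sqrt{2}(L+1)(2-\beta)}{4}$, $\frac{\beta L-\beta-2L}{2}$, and $-\frac{\sqrt{2}(1-\beta)}{2L}$, matches the paper's computation exactly.
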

\begin{proof}
By (4.3), we have
\begin{equation}
\dot{\gamma}(t)=\dot{\gamma}_3X_1+\frac{\sqrt{2}}{2}\left(-e^{-\gamma_3}\dot{\gamma_1}+e^{\gamma_3}\dot{\gamma_2}\right)X_2+\omega(\dot{\gamma}(t))X_3,
\end{equation}
where $\omega(\dot{\gamma}(t))=-\frac{\sqrt{2}}{2}\left(e^{-\gamma_3}\dot{\gamma_1}+e^{\gamma_3}\dot{\gamma_2}\right).$
By Lemma 4.1 and (4.8), we have
\begin{align}
&\nabla^{1,\beta}_{\dot{\gamma}}X_1=-\frac{L+1}{2}\omega(\dot{\gamma}(t))X_2-\frac{\sqrt{2}(L+1)(1-\beta)}{4L}\left(-e^{-\gamma_3}\dot{\gamma_1}+e^{\gamma_3}\dot{\gamma_2}\right)X_3
,\nonumber\\
&\nabla^{1,\beta}_{\dot{\gamma}}X_2=\frac{L+1}{2}\omega(\dot{\gamma}(t))X_1+\frac{(L-1)(1-\beta)}{2L}\dot{\gamma}_3X_3
,\nonumber\\
&\nabla^{1,\beta}_{\dot{\gamma}}X_3=\frac{\sqrt{2}(L+1)(1-\beta)}{4}\left(-e^{-\gamma_3}\dot{\gamma_1}+e^{\gamma_3}\dot{\gamma_2}\right)X_1+\frac{(1-L)(1-\beta)}{2}\dot{\gamma}_3X_2.
\nonumber\\
\end{align}
By (4.8) and (4.9), we have
\begin{align}
\nabla^{1,\beta}_{\dot{\gamma}}\dot{\gamma}&=
\left[\ddot{\gamma}_3+\frac{\sqrt{2}(L+1)(2-\beta)}{4}\left(-e^{-\gamma_3}\dot{\gamma_1}+e^{\gamma_3}\dot{\gamma_2}\right)\omega(\dot{\gamma}(t))\right]X_1\nonumber\\
&+\left[\frac{\sqrt{2}}{2}(\ddot{\gamma}_2e^{\gamma_3}+\dot{\gamma}_2\dot{\gamma}_3e^{\gamma_3}-\ddot{\gamma}_1e^{-\gamma_3}+\dot{\gamma}_1\dot{\gamma}_3e^{-\gamma_3})+\frac{\beta L-\beta-2L}{2}\omega(\dot{\gamma}(t))\dot{\gamma}_3\right]X_2
\nonumber\\
&+\left[\frac{d}{dt}(\omega(\dot{\gamma}(t)))-\frac{\sqrt{2}(1-\beta)}{2L}\left(-e^{-\gamma_3}\dot{\gamma_1}+e^{\gamma_3}\dot{\gamma_2}\right)\dot{\gamma}_3\right]X_3.\nonumber\\
\end{align}
By (4.8) and (4.10), we get Lemma 4.2.
\end{proof}
\begin{lem}
Let $\gamma:[a,b]\rightarrow (E(1,1),g_L)$ be a Euclidean $C^2$-smooth regular curve in the Riemannian manifold $(E(1,1),g_L)$. Then\\
(1)when $\omega(\dot{\gamma}(t))\neq 0$,\\
\begin{equation}
k_{\gamma}^{\infty,\nabla^{1,\beta}}=\frac{\sqrt{\frac{1}{8}\left[(2-\beta)\left(-e^{-\gamma_3}\dot{\gamma_1}+e^{\gamma_3}\dot{\gamma_2}\right)\right]^2+\frac{(2-\beta)^2\dot{\gamma}_3^2}{4}}}{|\omega(\dot{\gamma}(t))|},
\end{equation}
(2)when $\omega(\dot{\gamma}(t))= 0 ~~~and~~~\frac{d}{dt}(\omega(\dot{\gamma}(t)))=0$,\\
\begin{align}
k^{\infty,\nabla^{1,\beta}}_{\gamma}&=\Bigg\{\left\{\dot{\gamma}_3^2+\frac{1}{2}(\ddot{\gamma}_2e^{\gamma_3}+\dot{\gamma}_2\dot{\gamma}_3e^{\gamma_3}-\ddot{\gamma}_1e^{-\gamma_3}+\dot{\gamma}_1\dot{\gamma}_3e^{-\gamma_3})^2\right\}
\cdot\left[\dot{\gamma}_3^2+\frac{1}{2}\left(-e^{-\gamma_3}\dot{\gamma_1}+e^{\gamma_3}\dot{\gamma_2}\right)^2\right]^{-2}\nonumber\\
&-\Bigg\{\dot{\gamma_3}\ddot{\gamma}_3+\frac{1}{2}\left(-e^{-\gamma_3}\dot{\gamma_1}+e^{\gamma_3}\dot{\gamma_2}\right)\left(\ddot{\gamma}_2e^{\gamma_3}+\dot{\gamma}_2\dot{\gamma}_3e^{\gamma_3}-\ddot{\gamma}_1e^{-\gamma_3}+\dot{\gamma}_1\dot{\gamma}_3e^{-\gamma_3}\right)\Bigg\}^2\nonumber\\
&\cdot\left[\dot{\gamma}_3^2+\frac{1}{2}\left(-e^{-\gamma_3}\dot{\gamma_1}+e^{\gamma_3}\dot{\gamma_2}\right)^2\right]^{-3}\Bigg\}^{\frac{1}{2}},\nonumber\\
\end{align}
(3)when $\omega(\dot{\gamma}(t))= 0 ~~~and~~~\frac{d}{dt}(\omega(\dot{\gamma}(t)))\neq0$,\\
\begin{equation}
{\rm lim}_{L\rightarrow +\infty}\frac{k_{\gamma}^{L,\nabla^{1,\beta}}}{\sqrt{L}}=\frac{|\frac{d}{dt}(\omega(\dot{\gamma}(t)))|}{\dot{\gamma}_3^2+\frac{1}{2}\left(-e^{-\gamma_3}\dot{\gamma_1}+e^{\gamma_3}\dot{\gamma_2}\right)^2}.
\end{equation}
\end{lem}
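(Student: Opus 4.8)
The plan is to mirror the proof of Lemma 2.6. I would start from the orthonormal decomposition (4.8) of $\dot{\gamma}$ and the explicit expansion (4.10) of $\nabla^{1,\beta}_{\dot{\gamma}}\dot{\gamma}$ in the basis $\{X_1,X_2,X_3\}$, substitute both into the curvature formula (2.7), and then analyze the limit $L\to+\infty$ in the three regimes separately, using the asymptotic relation $\sim$ introduced in (2.13). The one structural point to keep in mind throughout is that $\widetilde{X_3}=L^{-\frac{1}{2}}X_3$ is the $g_L$-unit vector, so the $X_3$-component of any vector contributes its square weighted by $L$ to $||\cdot||_L^2$; this weighting is exactly what makes the powers of $L$ differ across the three cases.

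When $\omega(\dot{\gamma}(t))\neq 0$, I would isolate the leading-order term of each component of (4.10). The $X_1$-component behaves like $\frac{\sqrt{2}(2-\beta)}{4}L\left(-e^{-\gamma_3}\dot{\gamma}_1+e^{\gamma_3}\dot{\gamma}_2\right)\omega(\dot{\gamma}(t))$ and the $X_2$-component like $-\frac{(2-\beta)}{2}L\,\omega(\dot{\gamma}(t))\dot{\gamma}_3$, while the $X_3$-component stays $O(1)$ and hence contributes only $O(L)$ to the squared norm. Therefore
\begin{equation}
||\nabla^{1,\beta}_{\dot{\gamma}}\dot{\gamma}||_L^2\sim L^2\omega(\dot{\gamma}(t))^2\left[\frac{(2-\beta)^2}{8}\left(-e^{-\gamma_3}\dot{\gamma}_1+e^{\gamma_3}\dot{\gamma}_2\right)^2+\frac{(2-\beta)^2\dot{\gamma}_3^2}{4}\right],
\end{equation}
and $||\dot{\gamma}||_L^2\sim L\,\omega(\dot{\gamma}(t))^2$ gives $||\dot{\gamma}||_L^4\sim L^2\omega(\dot{\gamma}(t))^4$, so the first term under the root in (2.7) converges to the claimed quotient. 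Since $\langle\nabla^{1,\beta}_{\dot{\gamma}}\dot{\gamma},\dot{\gamma}\rangle_L^2=O(L^2)$ against $||\dot{\gamma}||_L^6\sim L^3\omega(\dot{\gamma}(t))^6$, the second term is $O(L^{-1})$ and drops out, yielding (4.11).

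For case (2), the hypothesis $\omega(\dot{\gamma}(t))=0$ kills every $L$-weighted term in (4.10), and $\frac{d}{dt}\omega(\dot{\gamma}(t))=0$ reduces the $X_3$-component to $O(L^{-1})$, so its $L$-weighted square is $O(L^{-1})\to0$; only the $X_1,X_2$ components survive and (2.7) collapses to the purely Euclidean expression (4.12). For case (3), the same vanishing leaves the $X_3$-component equal to $\frac{d}{dt}\omega(\dot{\gamma}(t))\neq0$ to leading order, so $||\nabla^{1,\beta}_{\dot{\gamma}}\dot{\gamma}||_L^2\sim L\left[\frac{d}{dt}\omega(\dot{\gamma}(t))\right]^2$, while $||\dot{\gamma}||_L^2=\dot{\gamma}_3^2+\frac{1}{2}\left(-e^{-\gamma_3}\dot{\gamma}_1+e^{\gamma_3}\dot{\gamma}_2\right)^2$ is $L$-independent and $\langle\nabla^{1,\beta}_{\dot{\gamma}}\dot{\gamma},\dot{\gamma}\rangle_L^2=O(1)$; dividing by $L$ inside the root produces (4.13).

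The only genuinely delicate step is case (1): I must check that the $L^2$-coefficients coming from the $X_1$- and $X_2$-components combine exactly into the stated numerator, and, more importantly, that the cross term $\langle\nabla^{1,\beta}_{\dot{\gamma}}\dot{\gamma},\dot{\gamma}\rangle_L^2$ is genuinely $O(L^2)$ rather than $O(L^3)$ — that is, that the $L^2\omega(\dot{\gamma}(t))^2$ pieces of $\nabla^{1,\beta}_{\dot{\gamma}}\dot{\gamma}$ are, up to lower order, $g_L$-orthogonal to $\dot{\gamma}$, so no $L^3$ contribution appears to spoil the vanishing of the second term under the root. Everything else is routine leading-order bookkeeping via Taylor expansion of $||\dot{\gamma}||_L$.
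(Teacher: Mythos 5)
Your proposal is correct and is exactly the paper's (implicit) argument: the paper states Lemma 4.3 without proof as the direct analogue of Lemma 2.6, whose proof consists precisely of substituting the frame decomposition (4.8) and the expansion (4.10) into the curvature formula (2.7) and comparing leading powers of $L$ in the three regimes, which is what you do, with the correct leading coefficients $\frac{\sqrt{2}(2-\beta)}{4}L$ and $-\frac{(2-\beta)}{2}L$ in the $X_1$- and $X_2$-components. The one step you flag as delicate is in fact automatic: in the $g_L$-orthonormal frame $\{X_1,X_2,L^{-\frac{1}{2}}X_3\}$ each of the three summands of $\langle\nabla^{1,\beta}_{\dot{\gamma}}\dot{\gamma},\dot{\gamma}\rangle_L$ is at most $O(L)$ (the $X_3$-term contributes $L\,\omega(\dot{\gamma}(t))\frac{d}{dt}\omega(\dot{\gamma}(t))$), so the square is $O(L^2)$ with no orthogonality or cancellation argument required, exactly as asserted without comment in the proof of Lemma 2.6.
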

For every $U,V\in T\Sigma$, we define $\nabla^{\Sigma,{1,\beta}}_UV=\pi \nabla^{1,\beta}_UV$ where $\pi:TE(1,1)\rightarrow T\Sigma$ is the projection. Then $\nabla^{\Sigma,{1,\beta}}$ is the Levi-Civita connection on $\Sigma$
with respect to the metric $g_L$. By (2.18),(4.10) and
\begin{equation}
\nabla^{\Sigma,{1,\beta}}_{\dot{\gamma}}\dot{\gamma}=\langle \nabla^{1,\beta}_{\dot{\gamma}}\dot{\gamma},e_1\rangle_Le_1+\langle \nabla^{1,\beta}_{\dot{\gamma}}\dot{\gamma},e_2\rangle_Le_2,
\end{equation}
we have
\begin{align}
\nabla^{\Sigma,{1,\beta}}_{\dot{\gamma}}\dot{\gamma}&=
\Bigg\{\overline{q}\left[\ddot{\gamma}_3+\frac{\sqrt{2}(L+1)(2-\beta)}{4}\left(-e^{-\gamma_3}\dot{\gamma_1}+e^{\gamma_3}\dot{\gamma_2}\right)\omega(\dot{\gamma}(t))\right]\nonumber\\
&-\overline{p}\left[\frac{\sqrt{2}}{2}(\ddot{\gamma}_2e^{\gamma_3}+\dot{\gamma}_2\dot{\gamma}_3e^{\gamma_3}-\ddot{\gamma}_1e^{-\gamma_3}+\dot{\gamma}_1\dot{\gamma}_3e^{-\gamma_3})+\frac{\beta L-\beta-2L}{2}\omega(\dot{\gamma}(t))\dot{\gamma}_3\right]\Bigg\}e_1\nonumber\\
&+\Bigg\{\overline{r_L}~~\overline{p}\left[\ddot{\gamma}_3+\frac{\sqrt{2}(L+1)(2-\beta)}{4}\left(-e^{-\gamma_3}\dot{\gamma_1}+e^{\gamma_3}\dot{\gamma_2}\right)\omega(\dot{\gamma}(t))\right]\nonumber\\
&+\overline{r_L}~~ \overline{q}\left[\frac{\sqrt{2}}{2}(\ddot{\gamma}_2e^{\gamma_3}+\dot{\gamma}_2\dot{\gamma}_3e^{\gamma_3}-\ddot{\gamma}_1e^{-\gamma_3}+\dot{\gamma}_1\dot{\gamma}_3e^{-\gamma_3})+\frac{\beta L-\beta-2L}{2}\omega(\dot{\gamma}(t))\dot{\gamma}_3\right]\nonumber\\
&-\frac{l}{l_L}L^{\frac{1}{2}}\left[\frac{d}{dt}(\omega(\dot{\gamma}(t)))-\frac{\sqrt{2}(1-\beta)}{2L}\left(-e^{-\gamma_3}\dot{\gamma_1}+e^{\gamma_3}\dot{\gamma_2}\right)\dot{\gamma}_3\right]\Bigg\}e_2.\nonumber\\
\end{align}
Moreover if $\omega(\dot{\gamma}(t))=0$, then
\begin{align}
\nabla^{\Sigma,{1,\beta}}_{\dot{\gamma}}\dot{\gamma}&=
\Bigg\{\overline{q}\dot{\gamma}_3-\frac{\sqrt{2}\overline{p}}{2}(\ddot{\gamma}_2e^{\gamma_3}+\dot{\gamma}_2\dot{\gamma}_3e^{\gamma_3}-\ddot{\gamma}_1e^{-\gamma_3}+\dot{\gamma}_1\dot{\gamma}_3e^{-\gamma_3})\Bigg\}e_1\nonumber\\
&+\Bigg\{\overline{r_L}~~\overline{p}\dot{\gamma}_3+\overline{r_L}~~ \overline{q}
\frac{\sqrt{2}}{2}(\ddot{\gamma}_2e^{\gamma_3}+\dot{\gamma}_2\dot{\gamma}_3e^{\gamma_3}-\ddot{\gamma}_1e^{-\gamma_3}+\dot{\gamma}_1\dot{\gamma}_3e^{-\gamma_3})\nonumber\\
&-\frac{l}{l_L}L^{\frac{1}{2}}\left[\frac{d}{dt}(\omega(\dot{\gamma}(t)))-\frac{\sqrt{2}(1-\beta)}{2L}\left(-e^{-\gamma_3}\dot{\gamma_1}+e^{\gamma_3}\dot{\gamma_2}\right)\dot{\gamma}_3\right]\Bigg\}e_2.\nonumber\\
\end{align}
\begin{lem}
Let $\Sigma\subset(E(1,1),g_L)$ be a regular surface and let $\gamma:[a,b]\rightarrow \Sigma$ be a Euclidean $C^2$-smooth regular curve. Then\\
(1)when $\omega(\dot{\gamma}(t))\neq 0,$
\begin{equation}
k_{\gamma,\Sigma}^{\infty,\nabla^{1,\beta}}=\frac{|\frac{\sqrt{2}(2-\beta)\overline{q}}{4}\left(-e^{-\gamma_3}\dot{\gamma_1}+e^{\gamma_3}\dot{\gamma_2}\right)+\frac{(2-\beta)\overline{p}\dot{\gamma_3}}{2}|}{|\omega(\dot{\gamma}(t))|},
\end{equation}
(2)when $\omega(\dot{\gamma}(t))= 0, ~~~and~~~\frac{d}{dt}(\omega(\dot{\gamma}(t)))=0,$
\begin{equation}
k_{\gamma,\Sigma}^{\infty,\nabla^{1,\beta}}=0,\\
\end{equation}
(3)when $\omega(\dot{\gamma}(t))= 0, ~~~and~~~\frac{d}{dt}(\omega(\dot{\gamma}(t)))\neq0,$
\begin{equation}
{\rm lim}_{L\rightarrow +\infty}\frac{k_{\gamma,\Sigma}^{L,\nabla^{1,\beta}}}{\sqrt{L}}=\frac{|\frac{d}{dt}(\omega(\dot{\gamma}(t)))|}{|\overline{q}\dot{\gamma}_3-\frac{\sqrt{2}\overline{p}}{2}\left(-e^{-\gamma_3}\dot{\gamma_1}+e^{\gamma_3}\dot{\gamma_2}\right)|}.
\end{equation}
\end{lem}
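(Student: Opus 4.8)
The plan is to follow the scheme of Lemma 2.11, substituting the $E(1,1)$ frame identities (4.3) and (4.8) and the Schouten--Van Kampen data of Lemma 4.1 for their affine counterparts. First I would resolve $\dot\gamma$ in the orthonormal tangent frame $\{e_1,e_2\}$ of (2.18). Writing $\dot\gamma=\lambda_1 e_1+\lambda_2 e_2$ and using (4.8) together with (2.18), the same linear algebra as in (2.27)--(2.29) gives $\lambda_1=\overline{q}\dot\gamma_3-\frac{\sqrt2}{2}\overline{p}(-e^{-\gamma_3}\dot\gamma_1+e^{\gamma_3}\dot\gamma_2)$ and $\lambda_2=-\frac{l_L}{l}L^{\frac12}\omega(\dot\gamma(t))$. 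Here $\lambda_1$ is exactly the tangential quantity appearing in the denominator of (4.19). The three ingredients of the geodesic-curvature formula (2.23) (with $\beta$ in place of $\alpha$) are then read off from the expression (4.15) for $\nabla^{\Sigma,1,\beta}_{\dot\gamma}\dot\gamma$, together with $||\dot\gamma||^2_{\Sigma,L}=\lambda_1^2+\lambda_2^2$.

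For part (1), with $\omega(\dot\gamma(t))\neq0$, I would isolate the top-order-in-$L$ term of each ingredient. In the $e_1$-component of (4.15) the two $L$-linear pieces combine to $L\,\omega(\dot\gamma(t))\,\Lambda$, where $\Lambda=\frac{\sqrt2(2-\beta)\overline{q}}{4}(-e^{-\gamma_3}\dot\gamma_1+e^{\gamma_3}\dot\gamma_2)+\frac{(2-\beta)\overline{p}\dot\gamma_3}{2}$, whereas the $e_2$-component is only $O(L^{\frac12})$ because its $L$-linear terms carry the factor $\overline{r_L}=O(L^{-\frac12})$. Hence $||\nabla^{\Sigma,1,\beta}_{\dot\gamma}\dot\gamma||^2_{\Sigma,L}\sim L^2\omega(\dot\gamma(t))^2\Lambda^2$, while $||\dot\gamma||^2_{\Sigma,L}\sim L\omega(\dot\gamma(t))^2$ (using $l_L/l\to1$) and $\langle\nabla^{\Sigma,1,\beta}_{\dot\gamma}\dot\gamma,\dot\gamma\rangle_{\Sigma,L}=O(L)$. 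Substituting into (2.23), the first term tends to $\Lambda^2/\omega(\dot\gamma(t))^2$ and the subtracted term is $O(L^{-1})\to0$, so taking the square root gives (4.17).

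For parts (2) and (3) I would set $\omega(\dot\gamma(t))=0$, so $\lambda_2=0$ and $\dot\gamma=\lambda_1 e_1$, and use the reduced formula (4.16). Writing $\nabla^{\Sigma,1,\beta}_{\dot\gamma}\dot\gamma=Ae_1+Be_2$, formula (2.23) collapses since $||\dot\gamma||^2_{\Sigma,L}=\lambda_1^2$ and $\langle\nabla^{\Sigma,1,\beta}_{\dot\gamma}\dot\gamma,\dot\gamma\rangle^2_{\Sigma,L}/||\dot\gamma||^6_{\Sigma,L}=A^2/\lambda_1^4$: the $A^2$ terms cancel and $k^{L,\nabla^{1,\beta}}_{\gamma,\Sigma}=|B|/\lambda_1^2$. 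Because $\overline{r_L}=O(L^{-\frac12})$ and $l/l_L\to1$, the coefficient $B$ is dominated by $-\frac{l}{l_L}L^{\frac12}[\frac{d}{dt}\omega(\dot\gamma(t))-\frac{\sqrt2(1-\beta)}{2L}(-e^{-\gamma_3}\dot\gamma_1+e^{\gamma_3}\dot\gamma_2)\dot\gamma_3]$. When $\frac{d}{dt}\omega(\dot\gamma(t))=0$ this is $O(L^{-\frac12})\to0$, giving (4.18); when $\frac{d}{dt}\omega(\dot\gamma(t))\neq0$ it behaves like $-L^{\frac12}\frac{d}{dt}\omega(\dot\gamma(t))$, so $|B|/(\sqrt L\,\lambda_1^2)\to|\frac{d}{dt}\omega(\dot\gamma(t))|/\lambda_1^2$, which is (4.19), the tangential factor entering squared exactly as in the affine analogue (2.26).

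The main obstacle is the bookkeeping of the $L$-asymptotics. I must confirm that in part (1) the $L^2$ leading term of $||\nabla^{\Sigma,1,\beta}_{\dot\gamma}\dot\gamma||^2_{\Sigma,L}$ arises solely from the $e_1$-component and that the cross term $\langle\nabla^{\Sigma,1,\beta}_{\dot\gamma}\dot\gamma,\dot\gamma\rangle^2_{\Sigma,L}/||\dot\gamma||^6_{\Sigma,L}$ is genuinely of lower order, and in parts (2)--(3) that the $A$-contributions cancel exactly so only $B$ survives. The delicate point is to keep the two $e_2$-contributions $\frac{d}{dt}\omega(\dot\gamma(t))$ and $\frac{\sqrt2(1-\beta)}{2L}(-e^{-\gamma_3}\dot\gamma_1+e^{\gamma_3}\dot\gamma_2)\dot\gamma_3$ separate, since it is precisely the vanishing or non-vanishing of $\frac{d}{dt}\omega(\dot\gamma(t))$ that distinguishes (4.18) from (4.19).
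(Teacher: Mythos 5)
Your proposal is correct and takes essentially the route the paper itself intends: the paper supplies no separate proof of this lemma, relying on the template of Lemma 2.11, and your argument --- the decomposition $\dot\gamma=\lambda_1e_1+\lambda_2e_2$ via the analogue of (2.27)--(2.29), the extraction of the $L$-linear part $L\,\omega(\dot\gamma(t))\,\Lambda$ of the $e_1$-component of (4.15) for part (1) (note $\frac{\beta L-\beta-2L}{2}=-\frac{(2-\beta)L}{2}+O(1)$, which is exactly how the two pieces combine into your $\Lambda$), and the collapse $k^{L,\nabla^{1,\beta}}_{\gamma,\Sigma}=|B|/\lambda_1^2$ when $\omega(\dot\gamma(t))=0$ for parts (2)--(3) --- is precisely that adaptation, with all the needed asymptotics ($\overline{r_L}=O(L^{-\frac12})$, $l/l_L\to1$, cross term $O(L)$ against $\|\dot\gamma\|^6_{\Sigma,L}\sim L^3\omega(\dot\gamma(t))^6$) checked correctly.

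One point needs to be made explicit rather than glossed over: in part (3) your (correct) derivation gives the limit $|\frac{d}{dt}(\omega(\dot\gamma(t)))|/\lambda_1^2$, with the tangential factor \emph{squared}, exactly as in the affine analogue (2.26); but the displayed formula (4.19) has $|\overline{q}\dot{\gamma}_3-\frac{\sqrt{2}\,\overline{p}}{2}(-e^{-\gamma_3}\dot{\gamma_1}+e^{\gamma_3}\dot{\gamma_2})|$ only to the first power, so your closing claim that your limit ``is (4.19)'' is not literally true as the statement is printed. The squared denominator is the right one: it matches (2.26), it matches the numerator-over-cube structure of the signed version in Lemma 4.5(3) (whose net homogeneity is again $\lambda_1^{-2}$), and it is forced by reparametrization invariance of $k^{L,\nabla^{1,\beta}}_{\gamma,\Sigma}$ --- under $t\mapsto ct$ both $\frac{d}{dt}\omega(\dot\gamma(t))$ and $\lambda_1^2$ scale by $c^2$, whereas the first-power formula would not be invariant. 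So the printed statement carries a typographical error (the missing exponent $2$, which recurs in the analogous Lemma 5.4(3)), and your write-up should state that the computation proves the corrected formula instead of silently identifying the two expressions.
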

\begin{lem}
Let $\Sigma\subset(E(1,1),g_L)$ be a regular surface.
Let $\gamma:[a,b]\rightarrow \Sigma$ be a Euclidean $C^2$-smooth regular curve. Then\\
(1)when $\omega(\dot{\gamma}(t))\neq 0$,
\begin{equation}
k_{\gamma,\Sigma}^{\infty,\nabla^{1,\beta},s}=\frac{|\frac{\sqrt{2}(2-\beta)\overline{q}}{4}\left(-e^{-\gamma_3}\dot{\gamma_1}+e^{\gamma_3}\dot{\gamma_2}\right)+\frac{(2-\beta)\overline{p}\dot{\gamma_3}}{2}|}{|\omega(\dot{\gamma}(t))|},
\end{equation}
(2)when $\omega(\dot{\gamma}(t))= 0, ~~ ~and~ ~~\frac{d}{dt}(\omega(\dot{\gamma}(t)))=0$,
\begin{equation}
k^{\infty,\nabla^{1,\beta},s}_{\gamma,\Sigma}=0,\\
\end{equation}
(3)when $\omega(\dot{\gamma}(t))= 0, ~~ ~and~ ~~\frac{d}{dt}(\omega(\dot{\gamma}(t)))\neq0$,
\begin{equation}
{\rm lim}_{L\rightarrow +\infty}\frac{k_{\gamma,\Sigma}^{L,\nabla^{1,\beta},s}}{\sqrt{L}}=\frac{\left[-\overline{q}\dot{\gamma}_3+\frac{\sqrt{2}\overline{p}}{2}\left(-e^{-\gamma_3}\dot{\gamma_1}+e^{\gamma_3}\dot{\gamma_2}\right)\right]\frac{d}{dt}(\omega(\dot{\gamma}(t)))}{|\overline{q}\dot{\gamma}_3-\frac{\sqrt{2}\overline{p}}{2}\left(-e^{-\gamma_3}\dot{\gamma_1}+e^{\gamma_3}\dot{\gamma_2}\right)|^3}.
\end{equation}
\end{lem}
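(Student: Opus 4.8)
The plan is to mirror, step for step, the proof of Lemma 2.14, feeding in the $E(1,1)$ data of Lemma 4.1 together with the expression (4.15) for $\nabla^{\Sigma,1,\beta}_{\dot{\gamma}}\dot{\gamma}$. As in the affine case the signed geodesic curvature is
$$k^{L,\nabla^{1,\beta},s}_{\gamma,\Sigma}=\frac{\langle \nabla^{\Sigma,1,\beta}_{\dot{\gamma}}\dot{\gamma},J_L(\dot{\gamma})\rangle_{\Sigma,L}}{||\dot{\gamma}||^3_{\Sigma,L}},$$
with $J_L$ the rotation (2.19) on $T\Sigma$. Writing $A:=\frac{\sqrt{2}}{2}\left(-e^{-\gamma_3}\dot{\gamma}_1+e^{\gamma_3}\dot{\gamma}_2\right)$ for brevity, (4.8) gives $\dot{\gamma}=\dot{\gamma}_3X_1+AX_2+\omega(\dot{\gamma}(t))X_3$. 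The first step is to resolve $\dot{\gamma}$ in the orthonormal frame $\{e_1,e_2\}$ of (2.18): exactly as in (2.27)--(2.29) one obtains
$$\dot{\gamma}=\left(\overline{q}\dot{\gamma}_3-\overline{p}A\right)e_1-\frac{l_L}{l}L^{\frac{1}{2}}\omega(\dot{\gamma}(t))e_2,$$
and then applying $J_L$ as in (2.40) yields
$$J_L(\dot{\gamma})=\frac{l_L}{l}L^{\frac{1}{2}}\omega(\dot{\gamma}(t))e_1+\left(\overline{q}\dot{\gamma}_3-\overline{p}A\right)e_2.$$

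Next I would read off the $e_1$- and $e_2$-components of $\nabla^{\Sigma,1,\beta}_{\dot{\gamma}}\dot{\gamma}$ from (4.15) (or from (4.16) when $\omega(\dot{\gamma}(t))=0$) and expand the numerator $\langle \nabla^{\Sigma,1,\beta}_{\dot{\gamma}}\dot{\gamma},J_L(\dot{\gamma})\rangle_{\Sigma,L}$ as the two products dictated by the displays above, recording at the same time that (as in (2.31), (2.34)) $||\dot{\gamma}||_{\Sigma,L}\sim L^{\frac{1}{2}}|\omega(\dot{\gamma}(t))|$ when $\omega(\dot{\gamma}(t))\neq0$ and $||\dot{\gamma}||_{\Sigma,L}=|\overline{q}\dot{\gamma}_3-\overline{p}A|$ when $\omega(\dot{\gamma}(t))=0$. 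The three cases then separate purely by the order in $L$ of the surviving terms, using the $\sim$-notation (2.13) together with $\overline{r_L}\sim L^{-\frac{1}{2}}\frac{X_3u}{l}$ and $l_L\to l$.

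In case (1), with $\omega(\dot{\gamma}(t))\neq0$, the dominant term comes from the product of the $(L+1)$-part of the $e_1$-component of $\nabla^{\Sigma,1,\beta}_{\dot{\gamma}}\dot{\gamma}$ with the $L^{\frac{1}{2}}\omega$-part of $J_L(\dot{\gamma})$; this gives an $L^{\frac{3}{2}}$ term whose coefficient reduces to $\omega(\dot{\gamma}(t))^2\left[\frac{\sqrt{2}(2-\beta)\overline{q}}{4}\left(-e^{-\gamma_3}\dot{\gamma}_1+e^{\gamma_3}\dot{\gamma}_2\right)+\frac{(2-\beta)\overline{p}\dot{\gamma}_3}{2}\right]$, while every remaining product is $O(L^{\frac{1}{2}})$ because $\overline{r_L}\sim L^{-\frac{1}{2}}$. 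Dividing by $||\dot{\gamma}||^3_{\Sigma,L}\sim L^{\frac{3}{2}}|\omega(\dot{\gamma}(t))|^3$ produces (4.20). In case (2), with $\omega(\dot{\gamma}(t))=0$ and $\frac{d}{dt}(\omega(\dot{\gamma}(t)))=0$, the $e_1$-factor of $J_L(\dot{\gamma})$ vanishes and the term containing $\frac{d}{dt}(\omega(\dot{\gamma}(t)))$ in the $e_2$-component of (4.16) vanishes, so the numerator is $O(L^{-\frac{1}{2}})$ against an $O(1)$ denominator, giving (4.21). In case (3), with $\omega(\dot{\gamma}(t))=0$ but $\frac{d}{dt}(\omega(\dot{\gamma}(t)))\neq0$, the only surviving leading term is $-\frac{l}{l_L}L^{\frac{1}{2}}\frac{d}{dt}(\omega(\dot{\gamma}(t)))$ in the $e_2$-component, whence the numerator is $\sim -L^{\frac{1}{2}}\frac{d}{dt}(\omega(\dot{\gamma}(t)))\,(\overline{q}\dot{\gamma}_3-\overline{p}A)$; dividing by $|\overline{q}\dot{\gamma}_3-\overline{p}A|^3$ and by $\sqrt{L}$ gives (4.22).

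The main obstacle is the bookkeeping in case (1): one must check that no $O(L^{\frac{3}{2}})$ contribution other than the single $e_1$--$e_1$ product survives, and that its coefficient collapses exactly to the numerator appearing in (4.20). This forces one to track the $L$-linear pieces of Lemma 4.1 carefully --- in particular the factors $\frac{\sqrt{2}(L+1)(2-\beta)}{4}$ and $\frac{\beta L-\beta-2L}{2}$ in the $e_1$-component of $\nabla^{\Sigma,1,\beta}_{\dot{\gamma}}\dot{\gamma}$ --- and to verify that the $O(L)$ pieces hidden in the $e_2$-component are suppressed to $O(L^{\frac{1}{2}})$ by the factor $\overline{r_L}$. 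Once this leading coefficient is pinned down, cases (2) and (3) follow by the same elementary order-counting used at the end of the proof of Lemma 2.14; note that cases (1) and (2) here reproduce, respectively, cases (1) and (2) of the geodesic-curvature Lemma 4.4, as expected.
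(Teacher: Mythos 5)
Your proposal is correct and takes exactly the approach the paper intends: the paper states this lemma without an explicit proof, leaving it as the verbatim analogue of Lemma 2.14, and your argument --- decomposing $\dot{\gamma}$ and $J_L(\dot{\gamma})$ in the frame (2.18) as in (2.29) and (2.40), reading the $e_1$- and $e_2$-components of $\nabla^{\Sigma,1,\beta}_{\dot{\gamma}}\dot{\gamma}$ from (4.15)--(4.16), and sorting the three cases by order-counting in $L$ with $\overline{r_L}=O(L^{-\frac{1}{2}})$ --- is precisely that analogue. Your bookkeeping in case (1) is also right: the only $O(L^{\frac{3}{2}})$ term is $\frac{l_L}{l}L^{\frac{1}{2}}\omega(\dot{\gamma}(t))$ times the $L$-linear part $\frac{(2-\beta)L}{2}\omega(\dot{\gamma}(t))\left(\overline{q}A+\overline{p}\dot{\gamma}_3\right)$ of the $e_1$-component, which after dividing by $\|\dot{\gamma}\|^3_{\Sigma,L}\sim L^{\frac{3}{2}}|\omega(\dot{\gamma}(t))|^3$ yields (4.20), and your sign in case (3) matches (4.22).
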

Similarly to Theorem 4.3 in \cite{CDPT}, we have
\vskip 0.5 true cm
\begin{thm} The second fundamental form $II^{\nabla^{1,\beta},L}$ of the
embedding of $\Sigma$ into $(E(1,1),g_L)$ is given by
\begin{equation}
II^{\nabla^{1,\beta},L}=\left(
  \begin{array}{cc}
  h^{1,\beta}_{11},
    & h^{1,\beta}_{12} \\
  h^{1,\beta}_{21} ,
    & h^{1,\beta}_{22} \\
  \end{array}
\right),
\end{equation}
where
$$h^{1,\beta}_{11}=\frac{l}{l_L}[X_1(\overline{p})+X_2(\overline{q})]-\overline{r_L}\overline{pq}(1-\beta)L^{-\frac{1}{2}},$$
    $$h^{1,\beta}_{12}=-\frac{l_L}{l}\langle e_1,\nabla_H(\overline{r_L})\rangle_L-\frac{(1-\beta)\sqrt{L}}{2}+\frac{(1-\beta)(\overline{q_L}^2-\overline{p_L}^2)}{2\sqrt{L}}+\frac{(1-\beta)\overline{r_L}^2(\overline{q}^2-\overline{p}^2)}{2\sqrt{L}},$$
    $$h^{1,\beta}_{21}=-\frac{l_L}{l}\langle e_1,\nabla_H(\overline{r_L})\rangle_L-\frac{\sqrt{L}}{2}+\frac{\overline{q_L}^2-\overline{p_L}^2}{2\sqrt{L}}+\frac{\overline{q}^2\overline{r_L}^2(1-\beta L-\beta)}{2\sqrt{L}}-\frac{\overline{p}^2\overline{r_L}^2(1+\beta L-\beta)}{2\sqrt{L}},$$
    $$h^{1,\beta}_{22}=-\frac{l^2}{l_L^2}\langle e_2,\nabla_H(\frac{r}{l})\rangle_L+\widetilde{X_3}(\overline{r_L})+\frac{(1-\alpha)\overline{p_L}\overline{q_L}\overline{r_L}}{\sqrt{L}}+\frac{(1-\alpha)\overline{p}\overline{q}\overline{r_L}^3}{\sqrt{L}}.$$
\end{thm}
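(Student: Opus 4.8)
The plan is to mirror the argument of Theorem 2.11, since by its very definition (4.4) the connection $\nabla^{1,\beta}$ is a metric connection on $(E(1,1),g_L)$. The first step is to record the only consequence of metric compatibility that is needed. Because $v_L$ is the unit normal and $e_1,e_2$ are tangent, $\langle v_L,e_j\rangle_L=0$ and hence $e_i\langle v_L,e_j\rangle_L=0$; feeding this into $e_i\langle v_L,e_j\rangle_L-\langle\nabla^{1,\beta}_{e_i}v_L,e_j\rangle_L-\langle\nabla^{1,\beta}_{e_i}e_j,v_L\rangle_L=0$ gives
\begin{equation}
h^{1,\beta}_{ij}=\langle\nabla^{1,\beta}_{e_i}v_L,e_j\rangle_L=-\langle\nabla^{1,\beta}_{e_i}e_j,v_L\rangle_L,\qquad i,j=1,2.\nonumber
\end{equation}
So the whole theorem is reduced to computing the four fields $\nabla^{1,\beta}_{e_i}e_j$ and pairing each against $v_L=\overline{p_L}X_1+\overline{q_L}X_2+\overline{r_L}\widetilde{X_3}$. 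I would stress here that, unlike the Levi-Civita case, $\nabla^{1,\beta}$ carries torsion, which is exactly why $h^{1,\beta}_{12}$ and $h^{1,\beta}_{21}$ will come out unequal.

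The second step is the actual expansion. Using the frame (2.18) I write $e_1=\overline{q}X_1-\overline{p}X_2$ and $e_2=\overline{r_L}\,\overline{p}X_1+\overline{r_L}\,\overline{q}X_2-\frac{l}{l_L}\widetilde{X_3}$, apply the Leibniz rule for $\nabla^{1,\beta}$, and substitute the structure equations of Lemma 4.1. Each $\nabla^{1,\beta}_{e_i}e_j$ then splits into a \emph{derivative part}, collecting the terms where $e_i$ differentiates the coefficient functions $\overline{p},\overline{q},\overline{r_L},l/l_L$, and an \emph{algebraic part}, collecting the $\nabla^{1,\beta}_{X_a}X_b$ contributions, which in $E(1,1)$ carry the characteristic factors $\tfrac{(1\pm L)(1-\beta)}{2}$ and $\tfrac{L+1}{2}$ and a hidden $L^{-1/2}$ coming from $\widetilde{X_3}=L^{-1/2}X_3$. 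Pairing with $v_L$ and simplifying with $\overline{p}^2+\overline{q}^2=1$ should recombine the derivative parts into the intrinsic expressions $\langle e_1,\nabla_H(\overline{r_L})\rangle_L$ and $\langle e_2,\nabla_H(\tfrac{r}{l})\rangle_L$ appearing in the claimed $h^{1,\beta}_{ij}$, exactly as in the proofs of (2.48)--(2.53).

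I expect the main obstacle to be the bookkeeping of the $\sqrt{L}$-order algebraic terms in the two off-diagonal entries. Because the coefficients in Lemma 4.1 are asymmetric (e.g. $\nabla^{1,\beta}_{X_1}X_2$ versus $\nabla^{1,\beta}_{X_2}X_1$ differ through the $\tfrac{L-1}{L}$ versus $\tfrac{-L-1}{L}$ factors), the terms quadratic in $\overline{p},\overline{q}$ do not cancel the way they do on the diagonal, and one must track the signs carefully to produce the distinct coefficients $\tfrac{(1-\beta)(\overline{q_L}^2-\overline{p_L}^2)}{2\sqrt{L}}$, $\tfrac{(1-\beta)\overline{r_L}^2(\overline{q}^2-\overline{p}^2)}{2\sqrt{L}}$ in $h^{1,\beta}_{12}$ against the analogous but $\beta$-weighted $\overline{q}^2\overline{r_L}^2(1-\beta L-\beta)$ and $\overline{p}^2\overline{r_L}^2(1+\beta L-\beta)$ terms in $h^{1,\beta}_{21}$. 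Once these lowest-order pieces are pinned down, the diagonal entries $h^{1,\beta}_{11}$ and $h^{1,\beta}_{22}$ follow by the same direct pairing, completing the proof.
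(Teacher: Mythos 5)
Your proposal is correct and takes essentially the same approach as the paper: the paper proves the analogous Theorem 2.11 by using metric compatibility of the deformed connection to reduce to $h_{ij}=\langle\nabla^{1,\beta}_{e_i}v_L,e_j\rangle_L=-\langle\nabla^{1,\beta}_{e_i}e_j,v_L\rangle_L$ and then expanding $\nabla^{1,\beta}_{e_i}e_j$ in the frame (2.18) via the connection coefficients, and for the present statement it simply invokes that same computation with Lemma 4.1 in place of Lemma 2.1. Your observation that the torsion of $\nabla^{1,\beta}$ (visible in the asymmetric coefficients of Lemma 4.1) is what makes $h^{1,\beta}_{12}\neq h^{1,\beta}_{21}$ matches the structure of the paper's formulas, so the plan is sound and faithful to the paper's method.
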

\indent The Riemannian mean curvature $\mathcal{H}_{\nabla^{1,\beta},L}$ of $\Sigma$ is defined by
$$\mathcal{H}_{\nabla^{1,\beta},L}:={\rm tr}(II^{\nabla^{1,\beta},L}).$$
\begin{prop} Away from characteristic points, the horizontal mean curvature associated to the first kind of deformed Schouten-Van Kampen connection $\nabla^{1,\beta}$, $\mathcal{H}_{\nabla^{1,\beta},\infty}$ of $\Sigma\subset E(1,1)$ is given by
\begin{equation}
\mathcal{H}_{\nabla^{1,\beta},\infty}={\rm lim}_{L\rightarrow +\infty}\mathbb{}\mathcal{H}_{\nabla^{1,\beta},L}=X_1(\overline{p})+X_2(\overline{q}).
\end{equation}
\end{prop}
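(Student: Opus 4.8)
The plan is to compute the Riemannian mean curvature as the trace of the second fundamental form supplied by Theorem 4.6 and then pass to the limit term by term. Since $\mathcal{H}_{\nabla^{1,\beta},L}=\mathrm{tr}(II^{\nabla^{1,\beta},L})=h^{1,\beta}_{11}+h^{1,\beta}_{22}$, only the two diagonal entries enter, and away from characteristic points $l=\sqrt{p^2+q^2}>0$, so every quantity $\overline{p},\overline{q},\overline{p_L},\overline{q_L},\overline{r_L}$ is well defined. First I would write out
$$\mathcal{H}_{\nabla^{1,\beta},L}=\frac{l}{l_L}[X_1(\overline{p})+X_2(\overline{q})]-\overline{r_L}\,\overline{p}\,\overline{q}(1-\beta)L^{-\frac12}-\frac{l^2}{l_L^2}\langle e_2,\nabla_H(\tfrac{r}{l})\rangle_L+\widetilde{X_3}(\overline{r_L})+\frac{(1-\beta)\overline{p_L}\,\overline{q_L}\,\overline{r_L}}{\sqrt{L}}+\frac{(1-\beta)\overline{p}\,\overline{q}\,\overline{r_L}^3}{\sqrt{L}}.$$

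The decisive observation is that $r=\widetilde{X_3}u=L^{-\frac12}X_3u$, so setting $R:=X_3u$ (independent of $L$) gives $r=L^{-\frac12}R$, $l_L=\sqrt{l^2+L^{-1}R^2}$, and therefore $\overline{r_L}=O(L^{-\frac12})$, $\tfrac{l}{l_L}\to1$, $\tfrac{l^2}{l_L^2}\to1$, $\overline{p_L}\to\overline{p}$, $\overline{q_L}\to\overline{q}$ as $L\to+\infty$. In other words, every occurrence of $r$ (hence of $\overline{r_L}$) supplies one factor $L^{-\frac12}$, and the orthonormal-frame normalizations tend to their horizontal counterparts.

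With these estimates I would dispose of all terms except the first. The correction $\overline{r_L}\,\overline{p}\,\overline{q}(1-\beta)L^{-\frac12}$ is $O(L^{-1})$. For the $h_{22}$ contributions I would expand, using that $\nabla_H(\tfrac{r}{l})$ is horizontal, $\langle e_2,\nabla_H(\tfrac{r}{l})\rangle_L=\overline{r_L}\overline{p}X_1(\tfrac{r}{l})+\overline{r_L}\overline{q}X_2(\tfrac{r}{l})=O(L^{-1})$, since $\tfrac{r}{l}=L^{-\frac12}\tfrac{R}{l}$ and $\overline{r_L}=O(L^{-\frac12})$; likewise $\widetilde{X_3}(\overline{r_L})=L^{-\frac12}X_3(\overline{r_L})=O(L^{-1})$; and the last two terms carry $\overline{r_L}/\sqrt{L}=O(L^{-1})$ and $\overline{r_L}^3/\sqrt{L}=O(L^{-2})$ respectively. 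Meanwhile $\tfrac{l}{l_L}[X_1(\overline{p})+X_2(\overline{q})]\to X_1(\overline{p})+X_2(\overline{q})$, which yields the claimed formula.

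This argument runs entirely parallel to the proof of Proposition 2.18 in the affine group; the only structural difference is that the $E(1,1)$ expression for $h^{1,\beta}_{22}$ in Theorem 4.6 contains no surviving term of the form $-(1-\beta)\overline{p_L}$, so no additive $-(1-\beta)\overline{p}$ remainder appears in the limit, unlike the affine case. Consequently the expected difficulty is not any single estimate but the uniform bookkeeping of the powers of $L$ carried by $\overline{r_L}$ and its horizontal and $\widetilde{X_3}$-derivatives; once one records that each such factor is $O(L^{-\frac12})$, every correction term is seen to be $O(L^{-1})$ and the stated limit follows.
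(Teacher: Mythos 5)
Your proof is correct and takes essentially the same route as the paper: the paper states this proposition without a written proof, but its proof of the analogous Proposition 2.18 in the affine group is exactly your term-by-term estimate of ${\rm tr}(II^{\nabla^{1,\beta},L})=h^{1,\beta}_{11}+h^{1,\beta}_{22}$ using $\overline{r_L}=O(L^{-\frac{1}{2}})$, $\tfrac{l}{l_L}\rightarrow 1$, $\langle e_2,\nabla_H(\tfrac{r}{l})\rangle_L=O(L^{-1})$ and $\widetilde{X_3}(\overline{r_L})\rightarrow 0$. You also correctly read the stray $(1-\alpha)$ factors in $h^{1,\beta}_{22}$ of Theorem 4.6 as a typo for $(1-\beta)$, and your remark that no $-(1-\beta)\overline{p}$ remainder survives here (unlike the $-(1-\alpha)\overline{p}$ term in the affine case) accurately captures the only structural difference between the two computations.
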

By Lemma 4.1 and (2.54), we have
\begin{lem}
Let $E(1,1)$ be the group of rigid motions of the Minkowski plane, then
\begin{align}
&R^{1,\beta}(X_1,X_2)X_1=\left[\frac{-(1-L^2)(1-\beta)^2}{4L}+\frac{1+L}{2}\right]X_2,~~~ R^{1,\beta}(X_1,X_2)X_2=\left[\frac{(1-L^2)(1-\beta)^2}{4L}-\frac{1+L}{2}\right]X_1,\nonumber\\
&R^{1,\beta}(X_1,X_2)X_3=0,~~~R^{1,\beta}(X_1,X_3)X_1=\frac{(-L^2+2L+3)(1-\beta)}{4L}X_3,~~~R^{1,\beta}(X_1,X_3)X_2=0,\nonumber\\
&R^{1,\beta}(X_1,X_3)X_3=\frac{(1-\beta)(L^2-L-2)}{2}X_1,~~~R^{1,\beta}(X_2,X_3)X_1=0,\nonumber\\
&R^{1,\beta}(X_2,X_3)X_2=-\frac{-(1-\beta)(L^2+2L+1)}{4L}X_3,~~~R^{1,\beta}(X_2,X_3)X_3=\frac{(1-\beta)(L^2+2L+1)}{4}X_2.\nonumber\\
\end{align}
\end{lem}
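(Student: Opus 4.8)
The plan is to prove Lemma 4.7 by a direct computation straight from the definition of the curvature tensor. Specializing (2.54) to the connection $\nabla^{1,\beta}$ gives
$$R^{1,\beta}(X,Y)Z=\nabla^{1,\beta}_X\nabla^{1,\beta}_Y Z-\nabla^{1,\beta}_Y\nabla^{1,\beta}_X Z-\nabla^{1,\beta}_{[X,Y]}Z,$$
so it suffices to evaluate the nine expressions obtained by letting $(X,Y)$ range over the three pairs $(X_1,X_2)$, $(X_1,X_3)$, $(X_2,X_3)$ and $Z$ over $X_1,X_2,X_3$. All the data needed are already on hand: the covariant derivatives $\nabla^{1,\beta}_{X_i}X_j$ are listed in Lemma 4.1, and the brackets $[X_1,X_2]=X_3$, $[X_1,X_3]=X_2$, $[X_2,X_3]=0$ are recorded in (4.2).

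Because each $\nabla^{1,\beta}_{X_i}X_j$ from Lemma 4.1 is a constant multiple of a single basis field, every inner covariant derivative produces one basis vector, and applying a second $\nabla^{1,\beta}_{X_k}$ is immediate by reading off the corresponding entry of Lemma 4.1 again; no Leibniz correction term appears, since the structure constants are independent of the point. The three diagonal identities $\nabla^{1,\beta}_{X_1}X_1=\nabla^{1,\beta}_{X_2}X_2=\nabla^{1,\beta}_{X_3}X_3=0$ annihilate one of the two second-order terms in several of the nine cases, and the relation $[X_2,X_3]=0$ removes the bracket term entirely whenever $(X,Y)=(X_2,X_3)$. As a representative case, for $R^{1,\beta}(X_1,X_2)X_1$ I would compute
$$\nabla^{1,\beta}_{X_1}\nabla^{1,\beta}_{X_2}X_1=\frac{(1-\beta)(-L-1)}{2L}\,\nabla^{1,\beta}_{X_1}X_3=\frac{(1-\beta)^2(1-L^2)}{4L}X_2,$$
while $\nabla^{1,\beta}_{X_2}\nabla^{1,\beta}_{X_1}X_1=0$ and $\nabla^{1,\beta}_{[X_1,X_2]}X_1=\nabla^{1,\beta}_{X_3}X_1=-\tfrac{L+1}{2}X_2$; combining the three pieces yields the coefficient of $X_2$ in this component. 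The remaining eight components are produced by exactly the same two-step substitution.

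The computation is entirely mechanical, so the only real obstacle is algebraic bookkeeping: correctly tracking the numerous factors of $L$ and $1-\beta$ and simplifying the resulting rational expressions in $L$, such as $(-L-1)(L-1)=1-L^2$ and $(L+1)^2=L^2+2L+1$. As a safeguard against sign and arithmetic slips I would verify the antisymmetry $R^{1,\beta}(X_i,X_j)=-R^{1,\beta}(X_j,X_i)$ on each fixed $Z$, check that the pattern of vanishing components, namely $R^{1,\beta}(X_1,X_2)X_3=0$, $R^{1,\beta}(X_1,X_3)X_2=0$ and $R^{1,\beta}(X_2,X_3)X_1=0$, is consistent with the behaviour of $\nabla^{1,\beta}$ relative to the splitting $H^1\oplus H^{1,\bot}$, and would cross-check the whole procedure against the analogous affine-group computation already carried out in Lemma 2.17.
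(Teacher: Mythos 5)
Your strategy coincides with the paper's own proof, which consists entirely of the sentence ``By Lemma 4.1 and (2.54), we have'': a two-step substitution using the connection table and the brackets (4.2), with no Leibniz terms since all coefficients are constants. The mechanics you describe are right, but your representative case does not actually produce the stated formula, and you assert that it does. Taking Lemma 4.1 exactly as printed, $\nabla^{1,\beta}_{X_1}X_3=-\frac{(1-L)(1-\beta)}{2}X_2=\frac{(1-\beta)(L-1)}{2}X_2$, your intermediate value $\nabla^{1,\beta}_{X_1}\nabla^{1,\beta}_{X_2}X_1=\frac{(1-\beta)^2(1-L^2)}{4L}X_2$ is indeed what the printed table gives, and adding $-\nabla^{1,\beta}_{X_3}X_1=\frac{L+1}{2}X_2$ yields
\begin{equation*}
R^{1,\beta}(X_1,X_2)X_1=\left[\frac{(1-L^2)(1-\beta)^2}{4L}+\frac{1+L}{2}\right]X_2,
\end{equation*}
whereas the lemma claims $\left[\frac{-(1-L^2)(1-\beta)^2}{4L}+\frac{1+L}{2}\right]X_2$ --- the opposite sign on the $(1-\beta)^2$ term. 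So, followed verbatim, your plan proves a formula that contradicts the statement in its very first component.

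The source of the clash is a sign typo in the input table, which your proof must detect and repair: the printed entry $\nabla^{1,\beta}_{X_1}X_3$ is incompatible with metricity, since together with $\nabla^{1,\beta}_{X_1}X_2=\frac{(1-\beta)(L-1)}{2L}X_3$ it gives $\langle\nabla^{1,\beta}_{X_1}X_2,X_3\rangle_L+\langle X_2,\nabla^{1,\beta}_{X_1}X_3\rangle_L=(1-\beta)(L-1)\neq 0$, contradicting the fact, emphasized throughout the paper, that $\nabla^{1,\beta}$ is a metric connection. The Koszul formula with $[X_1,X_3]=X_2$ and $\|X_3\|_L^2=L$ gives $\nabla_{X_1}X_3=\frac{1-L}{2}X_2$, hence $\nabla^{1,\beta}_{X_1}X_3=\frac{(1-\beta)(1-L)}{2}X_2$, and with this corrected entry your two-step substitution does reproduce the lemma's values of $R^{1,\beta}(X_1,X_2)X_1$, $R^{1,\beta}(X_1,X_2)X_2$, $R^{1,\beta}(X_1,X_3)X_1$ and $R^{1,\beta}(X_2,X_3)X_3$. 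Note also that the safeguard you propose, antisymmetry $R^{1,\beta}(X_i,X_j)=-R^{1,\beta}(X_j,X_i)$, is vacuous: it holds identically from (2.54) no matter what table is fed in, so it cannot catch this error. The operative check is skew-symmetry in the last two slots, $\langle R^{1,\beta}(X,Y)Z,W\rangle_L=-\langle R^{1,\beta}(X,Y)W,Z\rangle_L$, valid precisely because the connection is metric; it flags the bad entry immediately, and it would also surface two slips in the target statement itself: pairing with $R^{1,\beta}(X_1,X_3)X_1$ forces $R^{1,\beta}(X_1,X_3)X_3=\frac{(1-\beta)(L^2-2L-3)}{4}X_1$ rather than the printed $\frac{(1-\beta)(L^2-L-2)}{2}X_1$, and the double negative in the printed $R^{1,\beta}(X_2,X_3)X_2$ should resolve to an overall minus so as to pair with $R^{1,\beta}(X_2,X_3)X_3$.
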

\vskip 0.5 true cm
\begin{prop} Away from characteristic points, we have
\begin{equation}
\mathcal{K}^{\Sigma,\nabla^{1,\beta}}(e_1,e_2)\rightarrow D_0+O(L^{-2}),~~{\rm as}~~L\rightarrow +\infty,
\end{equation}
where
\begin{align}
D_0&:=-\frac{(2-\beta)}{2}\langle e_1,\nabla_H(\frac{X_3u}{|\nabla_Hu|})\rangle-\frac{5(1-\beta)(\overline{q}^2-\overline{p}^2)}{4}+\left(\frac{(1-\beta)^2+2}{4}\frac{X_3u}{|\nabla_Hu|}\right)^2.
\end{align}
\end{prop}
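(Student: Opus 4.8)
The plan is to follow the route already used in Propositions 2.18--2.19 and 3.9, now with the $E(1,1)$ data of Lemma 4.1, Lemma 4.8 and Theorem 4.7. The starting point is the Gauss equation
$$\mathcal{K}^{\Sigma,\nabla^{1,\beta}}(e_1,e_2)=\mathcal{K}^{\nabla^{1,\beta}}(e_1,e_2)+{\rm det}(II^{\nabla^{1,\beta},L}),$$
the $E(1,1)$ analogue of (2.56). I would compute the two terms on the right separately as asymptotic expansions in $L$, and only at the very end add them; the essential feature to watch is that each term carries contributions of order $L$ and of order $\sqrt L$, and the statement asserts that, after summation, all of these organize so that the leading surviving term is $O(1)$ and equals $D_0$.

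For the ambient term $\mathcal{K}^{\nabla^{1,\beta}}(e_1,e_2)=-\langle R^{1,\beta}(e_1,e_2)e_1,e_2\rangle_L$ I would insert $e_1=\overline{q}X_1-\overline{p}X_2$ and $e_2=\overline{r_L}\,\overline{p}X_1+\overline{r_L}\,\overline{q}X_2-\tfrac{l}{l_L}\widetilde{X_3}$ from (2.18), expand by multilinearity of the curvature, and substitute the components from Lemma 4.8. Because $X_1,X_2,\widetilde{X_3}=L^{-1/2}X_3$ are $g_L$-orthonormal, every slot filled by $X_3$ contributes a factor $L$ through $\langle X_3,X_3\rangle_L=L$; combining this with the elementary limits $\overline{r_L}=\tfrac{r}{l_L}\sim L^{-1/2}\tfrac{X_3u}{|\nabla_Hu|}$, $\tfrac{l}{l_L}\to1$, $\overline{p_L}\to\overline{p}$, $\overline{q_L}\to\overline{q}$ and the identity $\overline{p}^2+\overline{q}^2=1$ produces an expansion whose leading coefficient is of order $L$. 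This is the counterpart of (2.62).

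For the extrinsic term I would read the entries $h^{1,\beta}_{ij}$ off Theorem 4.7 and form ${\rm det}(II^{\nabla^{1,\beta},L})=h^{1,\beta}_{11}h^{1,\beta}_{22}-h^{1,\beta}_{12}h^{1,\beta}_{21}$. Using the asymptotics $\nabla_H(\overline{r_L})\sim L^{-1/2}\nabla_H\!\big(\tfrac{X_3u}{|\nabla_Hu|}\big)$ (as in the proof of Proposition 2.19) and $\widetilde{X_3}(\overline{r_L})\to0$, the two $\sqrt L$-terms in $h^{1,\beta}_{12}$ and $h^{1,\beta}_{21}$ multiply to the divergent $O(L)$ part of the determinant, their cross terms with $\langle e_1,\nabla_H(\overline{r_L})\rangle_L$ deliver the transverse-derivative contribution $-\tfrac{2-\beta}{2}\langle e_1,\nabla_H(\tfrac{X_3u}{|\nabla_Hu|})\rangle$, while $h^{1,\beta}_{11}h^{1,\beta}_{22}$ enters only at lower order (note that, unlike the affine case, $h^{1,\beta}_{22}$ has no $O(1)$ part, matching the mean-curvature formula of Proposition 4.6).

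Finally I would add the two expansions. The main obstacle is purely the asymptotic bookkeeping: one must carry both pieces to sufficiently high order and verify precisely how the $O(L)$ and $O(\sqrt L)$ terms combine, since --- as the analogous computation in Proposition 3.9 shows --- a divergent term need not cancel automatically. After this combination, the $\big(\tfrac{X_3u}{|\nabla_Hu|}\big)^2$ contributions (coming from $\overline{r_L}^2$ paired against the $L^2$-components of $R^{1,\beta}$ in Lemma 4.8) and the $\overline{q}^2-\overline{p}^2$ contributions must be collected, yielding $D_0$ with remainder $O(L^{-2})$; identifying these $O(1)$ coefficients correctly, and confirming the stated cancellation of the divergent part, is the delicate point of the whole argument.
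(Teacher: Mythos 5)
Your strategy is exactly the one the paper intends: the paper gives no proof of this proposition at all, and its proof of the affine analogue (Proposition 2.19) is precisely your route --- the Gauss equation, the ambient term expanded in the frame (2.18) with the components of Lemma 4.8, and $\det(II^{\nabla^{1,\beta},L})$ from Theorem 4.7 with the asymptotics $\overline{r_L}\sim L^{-1/2}\frac{X_3u}{|\nabla_Hu|}$ and $\nabla_H(\overline{r_L})\sim L^{-1/2}\nabla_H(\frac{X_3u}{|\nabla_Hu|})$. Your structural observations are also correct: $h^{1,\beta}_{11}h^{1,\beta}_{22}$ is lower order because $h^{1,\beta}_{22}$ has no $O(1)$ part (consistent with Proposition 4.6), and the $\sqrt L$-terms of $h^{1,\beta}_{12}$, $h^{1,\beta}_{21}$ crossed against $\langle e_1,\nabla_H(\overline{r_L})\rangle_L$ produce $-\frac{2-\beta}{2}\langle e_1,\nabla_H(\frac{X_3u}{|\nabla_Hu|})\rangle$.

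However, the step you defer as ``delicate'' --- confirming cancellation of the divergent part --- would actually \emph{fail} if you substitute the components of Lemma 4.8 as printed, and you should be warned concretely. Since $R^{1,\beta}(X_1,X_3)X_2=R^{1,\beta}(X_2,X_3)X_1=0$ and $R^{1,\beta}(X_1,X_2)X_1$ has no $X_3$-component, the ambient term collapses to
\begin{align}
\langle R^{1,\beta}(e_1,e_2)e_1,e_2\rangle_L&=\overline{r_L}^2\langle R^{1,\beta}(X_1,X_2)X_1,X_2\rangle_L\nonumber\\
&\quad+\frac{l^2}{l_L^2}L^{-1}\left[\overline{q}^2\langle R^{1,\beta}(X_1,X_3)X_1,X_3\rangle_L+\overline{p}^2\langle R^{1,\beta}(X_2,X_3)X_2,X_3\rangle_L\right].
\end{align}
Lemma 4.8 as printed says $R^{1,\beta}(X_2,X_3)X_2=-\frac{-(1-\beta)(L^2+2L+1)}{4L}X_3$, i.e.\ with a double negative it is $+\frac{(1-\beta)(L+1)^2}{4L}X_3$; then the bracket above is $\sim-\frac{(1-\beta)L}{4}(\overline{q}^2-\overline{p}^2)$, and adding the $-\frac{(1-\beta)L}{4}$ coming from $-h^{1,\beta}_{12}h^{1,\beta}_{21}$ leaves a surviving divergence $-\frac{(1-\beta)L}{2}\,\overline{p}^2$, contradicting the statement. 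A direct check from Lemma 4.1 via (2.54) (e.g.\ at $\beta=0$: $R(X_2,X_3)X_2=\nabla_{X_2}\nabla_{X_3}X_2=\frac{L+1}{2}\nabla_{X_2}X_1=-\frac{(L+1)^2}{4L}X_3$) shows the printed sign is a typo; with $R^{1,\beta}(X_2,X_3)X_2=-\frac{(1-\beta)(L+1)^2}{4L}X_3$ the bracket is $\sim-\frac{(1-\beta)L}{4}(\overline{q}^2+\overline{p}^2)=-\frac{(1-\beta)L}{4}$ and the cancellation you predict does occur. So in executing your plan you must recompute Lemma 4.8 from Lemma 4.1 rather than ``substitute the components'' as proposed. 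Two smaller calibration points of the same kind: the remainder actually obtainable is $O(L^{-1/2})$, as in Proposition 2.19 (for instance $h^{1,\beta}_{11}h^{1,\beta}_{22}=O(L^{-1/2})$ does not decay faster), not the printed $O(L^{-2})$; and the last term of $D_0$ should be read as $\frac{(1-\beta)^2+2}{4}\left(\frac{X_3u}{|\nabla_Hu|}\right)^2$ with the coefficient unsquared --- which is exactly what your pairing of $\overline{r_L}^2$ against $\langle R^{1,\beta}(X_1,X_2)X_1,X_2\rangle_L\sim\frac{(1-\beta)^2+2}{4}L$ produces.
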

\begin{thm}
 Let $\Sigma\subset (E(1,1),g_L)$
  be a regular surface with finitely many boundary components $(\partial\Sigma)_i,$ $i\in\{1,\cdots,n\}$, given by Euclidean $C^2$-smooth regular and closed curves $\gamma_i:[0,2\pi]\rightarrow (\partial\Sigma)_i$. Suppose that the characteristic set $C(\Sigma)$ satisfies $\mathcal{H}^1(C(\Sigma))=0$ where $\mathcal{H}^1(C(\Sigma))$ denotes the Euclidean $1$-dimensional Hausdorff measure of $C(\Sigma)$ and that
$||\nabla_Hu||_H^{-1}$ is locally summable with respect to the Euclidean $2$-dimensional Hausdorff measure
near the characteristic set $C(\Sigma)$, then
\begin{equation}
\int_{\Sigma}\mathcal{K}^{\Sigma,\nabla^{1,\beta},\infty}d\sigma_{\Sigma}+\sum_{i=1}^n\int_{\gamma_i}k^{\infty,\nabla^{1,\beta},s}_{\gamma_i,\Sigma}d{s}=0.
\end{equation}
\end{thm}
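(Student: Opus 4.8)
The plan is to follow the argument of Theorem 2.21 essentially verbatim in structure, which is possible precisely because Proposition 4.8 shows that $\mathcal{K}^{\Sigma,\nabla^{1,\beta}}(e_1,e_2)$ converges to the finite limit $D_0$ with no divergent term in $L$ (in contrast to the second-kind connection of Section 3, whose curvature carried a term of order $L$ and forced the two-equation splitting of Theorem 3.10). First I would apply the Riemannian Gauss--Bonnet theorem for the metric connection $\nabla^{1,\beta}$, namely Theorem 2.20, to the surface $(\Sigma,g_L)$ with boundary curves $\gamma_i$, giving
\begin{equation}
\int_{\Sigma}\mathcal{K}^{\Sigma,\nabla^{1,\beta},L}\,d\sigma_{\Sigma,L}+\sum_{i=1}^n\int_{\gamma_i}k^{L,\nabla^{1,\beta},s}_{\gamma_i,\Sigma}\,ds_L=2\pi\chi(\Sigma).
\end{equation}
Dividing by $\sqrt{L}$ and letting $L\to+\infty$ is then the entire strategy.

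Next I would pass to the limit term by term. For the interior term, Proposition 4.8 gives $\mathcal{K}^{\Sigma,\nabla^{1,\beta},L}\to D_0$, while the $E(1,1)$ analogue of (2.68) gives $\tfrac{1}{\sqrt{L}}d\sigma_{\Sigma,L}\to d\sigma_\Sigma$, so that
\begin{equation}
\int_{\Sigma}\mathcal{K}^{\Sigma,\nabla^{1,\beta},L}\frac{1}{\sqrt{L}}\,d\sigma_{\Sigma,L}\longrightarrow\int_{\Sigma}\mathcal{K}^{\Sigma,\nabla^{1,\beta},\infty}\,d\sigma_{\Sigma},
\end{equation}
with $\mathcal{K}^{\Sigma,\nabla^{1,\beta},\infty}=D_0$. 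For the boundary term I would use Lemma 4.6 together with the length-measure asymptotics (2.65)--(2.67): where $\omega(\dot\gamma_i)\neq 0$, case (1) of Lemma 4.6 gives $k^{L,\nabla^{1,\beta},s}_{\gamma_i,\Sigma}\to k^{\infty,\nabla^{1,\beta},s}_{\gamma_i,\Sigma}$ and $\tfrac{1}{\sqrt{L}}ds_L\to ds$; where $\omega(\dot\gamma_i)=0$ with $\tfrac{d}{dt}\omega(\dot\gamma_i)=0$, case (2) gives limit $0$ and $ds$ vanishes as well. Meanwhile the right-hand side $2\pi\chi(\Sigma)/\sqrt{L}$ tends to $0$.

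The delicate points are those where $\omega(\dot\gamma_i)=0$ but $\tfrac{d}{dt}\omega(\dot\gamma_i)\neq 0$: there case (3) of Lemma 4.6 shows $k^{L,\nabla^{1,\beta},s}_{\gamma_i,\Sigma}$ blows up like $\sqrt{L}$. The key observation, which I would borrow from the discussion in \cite{BTV1}, is that such points are finite in number along each $\gamma_i$ and hence form a null set for the boundary integration, so their contribution can be discarded; consequently $\int_{\gamma_i}k^{L,\nabla^{1,\beta},s}_{\gamma_i,\Sigma}\tfrac{1}{\sqrt{L}}ds_L\to\int_{\gamma_i}k^{\infty,\nabla^{1,\beta},s}_{\gamma_i,\Sigma}\,ds$.

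The main obstacle is rigorously justifying these interchanges of limit and integral, and this is where the work lies. I would first assume $C(\Sigma)=\emptyset$, so that the integrands admit a uniform dominating function and the Lebesgue dominated convergence theorem applies, exactly as in the proof of Theorem 3.10. Combining the three limits then yields
\begin{equation}
\int_{\Sigma}\mathcal{K}^{\Sigma,\nabla^{1,\beta},\infty}\,d\sigma_{\Sigma}+\sum_{i=1}^n\int_{\gamma_i}k^{\infty,\nabla^{1,\beta},s}_{\gamma_i,\Sigma}\,ds=0.
\end{equation}
Finally, following the argument on p.~27 of \cite{BTV}, I would relax the hypothesis $C(\Sigma)=\emptyset$ to the stated conditions $\mathcal{H}^1(C(\Sigma))=0$ together with local summability of $||\nabla_Hu||_H^{-1}$ near $C(\Sigma)$, which controls the integrals in a neighborhood of the characteristic set and completes the proof of Theorem 4.9.
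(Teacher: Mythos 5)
Your proposal is correct and follows exactly the route the paper intends for this theorem: apply the metric-connection Gauss--Bonnet formula (Theorem 2.20), divide by $\sqrt{L}$, pass to the limit using the curvature asymptotics (which here converge to $D_0$ with no divergent $L$-term, so no splitting as in Theorem 3.10 is needed), the signed geodesic curvature limits, the measure asymptotics, the finiteness of points with $\omega(\dot{\gamma}_i)=0$ and $\frac{d}{dt}\omega(\dot{\gamma}_i)\neq 0$ from \cite{BTV1}, dominated convergence under the assumption $C(\Sigma)=\emptyset$, and finally the relaxation of that assumption via the argument of \cite{BTV}. The only blemishes are harmless off-by-one citation labels (the curvature limit is Proposition 4.9 and the signed-curvature lemma is Lemma 4.5 in the paper's numbering), which do not affect the substance.
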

\section{Gauss-Bonnet theorems associated to the second kind of deformed Schouten-Van Kampen connection in the group of rigid motions of the Minkowski plane}
\indent Let $H^2={\rm span}\{X_2,X_3\}$ be the second kind of horizontal distribution on $E(1,1)$, then $H^{2,\bot}={\rm span}\{X_1\}$.\\
Nextly, we define the second kind of deformed Schouten-Van Kampen connection which is a metric connection in the group of rigid motions of the Minkowski plane:
\begin{align}
\nabla^{2,\beta}_XY&=(1-\beta)\nabla_XY+\beta\nabla^{2,\beta,s}_XY\nonumber\\
&=(1-\beta)\nabla_XY+\beta P^2\nabla_X{P^2Y}+\beta P^{2,\bot}\nabla_X{P^{2,\bot} Y},\nonumber\\
\end{align}
where $\beta$ is a constant and $P^2$(resp.$P^{2,\bot}$) be the projection on $H^2$ (resp.$H^{2,\bot}$).\\
 By lemma 5.1 in \cite{YS} and (5.1), we have the following lemma
\begin{lem}
Let $E(1,1)$ be the group of rigid motions of the Minkowski plane, then
\begin{align}
&\nabla^{2,\beta}_{X_1}X_1=0,~~~\nabla^{2,\beta}_{X_1}X_2=\frac{L-1}{2L}X_3,~~~ \nabla^{2,\beta}_{X_1}X_3=\frac{1-L}{2}X_2,\nonumber\\
&\nabla^{2,\beta}_{X_2}X_1=-\frac{(-L-1)(1-\beta)}{2L}X_3,~~~\nabla^{2,\beta}_{X_2}X_2=0,~~~\nabla^{2,\beta}_{X_2}X_3=\frac{(1-\beta)(L+1)}{2}X_1,\nonumber\\
&\nabla^{2,\beta}_{X_3}X_1=-\frac{(-1-L)(1-\beta)}{2}X_2,~~~\nabla^{2,\beta}_{X_3}X_2=\frac{(1-\beta)(L+1)}{2}X_1,~~\nabla^{2,\beta}_{X_3}X_3=0.\nonumber\\
\end{align}
\end{lem}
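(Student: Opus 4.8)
The plan is to reduce everything to the Levi-Civita connection $\nabla$ of $(E(1,1),g_L)$, whose nine values $\nabla_{X_i}X_j$ are recorded in Lemma 5.1 of \cite{YS}, and then to apply the defining formula (5.1) term by term. First I would fix the $g_L$-orthogonal splitting $TE(1,1)=H^2\oplus H^{2,\bot}$ with $H^2={\rm span}\{X_2,X_3\}$ and $H^{2,\bot}={\rm span}\{X_1\}$. Since $X_1,X_2,\widetilde{X_3}=L^{-\frac{1}{2}}X_3$ are $g_L$-orthonormal, the projection $P^2$ simply deletes the $X_1$-component of a vector while $P^{2,\bot}$ retains only that component. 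The key structural observation is that every nonzero entry of the Levi-Civita table is a pure multiple of a single frame vector $X_k$, so each projection acts as either the identity or the zero map; no genuine decomposition of a mixed vector is ever needed.

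Next I would split the nine cases according to which distribution the second argument lies in. For $Y\in\{X_2,X_3\}\subset H^2$ one has $P^2Y=Y$ and $P^{2,\bot}Y=0$, so (5.1) collapses to $\nabla^{2,\beta}_XY=(1-\beta)\nabla_XY+\beta P^2\nabla_XY$; for $Y=X_1\in H^{2,\bot}$ it collapses to $\nabla^{2,\beta}_XX_1=(1-\beta)\nabla_XX_1+\beta P^{2,\bot}\nabla_XX_1$. Reading off from the table whether $\nabla_XY$ lands in $H^2$ or in $H^{2,\bot}$ then makes each coefficient explicit via a single rule: the part of $\nabla_XY$ lying in the same distribution as $Y$ survives with coefficient $(1-\beta)+\beta=1$, whereas the part transverse to that distribution is rescaled by $(1-\beta)$. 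Applying this dichotomy to all nine pairs $(X_i,X_j)$ directly yields the stated expressions, with the diagonal-type entries ($\nabla^{2,\beta}_{X_1}X_2$, $\nabla^{2,\beta}_{X_1}X_3$, $\nabla^{2,\beta}_{X_2}X_3$, $\nabla^{2,\beta}_{X_3}X_2$) keeping their Levi-Civita coefficients and the transverse-type entries acquiring the factor $(1-\beta)$.

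The computation is essentially bookkeeping, so the only delicate points are the conversions between $X_3$ and the normalized field $\widetilde{X_3}$ — that is, keeping the powers of $L^{\frac{1}{2}}$ consistent when the Levi-Civita values carry factors such as $\tfrac{L\pm1}{2L}$ versus $\tfrac{L\pm1}{2}$ — and the sign bookkeeping in the off-diagonal entries $\nabla^{2,\beta}_{X_2}X_1$ and $\nabla^{2,\beta}_{X_3}X_1$, where the transverse part of the Levi-Civita derivative is negative before being multiplied by $(1-\beta)$. This is the step where an error is most likely to creep in, so I would double-check those two entries against the two limiting regimes: the $\beta=0$ limit, in which $\nabla^{2,\beta}$ must reduce to the Levi-Civita connection of \cite{YS}, and the $\beta=1$ limit, in which it must reduce to the pure Schouten-Van Kampen connection $\nabla^{2,\beta,s}$ that kills the transverse components entirely.
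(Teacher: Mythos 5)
Your method is exactly the paper's own derivation: the paper proves this lemma with the single line ``By lemma 5.1 in \cite{YS} and (5.1)'', and your projection dichotomy --- the component of $\nabla_XY$ lying in the same distribution as $Y$ survives with coefficient $(1-\beta)+\beta=1$, the transverse component is rescaled by $(1-\beta)$ --- is the correct and complete way to execute that one-liner, since (as you observe) every Levi-Civita entry is a pure multiple of a single frame vector. Seven of the nine entries come out as stated under your rule.

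The problem is your final assertion that the dichotomy ``directly yields the stated expressions'': it fails for precisely the two entries you yourself flagged as delicate, and the $\beta=0$ and metric-compatibility checks you announce are never actually carried out --- had you run them, you would have found that the lemma as printed is wrong there. Torsion-freeness of the Levi-Civita connection with $[X_1,X_2]=X_3$ and $[X_1,X_3]=X_2$ forces $\nabla_{X_2}X_1=\nabla_{X_1}X_2-X_3=-\frac{L+1}{2L}X_3$ and $\nabla_{X_3}X_1=\nabla_{X_1}X_3-X_2=-\frac{L+1}{2}X_2$ (the paper's own Lemma 4.1 at $\beta=0$ confirms both signs). Since these outputs are transverse to $X_1\in H^{2,\bot}$, your rule gives
\begin{equation*}
\nabla^{2,\beta}_{X_2}X_1=\frac{(-L-1)(1-\beta)}{2L}X_3,\qquad
\nabla^{2,\beta}_{X_3}X_1=\frac{(-1-L)(1-\beta)}{2}X_2,
\end{equation*}
that is, \emph{without} the extra leading minus signs in the statement, which as printed evaluate to $+\frac{(L+1)(1-\beta)}{2L}X_3$ and $+\frac{(L+1)(1-\beta)}{2}X_2$. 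The printed signs also violate metric compatibility against the (correct) entry $\nabla^{2,\beta}_{X_2}X_3=\frac{(1-\beta)(L+1)}{2}X_1$: one needs $\langle\nabla^{2,\beta}_{X_2}X_1,X_3\rangle_L+\langle X_1,\nabla^{2,\beta}_{X_2}X_3\rangle_L=0$, which holds only with the negative sign, so a connection with the printed values would not be a metric connection as claimed. In short: your approach is sound and identical to the paper's, but your blanket endorsement of the stated formulas is incorrect for $\nabla^{2,\beta}_{X_2}X_1$ and $\nabla^{2,\beta}_{X_3}X_1$; a completed proof must actually perform the verification you sketch, which exposes these two signs as typos to be corrected.
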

Then, we have
\begin{lem}
Let $\gamma:[a,b]\rightarrow (E(1,1),g_L)$ be a Euclidean $C^2$-smooth regular curve in the Riemannian manifold $(E(1,1),g_L)$. Then,
\begin{align}
k^{L,\nabla^{2,\beta}}_{\gamma}&=\Bigg\{\Bigg\{\left[\ddot{\gamma}_3+\frac{\sqrt{2}(L+1)(1-\beta)}{2}\omega(\dot{\gamma}(t))\left(-e^{-\gamma_3}\dot{\gamma_1}+e^{\gamma_3}\dot{\gamma_2}\right)\right]^2\nonumber\\
&+\left[\frac{\sqrt{2}}{2}(\ddot{\gamma}_2e^{\gamma_3}+\dot{\gamma}_2\dot{\gamma}_3e^{\gamma_3}-\ddot{\gamma}_1e^{-\gamma_3}+\dot{\gamma}_1\dot{\gamma}_3e^{-\gamma_3})+\frac{\beta L+\beta-2L}{2}\omega(\dot{\gamma}(t))\dot{\gamma}_3\right]^2\nonumber\\
&+L\left[\frac{d}{dt}(\omega(\dot{\gamma}(t)))-\frac{\sqrt{2}}{2L}\left(-e^{-\gamma_3}\dot{\gamma_1}+e^{\gamma_3}\dot{\gamma_2}\right)\dot{\gamma}_3\right]^2\Bigg\}
\cdot\left[\dot{\gamma}_3^2+\frac{1}{2}\left(-e^{-\gamma_3}\dot{\gamma_1}+e^{\gamma_3}\dot{\gamma_2}\right)+L(\omega(\dot{\gamma}(t)))^2\right]^{-2}\nonumber\\
&-\Bigg\{\dot{\gamma}_3\left[\ddot{\gamma}_3+\frac{\sqrt{2}(L+1)(1-\beta)}{2}\omega(\dot{\gamma}(t))\left(-e^{-\gamma_3}\dot{\gamma_1}+e^{\gamma_3}\dot{\gamma_2}\right)\right]+\frac{\sqrt{2}}{2}\left(-e^{-\gamma_3}\dot{\gamma_1}+e^{\gamma_3}\dot{\gamma_2}\right)\nonumber\\
&\left[\frac{\sqrt{2}}{2}(\ddot{\gamma}_2e^{\gamma_3}+\dot{\gamma}_2\dot{\gamma}_3e^{\gamma_3}-\ddot{\gamma}_1e^{-\gamma_3}+\dot{\gamma}_1\dot{\gamma}_3e^{-\gamma_3})+\frac{\beta L+\beta-2L}{2}\omega(\dot{\gamma}(t))\dot{\gamma}_3\right]\nonumber\\
&+L\omega(\dot{\gamma}(t))\left[\frac{d}{dt}(\omega(\dot{\gamma}(t)))-\frac{\sqrt{2}}{2L}\left(-e^{-\gamma_3}\dot{\gamma_1}+e^{\gamma_3}\dot{\gamma_2}\right)\dot{\gamma}_3\right]\Bigg\}^2\nonumber\\
&\cdot\left[\dot{\gamma}_3^2+\frac{1}{2}\left(-e^{-\gamma_3}\dot{\gamma_1}+e^{\gamma_3}\dot{\gamma_2}\right)+L(\omega(\dot{\gamma}(t)))^2\right]^{-3}\Bigg\}^{\frac{1}{2}}.\nonumber\\
\end{align}
When $\omega(\dot{\gamma}(t))=0$, we have\\
\begin{align}
k^{L,\nabla^{2,\beta}}_{\gamma}&=\Bigg\{\Bigg\{\ddot{\gamma}_3^2+\frac{1}{2}(\ddot{\gamma}_2e^{\gamma_3}+\dot{\gamma}_2\dot{\gamma}_3e^{\gamma_3}-\ddot{\gamma}_1e^{-\gamma_3}+\dot{\gamma}_1\dot{\gamma}_3e^{-\gamma_3})^2\nonumber\\
&+L\left[\frac{d}{dt}(\omega(\dot{\gamma}(t)))-\frac{\sqrt{2}}{2L}\left(-e^{-\gamma_3}\dot{\gamma_1}+e^{\gamma_3}\dot{\gamma_2}\right)\dot{\gamma}_3\right]^2\Bigg\}\cdot\left[\dot{\gamma}_3^2\frac{1}{2}\left(-e^{-\gamma_3}\dot{\gamma_1}+e^{\gamma_3}\dot{\gamma_2}\right)\right]^{-2}\nonumber\\
&-\Bigg\{\dot{\gamma}_3\ddot{\gamma}_3+\frac{1}{2}\left(-e^{-\gamma_3}\dot{\gamma_1}+e^{\gamma_3}\dot{\gamma_2}\right)(\ddot{\gamma}_2e^{\gamma_3}+\dot{\gamma}_2\dot{\gamma}_3e^{\gamma_3}-\ddot{\gamma}_1e^{-\gamma_3}+\dot{\gamma}_1\dot{\gamma}_3e^{-\gamma_3})\Bigg\}\nonumber\\
&\cdot\left[\dot{\gamma}_3^2+\frac{1}{2}\left(-e^{-\gamma_3}\dot{\gamma_1}+e^{\gamma_3}\dot{\gamma_2}\right)^2\right]^{-3}\Bigg\}^{\frac{1}{2}}.\nonumber\\
\end{align}
\end{lem}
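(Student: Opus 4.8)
The plan is to compute $k^{L,\nabla^{2,\beta}}_{\gamma}$ directly from its defining formula (the analogue of Definition~2.3 and equation (2.7) for the connection $\nabla^{2,\beta}$),
\[
k^{L,\nabla^{2,\beta}}_{\gamma}=\sqrt{\frac{\|\nabla^{2,\beta}_{\dot\gamma}\dot\gamma\|_L^2}{\|\dot\gamma\|_L^4}-\frac{\langle\nabla^{2,\beta}_{\dot\gamma}\dot\gamma,\dot\gamma\rangle_L^2}{\|\dot\gamma\|_L^6}},
\]
so the whole proof reduces to producing $\nabla^{2,\beta}_{\dot\gamma}\dot\gamma$ explicitly and then reading off the three scalar quantities $\|\nabla^{2,\beta}_{\dot\gamma}\dot\gamma\|_L^2$, $\|\dot\gamma\|_L^2$ and $\langle\nabla^{2,\beta}_{\dot\gamma}\dot\gamma,\dot\gamma\rangle_L$. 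First I would rewrite $\dot\gamma$ in the frame $\{X_1,X_2,X_3\}$: by (4.3) this is exactly (4.8), namely $\dot\gamma=\dot\gamma_3X_1+\frac{\sqrt2}{2}(-e^{-\gamma_3}\dot\gamma_1+e^{\gamma_3}\dot\gamma_2)X_2+\omega(\dot\gamma(t))X_3$, with $\omega(\dot\gamma(t))=-\frac{\sqrt2}{2}(e^{-\gamma_3}\dot\gamma_1+e^{\gamma_3}\dot\gamma_2)$, since the underlying frame of $E(1,1)$ is unchanged from Section~4.

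Next I would compute the three covariant derivatives $\nabla^{2,\beta}_{\dot\gamma}X_i$ by expanding $\dot\gamma$ in the frame and invoking the bilinearity of $\nabla^{2,\beta}$ together with the nine connection coefficients of Lemma~5.1; this gives, for each $i$, a vector whose $X_1,X_2,X_3$ components are linear combinations of $\dot\gamma_3$, $\frac{\sqrt2}{2}(-e^{-\gamma_3}\dot\gamma_1+e^{\gamma_3}\dot\gamma_2)$ and $\omega(\dot\gamma(t))$ weighted by the structure constants. Then, applying the Leibniz rule to $\nabla^{2,\beta}_{\dot\gamma}\dot\gamma=\nabla^{2,\beta}_{\dot\gamma}(\dot\gamma_3X_1+\cdots)$, I would differentiate the three scalar coefficient functions in $t$ (the $t$-derivative of the middle coefficient produces precisely $\frac{\sqrt2}{2}(\ddot\gamma_2e^{\gamma_3}+\dot\gamma_2\dot\gamma_3e^{\gamma_3}-\ddot\gamma_1e^{-\gamma_3}+\dot\gamma_1\dot\gamma_3e^{-\gamma_3})$) and add the frame-derivative contributions just computed. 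Collecting the $X_1$, $X_2$ and $X_3$ components yields the three bracketed expressions appearing in the statement.

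Finally I would substitute into the curvature formula using the orthonormality of $X_1,X_2,\widetilde{X_3}=L^{-1/2}X_3$, i.e.\ that for any $uX_1+vX_2+wX_3$ one has $\|uX_1+vX_2+wX_3\|_L^2=u^2+v^2+Lw^2$ and that the $X_i$ are mutually $g_L$-orthogonal. This turns $\|\nabla^{2,\beta}_{\dot\gamma}\dot\gamma\|_L^2$ into the sum of the squares of the $X_1$- and $X_2$-components plus $L$ times the square of the $X_3$-component, gives $\|\dot\gamma\|_L^2=\dot\gamma_3^2+\frac12(-e^{-\gamma_3}\dot\gamma_1+e^{\gamma_3}\dot\gamma_2)^2+L\omega(\dot\gamma(t))^2$, and yields $\langle\nabla^{2,\beta}_{\dot\gamma}\dot\gamma,\dot\gamma\rangle_L$ as the component-by-component pairing with the extra weight $L$ on the $X_3$ slot; assembling these produces the first displayed formula, and setting $\omega(\dot\gamma(t))=0$ throughout collapses it to the second. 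The main obstacle is purely organizational: because several coefficients of Lemma~5.1 already carry explicit factors of $L$ (e.g.\ $\frac{(1-\beta)(L+1)}{2}$) while the metric weights the $X_3$ slot by $L$, one must track the powers of $L$ in every cross term so that the $L$-weighted $X_3$ contributions land correctly in the numerators; no conceptual difficulty arises, but the bookkeeping is where sign and $L$-power errors are easiest to make.
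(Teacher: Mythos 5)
Your proposal is correct and is precisely the paper's own method: the paper proves the Section~4 analogue (Lemma~4.2) by expanding $\dot\gamma$ in the frame as in (4.8), computing $\nabla_{\dot\gamma}X_i$ from the connection coefficients, applying the Leibniz rule to obtain $\nabla_{\dot\gamma}\dot\gamma$, and substituting into the defining formula (2.7) using the $g_L$-orthonormal frame $X_1,X_2,\widetilde{X_3}=L^{-1/2}X_3$, and it tacitly omits the proof of the present lemma as the identical computation with Lemma~5.1 in place of Lemma~4.1. Your closing caution about sign and $L$-power bookkeeping is well placed: the coefficients of Lemma~5.1 as printed are not all metric-compatible (e.g.\ $\langle\nabla^{2,\beta}_{X_3}X_1,X_2\rangle_L+\langle X_1,\nabla^{2,\beta}_{X_3}X_2\rangle_L\neq 0$), so a faithful execution of your plan should recompute them directly from (5.1) before collecting components.
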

\begin{lem}
Let $\gamma:[a,b]\rightarrow (E(1,1),g_L)$ be a Euclidean $C^2$-smooth regular curve in the Riemannian manifold $(E(1,1),g_L)$. Then\\
(1)when $\omega(\dot{\gamma}(t))\neq 0$,\\
\begin{equation}
k_{\gamma}^{\infty,\nabla^{2,\beta}}=\frac{\sqrt{\left[(1-\beta)\left(-e^{-\gamma_3}\dot{\gamma_1}+e^{\gamma_3}\dot{\gamma_2}\right)\right]^2+(2-\beta)^2\dot{\gamma}_3^2}}{|2\omega(\dot{\gamma}(t))|},
\end{equation}
(2)when $\omega(\dot{\gamma}(t))= 0 ~~~and~~~\frac{d}{dt}(\omega(\dot{\gamma}(t)))=0$,\\
\begin{align}
k^{\infty,\nabla^{2,\beta}}_{\gamma}&=\Bigg\{\left\{\dot{\gamma}_3^2+\frac{1}{2}(\ddot{\gamma}_2e^{\gamma_3}+\dot{\gamma}_2\dot{\gamma}_3e^{\gamma_3}-\ddot{\gamma}_1e^{-\gamma_3}+\dot{\gamma}_1\dot{\gamma}_3e^{-\gamma_3})^2\right\}
\cdot\left[\dot{\gamma}_3^2+\frac{1}{2}\left(-e^{-\gamma_3}\dot{\gamma_1}+e^{\gamma_3}\dot{\gamma_2}\right)^2\right]^{-2}\nonumber\\
&-\Bigg\{\dot{\gamma_3}\ddot{\gamma}_3+\frac{1}{2}\left(-e^{-\gamma_3}\dot{\gamma_1}+e^{\gamma_3}\dot{\gamma_2}\right)\left(\ddot{\gamma}_2e^{\gamma_3}+\dot{\gamma}_2\dot{\gamma}_3e^{\gamma_3}-\ddot{\gamma}_1e^{-\gamma_3}+\dot{\gamma}_1\dot{\gamma}_3e^{-\gamma_3}\right)\Bigg\}^2\nonumber\\
&\cdot\left[\dot{\gamma}_3^2+\frac{1}{2}\left(-e^{-\gamma_3}\dot{\gamma_1}+e^{\gamma_3}\dot{\gamma_2}\right)^2\right]^{-3}\Bigg\}^{\frac{1}{2}},\nonumber\\
\end{align}
(3)when $\omega(\dot{\gamma}(t))= 0 ~~~and~~~\frac{d}{dt}(\omega(\dot{\gamma}(t)))\neq0$,\\
\begin{equation}
{\rm lim}_{L\rightarrow +\infty}\frac{k_{\gamma}^{L,\nabla^{2,\beta}}}{\sqrt{L}}=\frac{|\frac{d}{dt}(\omega(\dot{\gamma}(t)))|}{\dot{\gamma}_3^2+\frac{1}{2}\left(-e^{-\gamma_3}\dot{\gamma_1}+e^{\gamma_3}\dot{\gamma_2}\right)^2}.
\end{equation}
\end{lem}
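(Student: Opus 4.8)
The plan is to run the same Riemannian-approximation argument already used for $\nabla^{1,\alpha}$, $\nabla^{2,\alpha}$ and $\nabla^{1,\beta}$ in Lemmas 2.6, 3.3 and 4.3, now specialized to the second-kind connection on $E(1,1)$. First I would record, from Lemma 5.2 (equivalently from its underlying frame decomposition, analogous to (4.10)), the expansion $\nabla^{2,\beta}_{\dot{\gamma}}\dot{\gamma}=A_1X_1+A_2X_2+A_3X_3$, where $A_1,A_2,A_3$ are exactly the bracketed coefficients of $X_1,X_2,X_3$ appearing in the statement of Lemma 5.2, together with the expansion $\dot{\gamma}=\dot{\gamma}_3X_1+\tfrac{\sqrt2}{2}u\,X_2+w\,X_3$ from (4.8), where for brevity I write $w:=\omega(\dot{\gamma}(t))$ and $u:=-e^{-\gamma_3}\dot{\gamma}_1+e^{\gamma_3}\dot{\gamma}_2$. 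Since $X_1,X_2,\widetilde{X_3}=L^{-1/2}X_3$ is $g_L$-orthonormal, so that $||X_3||_L^2=L$, I then have
\[
||\nabla^{2,\beta}_{\dot{\gamma}}\dot{\gamma}||_L^2=A_1^2+A_2^2+LA_3^2,\qquad ||\dot{\gamma}||_L^2=\dot{\gamma}_3^2+\tfrac12u^2+Lw^2,
\]
\[
\langle\nabla^{2,\beta}_{\dot{\gamma}}\dot{\gamma},\dot{\gamma}\rangle_L=A_1\dot{\gamma}_3+\tfrac{\sqrt2}{2}A_2u+LA_3w,
\]
and I substitute these three scalars into the curvature definition (2.7).

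The three cases then correspond to three different dominant balances as $L\to+\infty$. When $w\neq0$, both $A_1$ and $A_2$ are $O(L)$, with leading parts $\tfrac{\sqrt2(1-\beta)}{2}Lwu$ and $-\tfrac{2-\beta}{2}Lw\dot{\gamma}_3$ respectively, while $A_3\to\tfrac{d}{dt}w$ stays bounded; hence $||\nabla^{2,\beta}_{\dot{\gamma}}\dot{\gamma}||_L^2\sim L^2w^2\big[\tfrac{(1-\beta)^2}{2}u^2+\tfrac{(2-\beta)^2}{4}\dot{\gamma}_3^2\big]$ and $||\dot{\gamma}||_L^4\sim L^2w^4$, whereas the cross term is only $O(L)$ and is therefore annihilated by $||\dot{\gamma}||_L^6\sim L^3w^6$; assembling these in (2.7) and taking the limit gives case (1). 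When $w=0$ the $O(L)$ parts of $A_1,A_2$ drop out, leaving $A_1=\ddot{\gamma}_3$ and $A_2=\tfrac{\sqrt2}{2}(\ddot{\gamma}_2e^{\gamma_3}+\dot{\gamma}_2\dot{\gamma}_3e^{\gamma_3}-\ddot{\gamma}_1e^{-\gamma_3}+\dot{\gamma}_1\dot{\gamma}_3e^{-\gamma_3})$ bounded, and the only $L$-sensitive piece is $LA_3^2=L[\tfrac{d}{dt}w-\tfrac{\sqrt2}{2L}u\dot{\gamma}_3]^2$. If in addition $\tfrac{d}{dt}w=0$ this term is $O(L^{-1})\to0$, so every occurrence of $L$ disappears and $k^{L,\nabla^{2,\beta}}_{\gamma}$ converges to the $L$-free expression of case (2); if instead $\tfrac{d}{dt}w\neq0$, then $LA_3^2\sim L(\tfrac{d}{dt}w)^2$ dominates the bounded $A_1^2+A_2^2$, while $||\dot{\gamma}||_L$ is $L$-independent and the cross term is $O(1)$ because $w=0$ kills $LA_3w$, so dividing by $\sqrt L$ and letting $L\to+\infty$ produces case (3).

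The main obstacle is purely the asymptotic bookkeeping, exactly as in Lemma 4.3. The two points that need genuine care are: first, verifying in case (1) that $\langle\nabla^{2,\beta}_{\dot{\gamma}}\dot{\gamma},\dot{\gamma}\rangle_L$ is only $O(L)$ and not of higher order, since each $A_i$ is at most $O(L)$ and $u,w,\dot{\gamma}_3$ are $O(1)$, so that its square is negligible after division by $||\dot{\gamma}||_L^6$; and second, correctly tracking the single $X_3$-component $A_3$, whose order ($O(L^{-1})$ when $\tfrac{d}{dt}w=0$ versus $O(1)$ when $\tfrac{d}{dt}w\neq0$) is precisely what discriminates cases (2) and (3) and generates the $\sqrt L$-blow-up in (3). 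Everything else reduces to Taylor expansion of the factors $\big[\dot{\gamma}_3^2+\tfrac12u^2+Lw^2\big]^{-2}$ and $\big[\cdots\big]^{-3}$ in powers of $L^{-1}$, entirely analogous to the $\nabla^{1,\beta}$ computation already carried out.
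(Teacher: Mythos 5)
Your proposal follows precisely the route the paper itself intends: the paper prints no separate proof of this lemma, deferring (as with Lemmas 3.3 and 4.3) to the argument given in full for Lemma 2.6, namely expand $\nabla^{2,\beta}_{\dot{\gamma}}\dot{\gamma}=A_1X_1+A_2X_2+A_3X_3$ with the bracketed coefficients of Lemma 5.2, feed the three scalars $\|\nabla^{2,\beta}_{\dot{\gamma}}\dot{\gamma}\|_L^2=A_1^2+A_2^2+LA_3^2$, $\|\dot{\gamma}\|_L^2=\dot{\gamma}_3^2+\tfrac12u^2+Lw^2$ and $\langle\nabla^{2,\beta}_{\dot{\gamma}}\dot{\gamma},\dot{\gamma}\rangle_L=A_1\dot{\gamma}_3+\tfrac{\sqrt2}{2}A_2u+LA_3w$ into definition (2.7), and extract the dominant balance in the three regimes. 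Your handling of the two delicate points is exactly right and matches the paper's model computation: when $w\neq0$ the cross term is only $O(L)$, so its square dies against $\|\dot{\gamma}\|_L^6\sim L^3w^6$; and the order of $A_3$ (namely $O(L^{-1})$ when $\tfrac{d}{dt}w=0$ versus $O(1)$ otherwise, since $A_3=\tfrac{d}{dt}w-\tfrac{\sqrt2}{2L}u\dot{\gamma}_3$) is what separates cases (2) and (3) and produces the $\sqrt L$ blow-up.

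One caveat you should not gloss over: your (correct) asymptotics in case (1) do not literally reproduce the printed formula. From Lemma 5.2 one has $A_1\sim\tfrac{\sqrt2(1-\beta)}{2}Lwu$, hence $A_1^2\sim\tfrac{(1-\beta)^2}{2}L^2w^2u^2$, and assembling your own displayed leading terms gives
\begin{equation*}
k_{\gamma}^{\infty,\nabla^{2,\beta}}=\frac{\sqrt{2(1-\beta)^2u^2+(2-\beta)^2\dot{\gamma}_3^2}}{2|w|},
\end{equation*}
which carries an extra factor $2$ on the $u^2$-term inside the radical relative to the statement's case (1). For comparison, in Lemma 4.4 the analogous coefficient $\tfrac{\sqrt2(L+1)(2-\beta)}{4}$ squares to the printed $\tfrac18(2-\beta)^2$, whereas here $\tfrac{\sqrt2(L+1)(1-\beta)}{2}$ squares to $\tfrac12(1-\beta)^2$, not the $\tfrac14(1-\beta)^2$ implicit in (5.5); so either Lemma 5.2's $X_1$-coefficient or the statement of case (1) contains a typo, and your closing claim that the assembly ``gives case (1)'' silently absorbs this discrepancy rather than resolving it. The same remark applies to case (2): your $A_1=\ddot{\gamma}_3$ correctly produces $\ddot{\gamma}_3^2$ in the first brace of the limit, while the statement prints $\dot{\gamma}_3^2$ (a slip the paper also carries in Lemma 4.4). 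Your method is sound and identical to the paper's; you should simply record these two constant-level mismatches explicitly instead of asserting exact agreement with the printed formulas.
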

For every $U,V\in T\Sigma$, we define $\nabla^{\Sigma,{2,\beta}}_UV=\pi \nabla^{2,\beta}_UV$ where $\pi:TE(1,1)\rightarrow T\Sigma$ is the projection. Then $\nabla^{\Sigma,{2,\beta}}$ is the Levi-Civita connection on $\Sigma$
with respect to the metric $g_L$. By (2.18) and
\begin{equation}
\nabla^{\Sigma,{2,\beta}}_{\dot{\gamma}}\dot{\gamma}=\langle \nabla^{2,\beta}_{\dot{\gamma}}\dot{\gamma},e_1\rangle_Le_1+\langle \nabla^{2,\beta}_{\dot{\gamma}}\dot{\gamma},e_2\rangle_Le_2,
\end{equation}
we have
\begin{align}
\nabla^{\Sigma,{2,\beta}}_{\dot{\gamma}}\dot{\gamma}&=
\Bigg\{\overline{q}\left[\ddot{\gamma}_3+\frac{\sqrt{2}(L+1)(1-\beta)}{2}\left(-e^{-\gamma_3}\dot{\gamma_1}+e^{\gamma_3}\dot{\gamma_2}\right)\omega(\dot{\gamma}(t))\right]\nonumber\\
&-\overline{p}\left[\frac{\sqrt{2}}{2}(\ddot{\gamma}_2e^{\gamma_3}+\dot{\gamma}_2\dot{\gamma}_3e^{\gamma_3}-\ddot{\gamma}_1e^{-\gamma_3}+\dot{\gamma}_1\dot{\gamma}_3e^{-\gamma_3})+\frac{\beta L+\beta-2L}{2}\omega(\dot{\gamma}(t))\dot{\gamma}_3\right]\Bigg\}e_1\nonumber\\
&+\Bigg\{\overline{r_L}~~\overline{p}\left[\ddot{\gamma}_3+\frac{\sqrt{2}(L+1)(1-\beta)}{2}\left(-e^{-\gamma_3}\dot{\gamma_1}+e^{\gamma_3}\dot{\gamma_2}\right)\omega(\dot{\gamma}(t))\right]\nonumber\\
&+\overline{r_L}~~ \overline{q}\left[\frac{\sqrt{2}}{2}(\ddot{\gamma}_2e^{\gamma_3}+\dot{\gamma}_2\dot{\gamma}_3e^{\gamma_3}-\ddot{\gamma}_1e^{-\gamma_3}+\dot{\gamma}_1\dot{\gamma}_3e^{-\gamma_3})+\frac{\beta L+\beta-2L}{2}\omega(\dot{\gamma}(t))\dot{\gamma}_3\right]\nonumber\\
&-\frac{l}{l_L}L^{\frac{1}{2}}\left[\frac{d}{dt}(\omega(\dot{\gamma}(t)))-\frac{\sqrt{2}}{2L}\left(-e^{-\gamma_3}\dot{\gamma_1}+e^{\gamma_3}\dot{\gamma_2}\right)\dot{\gamma}_3\right]\Bigg\}e_2.\nonumber\\
\end{align}
Moreover if $\omega(\dot{\gamma}(t))=0$, then
\begin{align}
\nabla^{\Sigma,\nabla^{2,\beta}}_{\dot{\gamma}}\dot{\gamma}&=
\Bigg\{\overline{q}\dot{\gamma}_3-\frac{\sqrt{2}\overline{p}}{2}(\ddot{\gamma}_2e^{\gamma_3}+\dot{\gamma}_2\dot{\gamma}_3e^{\gamma_3}-\ddot{\gamma}_1e^{-\gamma_3}+\dot{\gamma}_1\dot{\gamma}_3e^{-\gamma_3})\Bigg\}e_1\nonumber\\
&+\Bigg\{\overline{r_L}~~\overline{p}\dot{\gamma}_3+\overline{r_L}~~ \overline{q}
\frac{\sqrt{2}}{2}(\ddot{\gamma}_2e^{\gamma_3}+\dot{\gamma}_2\dot{\gamma}_3e^{\gamma_3}-\ddot{\gamma}_1e^{-\gamma_3}+\dot{\gamma}_1\dot{\gamma}_3e^{-\gamma_3})\nonumber\\
&-\frac{l}{l_L}L^{\frac{1}{2}}\left[\frac{d}{dt}(\omega(\dot{\gamma}(t)))-\frac{\sqrt{2}}{2L}\left(-e^{-\gamma_3}\dot{\gamma_1}+e^{\gamma_3}\dot{\gamma_2}\right)\dot{\gamma}_3\right]\Bigg\}e_2.\nonumber\\
\end{align}
\begin{lem}
Let $\Sigma\subset(E(1,1),g_L)$ be a regular surface.
Let $\gamma:[a,b]\rightarrow \Sigma$ be a Euclidean $C^2$-smooth regular curve. Then\\
(1)when $\omega(\dot{\gamma}(t))\neq 0,$
\begin{equation}
k_{\gamma,\Sigma}^{\infty,\nabla^{2,\beta}}=\frac{|\sqrt{2}(1-\beta)\overline{q}\left(-e^{-\gamma_3}\dot{\gamma_1}+e^{\gamma_3}\dot{\gamma_2}\right)+(2-\beta)\overline{p}\dot{\gamma_3}|}{|2\omega(\dot{\gamma}(t))|},
\end{equation}
(2)when $\omega(\dot{\gamma}(t))= 0, ~~~and~~~\frac{d}{dt}(\omega(\dot{\gamma}(t)))=0,$
\begin{equation}
k_{\gamma,\Sigma}^{\infty,\nabla^{2,\beta}}=0,\\
\end{equation}
(3)when $\omega(\dot{\gamma}(t))= 0, ~~~and~~~\frac{d}{dt}(\omega(\dot{\gamma}(t)))\neq0,$
\begin{equation}
{\rm lim}_{L\rightarrow +\infty}\frac{k_{\gamma,\Sigma}^{L,\nabla^{2,\beta}}}{\sqrt{L}}=\frac{|\frac{d}{dt}(\omega(\dot{\gamma}(t)))|}{|\overline{q}\dot{\gamma}_3-\frac{\sqrt{2}\overline{p}}{2}\left(-e^{-\gamma_3}\dot{\gamma_1}+e^{\gamma_3}\dot{\gamma_2}\right)|}.
\end{equation}
\end{lem}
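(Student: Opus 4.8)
The plan is to follow verbatim the scheme used in the proof of Lemma 2.12 (and its $E(1,1)$ analogue, Lemma 4.5), now fed by the data assembled in Section 5. First I would record the decomposition of $\dot\gamma$ in the adapted orthonormal frame $\{e_1,e_2\}$ of $T\Sigma$ from (2.18). Starting from $\dot\gamma=\dot\gamma_3X_1+\frac{\sqrt2}{2}(-e^{-\gamma_3}\dot\gamma_1+e^{\gamma_3}\dot\gamma_2)X_2+\omega(\dot\gamma(t))X_3$ (see (4.8)) and inverting (2.18) exactly as in the proof of Lemma 2.12, one obtains
\[
\dot\gamma=\Big(\overline{q}\,\dot\gamma_3-\tfrac{\sqrt2}{2}\,\overline{p}\,(-e^{-\gamma_3}\dot\gamma_1+e^{\gamma_3}\dot\gamma_2)\Big)e_1-\frac{l_L}{l}L^{\frac12}\omega(\dot\gamma(t))\,e_2.
\]
Writing $\lambda_1:=\overline{q}\,\dot\gamma_3-\tfrac{\sqrt2}{2}\,\overline{p}\,(-e^{-\gamma_3}\dot\gamma_1+e^{\gamma_3}\dot\gamma_2)$ for the $e_1$-coefficient, together with the already computed expansion of $\nabla^{\Sigma,2,\beta}_{\dot\gamma}\dot\gamma$ in the $\{e_1,e_2\}$ frame (and its specialization when $\omega(\dot\gamma(t))=0$), this is all the input the three cases need. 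Throughout I would use $\overline{r_L}=r/l_L=O(L^{-1/2})$, $l/l_L\to1$, $\overline{p_L}\to\overline{p}$, $\overline{q_L}\to\overline{q}$, and expand each of the three scalars $\|\nabla^{\Sigma,2,\beta}_{\dot\gamma}\dot\gamma\|_{\Sigma,L}^2$, $\|\dot\gamma\|_{\Sigma,L}$, $\langle\nabla^{\Sigma,2,\beta}_{\dot\gamma}\dot\gamma,\dot\gamma\rangle_{\Sigma,L}$ in powers of $L$ before substituting into the definition of the geodesic curvature and letting $L\to+\infty$.

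For case (1), when $\omega(\dot\gamma(t))\neq0$, I would read off that the $e_1$-component of $\nabla^{\Sigma,2,\beta}_{\dot\gamma}\dot\gamma$ grows like $L\,\omega(\dot\gamma(t))$ with leading coefficient $\frac12\big(\sqrt2(1-\beta)\overline{q}(-e^{-\gamma_3}\dot\gamma_1+e^{\gamma_3}\dot\gamma_2)+(2-\beta)\overline{p}\,\dot\gamma_3\big)$, whereas the $e_2$-component is only $O(L^{1/2})$, since every one of its terms carries either a factor $\overline{r_L}=O(L^{-1/2})$ or the prefactor $\tfrac{l}{l_L}L^{1/2}$. Hence $\|\nabla^{\Sigma,2,\beta}_{\dot\gamma}\dot\gamma\|_{\Sigma,L}^2\sim L^2\omega(\dot\gamma(t))^2\,(\mathrm{coeff})^2$, while $\|\dot\gamma\|_{\Sigma,L}\sim L^{1/2}|\omega(\dot\gamma(t))|$ and $\langle\nabla^{\Sigma,2,\beta}_{\dot\gamma}\dot\gamma,\dot\gamma\rangle_{\Sigma,L}\sim C_0L$. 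Consequently the first term of the geodesic-curvature quotient converges to $(\mathrm{coeff})^2/\omega(\dot\gamma(t))^2$ and the second term converges to $0$, which yields the stated formula in case (1). This step is the line-by-line analogue of the estimates in the proof of Lemma 2.12.

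For cases (2) and (3), where $\omega(\dot\gamma(t))=0$, the decomposition degenerates to $\dot\gamma=\lambda_1e_1$, so that $\dot\gamma$ is parallel to $e_1$. The key algebraic simplification is that, writing $\nabla^{\Sigma,2,\beta}_{\dot\gamma}\dot\gamma=a\,e_1+b\,e_2$, the definition of the geodesic curvature collapses to
\[
\big(k^{L,\nabla^{2,\beta}}_{\gamma,\Sigma}\big)^2=\frac{a^2+b^2}{\lambda_1^4}-\frac{a^2\lambda_1^2}{\lambda_1^6}=\frac{b^2}{\lambda_1^4},
\]
so that only the $e_2$-component $b$ survives. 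From the $\omega(\dot\gamma(t))=0$ specialization of $\nabla^{\Sigma,2,\beta}_{\dot\gamma}\dot\gamma$ one has $b=-\tfrac{l}{l_L}L^{1/2}\big(\tfrac{d}{dt}\omega(\dot\gamma(t))-\tfrac{\sqrt2}{2L}(-e^{-\gamma_3}\dot\gamma_1+e^{\gamma_3}\dot\gamma_2)\dot\gamma_3\big)$ up to the $O(L^{-1/2})$ terms carrying $\overline{r_L}$. When $\tfrac{d}{dt}\omega(\dot\gamma(t))=0$ this gives $b=O(L^{-1/2})\to0$ and hence $k^{\infty,\nabla^{2,\beta}}_{\gamma,\Sigma}=0$ (case (2)); when $\tfrac{d}{dt}\omega(\dot\gamma(t))\neq0$ it gives $b\sim-L^{1/2}\tfrac{d}{dt}\omega(\dot\gamma(t))$, whence $k^{L,\nabla^{2,\beta}}_{\gamma,\Sigma}\sim L^{1/2}|\tfrac{d}{dt}\omega(\dot\gamma(t))|/\lambda_1^{2}$, and dividing by $\sqrt L$ produces the limit in case (3).

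The main obstacle is bookkeeping rather than anything conceptual: in each scaling regime one must identify precisely which terms of the expansion of $\nabla^{\Sigma,2,\beta}_{\dot\gamma}\dot\gamma$ survive, and in particular verify the exact cancellation of the $e_1$-contribution $a$ in the difference $\|\nabla\|^2/\|\dot\gamma\|^4-\langle\nabla,\dot\gamma\rangle^2/\|\dot\gamma\|^6$ when $\omega(\dot\gamma(t))=0$; it is this cancellation that isolates $b$ as the sole surviving term. Carefully tracking the orders of $\overline{r_L}$, $l/l_L$ and the mixed $L^{1/2}$ prefactors is the only delicate point, and it is carried out exactly as in the fully worked proof of Lemma 2.12.
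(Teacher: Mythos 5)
Your proposal is correct and follows essentially the same route the paper intends: Lemma 5.4 is the $E(1,1)$ analogue of Lemma 2.12, whose proof the paper gives in full, and you reproduce that scheme faithfully --- decompose $\dot\gamma=\lambda_1e_1+\lambda_2e_2$ via (2.18), with $\lambda_1=\overline{q}\,\dot\gamma_3-\frac{\sqrt2}{2}\overline{p}\left(-e^{-\gamma_3}\dot{\gamma_1}+e^{\gamma_3}\dot{\gamma_2}\right)$ and $\lambda_2=-\frac{l_L}{l}L^{\frac12}\omega(\dot\gamma(t))$, feed in (5.9)/(5.10), and expand $||\nabla^{\Sigma,2,\beta}_{\dot\gamma}\dot\gamma||^2_{\Sigma,L}$, $||\dot\gamma||_{\Sigma,L}$, $\langle\nabla^{\Sigma,2,\beta}_{\dot\gamma}\dot\gamma,\dot\gamma\rangle_{\Sigma,L}$ in powers of $L$. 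Your exact identity $\bigl(k^{L,\nabla^{2,\beta}}_{\gamma,\Sigma}\bigr)^2=b^2/\lambda_1^4$ when $\omega(\dot\gamma(t))=0$ is a slightly cleaner packaging of the cancellation that the paper only verifies asymptotically (the analogue of (2.33)--(2.35)), and your handling of the $O(L^{-1})$ correction $-\frac{\sqrt2}{2L}\left(-e^{-\gamma_3}\dot{\gamma_1}+e^{\gamma_3}\dot{\gamma_2}\right)\dot\gamma_3$ --- which explains why, unlike Lemma 3.4, the case split here depends only on $\frac{d}{dt}\omega(\dot\gamma(t))$ --- is exactly right.

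One caveat on case (3): your computation yields $k^{L,\nabla^{2,\beta}}_{\gamma,\Sigma}/\sqrt L\to\left|\frac{d}{dt}\omega(\dot\gamma(t))\right|/\lambda_1^2$, i.e.\ with the denominator \emph{squared}, consistent with (2.26) in the fully worked Lemma 2.12; the printed formula (5.13) has $|\lambda_1|$ to the first power. Since $||\dot\gamma||_{\Sigma,L}=|\lambda_1|$ is not normalized to $1$, the squared denominator is what the scheme actually produces, and the first-power denominators in (5.13) (and likewise (4.19)) appear to be typographical slips. So your derivation is sound, but your closing assertion that dividing by $\sqrt L$ ``produces the limit in case (3)'' silently identifies your (correct) result with the printed statement; you should flag the discrepancy rather than claim verbatim agreement.
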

\begin{lem}
Let $\Sigma\subset(E(1,1),g_L)$ be a regular surface.
Let $\gamma:[a,b]\rightarrow \Sigma$ be a Euclidean $C^2$-smooth regular curve. Then\\
(1)when $\omega(\dot{\gamma}(t))\neq 0$,
\begin{equation}
k_{\gamma,\Sigma}^{\infty,\nabla^{2,\beta},s}=\frac{|\sqrt{2}(1-\beta)\overline{q}\left(-e^{-\gamma_3}\dot{\gamma_1}+e^{\gamma_3}\dot{\gamma_2}\right)+(2-\beta)\overline{p}\dot{\gamma_3}|}{|2\omega(\dot{\gamma}(t))|},
\end{equation}
(2)when $\omega(\dot{\gamma}(t))= 0, ~~ ~and~ ~~\frac{d}{dt}(\omega(\dot{\gamma}(t)))=0$,
\begin{equation}
k^{\infty,\nabla^{2,\beta},s}_{\gamma,\Sigma}=0,\\
\end{equation}
(3)when $\omega(\dot{\gamma}(t))= 0, ~~ ~and~ ~~\frac{d}{dt}(\omega(\dot{\gamma}(t)))\neq0$,
\begin{equation}
{\rm lim}_{L\rightarrow +\infty}\frac{k_{\gamma,\Sigma}^{L,\nabla^{2,\beta},s}}{\sqrt{L}}=\frac{\left[-\overline{q}\dot{\gamma}_3+\frac{\sqrt{2}\overline{p}}{2}\left(-e^{-\gamma_3}\dot{\gamma_1}+e^{\gamma_3}\dot{\gamma_2}\right)\right]\frac{d}{dt}(\omega(\dot{\gamma}(t)))}{|\overline{q}\dot{\gamma}_3-\frac{\sqrt{2}\overline{p}}{2}\left(-e^{-\gamma_3}\dot{\gamma_1}+e^{\gamma_3}\dot{\gamma_2}\right)|^3}.
\end{equation}
\end{lem}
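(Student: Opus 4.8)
The plan is to follow the template of Lemma 2.16, the corresponding signed geodesic curvature statement for $\nabla^{1,\alpha}$ in the affine group, now carried out in $(E(1,1),g_L)$ with the connection $\nabla^{2,\beta}$. Throughout I abbreviate $A:=-e^{-\gamma_3}\dot\gamma_1+e^{\gamma_3}\dot\gamma_2$, so that the decomposition (4.8) reads $\dot\gamma=\dot\gamma_3X_1+\frac{\sqrt{2}}{2}AX_2+\omega(\dot\gamma(t))X_3$. First I would resolve $\dot\gamma$ in the orthonormal surface frame $\{e_1,e_2\}$ of (2.18): writing $\dot\gamma=\lambda_1e_1+\lambda_2e_2$ and inverting the linear system exactly as in (2.28)--(2.29), one gets $\lambda_1=\overline{q}\dot\gamma_3-\overline{p}\frac{\sqrt{2}}{2}A$ and $\lambda_2=-\frac{l_L}{l}L^{1/2}\omega(\dot\gamma(t))$. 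Applying $J_L$ from (2.19) then gives
$$J_L(\dot\gamma)=\frac{l_L}{l}L^{1/2}\omega(\dot\gamma(t))\,e_1+\Big(\overline{q}\dot\gamma_3-\overline{p}\tfrac{\sqrt{2}}{2}A\Big)e_2,$$
together with $||\dot\gamma||_{\Sigma,L}=\sqrt{(\overline{q}\dot\gamma_3-\overline{p}\frac{\sqrt{2}}{2}A)^2+(l_L/l)^2L\,\omega(\dot\gamma(t))^2}$.

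Next I would substitute the expression (5.9) for $\nabla^{\Sigma,2,\beta}_{\dot\gamma}\dot\gamma$ (and its horizontal form (5.10) when $\omega(\dot\gamma(t))=0$) into the definition (2.36) and expand the numerator $\langle\nabla^{\Sigma,2,\beta}_{\dot\gamma}\dot\gamma,J_L(\dot\gamma)\rangle_{\Sigma,L}$ as the $e_1$-component of $\nabla^{\Sigma,2,\beta}_{\dot\gamma}\dot\gamma$ times $\frac{l_L}{l}L^{1/2}\omega(\dot\gamma(t))$ plus its $e_2$-component times $\lambda_1$. The whole asymptotic analysis is then governed by the elementary limits $\overline{r_L}=O(L^{-1/2})$, $l/l_L\to1$, $\overline{p_L}\to\overline{p}$ and $\overline{q_L}\to\overline{q}$ as $L\to+\infty$.

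For case (1), with $\omega(\dot\gamma(t))\neq0$, the leading behaviour comes from the $O(L)$ part of the $e_1$-component of $\nabla^{\Sigma,2,\beta}_{\dot\gamma}\dot\gamma$, which equals $L\omega(\dot\gamma(t))\big[\frac{\sqrt{2}(1-\beta)\overline{q}}{2}A+\frac{(2-\beta)\overline{p}}{2}\dot\gamma_3\big]$; multiplying by $\frac{l_L}{l}L^{1/2}\omega(\dot\gamma(t))$ produces the dominant term
$$\langle\nabla^{\Sigma,2,\beta}_{\dot\gamma}\dot\gamma,J_L(\dot\gamma)\rangle_{\Sigma,L}\sim \frac{L^{3/2}}{2}\omega(\dot\gamma(t))^2\big[\sqrt{2}(1-\beta)\overline{q}\,A+(2-\beta)\overline{p}\,\dot\gamma_3\big],$$
while $||\dot\gamma||^3_{\Sigma,L}\sim L^{3/2}|\omega(\dot\gamma(t))|^3$; dividing and using $\omega^2/|\omega|^3=1/|\omega|$ reproduces the numerator of (5.14). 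For cases (2) and (3) the factor $\omega(\dot\gamma(t))=0$ annihilates the $e_1$-term of $J_L(\dot\gamma)$, so only $\lambda_1$ times the $e_2$-component of $\nabla^{\Sigma,2,\beta}_{\dot\gamma}\dot\gamma$ survives. When in addition $\frac{d}{dt}\omega(\dot\gamma(t))=0$, each remaining term of that $e_2$-component carries a factor $\overline{r_L}$ or $L^{1/2}\cdot L^{-1}$ and is therefore $O(L^{-1/2})$; since $||\dot\gamma||_{\Sigma,L}=|\overline{q}\dot\gamma_3-\overline{p}\frac{\sqrt{2}}{2}A|$ is $O(1)$, the quotient tends to $0$, giving (5.15). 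When $\frac{d}{dt}\omega(\dot\gamma(t))\neq0$, the term $-\frac{l}{l_L}L^{1/2}\frac{d}{dt}\omega(\dot\gamma(t))$ dominates the $e_2$-component, so the numerator is $\sim L^{1/2}\big(-\overline{q}\dot\gamma_3+\overline{p}\frac{\sqrt{2}}{2}A\big)\frac{d}{dt}\omega(\dot\gamma(t))$, and dividing by $\sqrt{L}\,||\dot\gamma||^3_{\Sigma,L}$ yields (5.16).

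The routine but delicate part will be the bookkeeping of powers of $L$ in case (1): several contributions — the $\overline{r_L}\cdot L\omega$ pieces of the $e_2$-component, the mixed $\lambda_1$-terms, and the subleading parts of the $e_1$-component — are individually $O(L^{1/2})$ and must be verified to be negligible against the $L^{3/2}$ leading term, while one checks that the surviving $L^{3/2}$-coefficient assembles precisely into $\frac{1}{2}[\sqrt{2}(1-\beta)\overline{q}A+(2-\beta)\overline{p}\dot\gamma_3]$. Apart from this, the argument follows the proof of Lemma 2.16 verbatim, the only change being the $E(1,1)$ frame coefficients from (4.8) and the connection data of Lemma 5.1.
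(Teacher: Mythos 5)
Your proposal is correct and is essentially the paper's own argument: the paper states this lemma without proof, implicitly transplanting the proof of Lemma 2.16 (via (2.28)--(2.29), (2.40) and the expansion of $\langle\nabla^{\Sigma}_{\dot\gamma}\dot\gamma,J_L(\dot\gamma)\rangle_{\Sigma,L}$) to the $E(1,1)$ frame with the data of Lemma 5.1 and (5.9)--(5.10), which is exactly what you do. Your power-of-$L$ bookkeeping (leading $L^{3/2}$ term in case (1), the $O(L^{-1/2})$ decay from $\overline{r_L}$ and the $\tfrac{1}{L}$-factor in case (2), and the dominant $-\tfrac{l}{l_L}L^{1/2}\tfrac{d}{dt}\omega(\dot\gamma(t))$ term in case (3)) matches the stated limits (5.14)--(5.16).
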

Similarly to Theorem 4.3 in \cite{CDPT}, we have
\vskip 0.5 true cm
\begin{thm} The second fundamental form $II^{\nabla^{2,\beta},L}$ of the
embedding of $\Sigma$ into $(E(1,1),g_L)$ is given by
\begin{equation}
II^{\nabla^{2,\beta},L}=\left(
  \begin{array}{cc}
  h^{2,\beta}_{11},
    & h^{2,\beta}_{12} \\
  h^{2,\beta}_{21} ,
    & h^{2,\beta}_{22} \\
  \end{array}
\right),
\end{equation}
where
$$h^{2,\beta}_{11}=\frac{l}{l_L}[X_1(\overline{p})+X_2(\overline{q})]+\frac{\beta L+\beta-2}{2}\overline{r_L}\overline{pq}L^{-\frac{1}{2}},$$
    $$h^{2,\beta}_{12}=-\frac{l_L}{l}\langle e_1,\nabla_H(\overline{r_L})\rangle_L-\frac{(1-\beta\overline{p_L}^2)\sqrt{L}}{2}+\frac{\overline{q_L}^2-\overline{p_L}^2(1-\beta)}{2\sqrt{L}}+\frac{[(1-\beta)\overline{p}^2-\overline{q}^2]\overline{r_L}^2}{2\sqrt{L}},$$
    $$h^{2,\beta}_{21}=-\frac{l_L}{l}\langle e_1,\nabla_H(\overline{r_L})\rangle_L+\frac{(\beta\frac{l}{l_L}\overline{p_L}-\beta\frac{l}{l_L}\overline{q_L}-1)\sqrt{L}}{2}-\frac{\overline{q_L}\beta\frac{l}{l_L}+\overline{p_L}\beta\frac{l}{l_L}}{2\sqrt{L}}+\frac{\overline{q}^2\overline{r_L}^2(-L-1 )}{2\sqrt{L}},$$
    $$h^{2,\beta}_{22}=-\frac{l^2}{l_L^2}\langle e_2,\nabla_H(\frac{r}{l})\rangle_L+\widetilde{X_3}(\overline{r_L})-\beta\overline{p_L}\overline{q_L}\overline{r_L}L^{\frac{1}{2}}+\frac{(1-\alpha)\overline{p_L}\overline{q_L}\overline{r_L}}{\sqrt{L}}+\frac{(1-\beta)(-L-1)\overline{p_L}\overline{q_L}\overline{r_L}}{2\sqrt{L}}-\frac{(\beta L +\beta-2)\overline{p}\overline{q}\overline{r_L}^2}{2}.$$
\end{thm}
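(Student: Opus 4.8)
The plan is to follow the same scheme used in the proof of Theorem 2.10, adapting the covariant derivatives to the second kind of deformed Schouten-Van Kampen connection $\nabla^{2,\beta}$ on $E(1,1)$ supplied by Lemma 5.1. Since $\nabla^{2,\beta}$ is a metric connection and $\langle v_L,e_j\rangle_L=0$ is constant, differentiating along $e_i$ yields the identity $\langle \nabla^{2,\beta}_{e_i}v_L,e_j\rangle_L=-\langle \nabla^{2,\beta}_{e_i}e_j,v_L\rangle_L$ for $i,j=1,2$. This reduces the theorem to computing the four covariant derivatives $\nabla^{2,\beta}_{e_i}e_j$ and pairing them against $v_L$.

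First I would expand $e_1=\overline{q}X_1-\overline{p}X_2$ and $e_2=\overline{r_L}\,\overline{p}X_1+\overline{r_L}\,\overline{q}X_2-\frac{l}{l_L}\widetilde{X_3}$ using the definitions in (2.18), recalling $\widetilde{X_3}=L^{-\frac{1}{2}}X_3$. Applying the Leibniz rule, each $\nabla^{2,\beta}_{e_i}e_j$ splits into a derivative part, in which $e_i$ differentiates the coefficient functions $\overline{p},\overline{q},\overline{r_L},\frac{l}{l_L}$ and produces the terms $X_1(\overline{p})$, $X_2(\overline{q})$, $\widetilde{X_3}(\overline{r_L})$, and a connection part, obtained by inserting the values $\nabla^{2,\beta}_{X_a}X_b$ from Lemma 5.1. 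Collecting the $X_1$, $X_2$, $\widetilde{X_3}$ components, with the appropriate $L^{\pm\frac{1}{2}}$ weights coming from $\widetilde{X_3}$ and from the $\frac{(1-\beta)(L+1)}{2}$- and $\frac{L-1}{2L}$-type structure constants, gives explicit expressions for $\nabla^{2,\beta}_{e_1}e_1$, $\nabla^{2,\beta}_{e_1}e_2$, $\nabla^{2,\beta}_{e_2}e_1$, $\nabla^{2,\beta}_{e_2}e_2$.

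Pairing each of these against $v_L=\overline{p_L}X_1+\overline{q_L}X_2+\overline{r_L}\widetilde{X_3}$ and using the normalizations $\overline{p}^2+\overline{q}^2=1$ and $\overline{p_L}^2+\overline{q_L}^2+\overline{r_L}^2=1$, together with the identities relating the two frames through $\frac{l}{l_L}$ and $\frac{l_L}{l}$, then produces the four scalars $h^{2,\beta}_{11}$, $h^{2,\beta}_{12}$, $h^{2,\beta}_{21}$, $h^{2,\beta}_{22}$. As in Theorem 2.10, I would rewrite the purely horizontal derivative combinations in the compact form $\langle e_1,\nabla_H(\overline{r_L})\rangle_L$ and $\langle e_2,\nabla_H(\frac{r}{l})\rangle_L$, so that the stated formulas emerge after simplification.

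The main obstacle is purely the bookkeeping. Because the connection coefficients in Lemma 5.1 carry factors of $L$ and $\frac{1}{L}$ while $\widetilde{X_3}$ carries $L^{-\frac{1}{2}}$, the several $\sqrt{L}$-weighted contributions, namely the $\frac{\beta L+\beta-2}{2}$ and $\frac{(1-\beta)(L+1)}{2}$ terms, must be tracked carefully and grouped so that they combine into exactly the coefficients displayed in $h^{2,\beta}_{11}$ through $h^{2,\beta}_{22}$. A sign or weight error in any single structure constant would propagate through all four entries, so the care lies in the collection of terms rather than in any conceptual step.
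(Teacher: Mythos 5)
Your proposal follows exactly the route the paper itself takes: the paper offers no separate argument for this theorem beyond the remark ``Similarly to Theorem 4.3 in \cite{CDPT}'', and its template proof (Theorem 2.10) is precisely your scheme --- metric compatibility giving $\langle\nabla^{2,\beta}_{e_i}v_L,e_j\rangle_L=-\langle\nabla^{2,\beta}_{e_i}e_j,v_L\rangle_L$, Leibniz expansion of $\nabla^{2,\beta}_{e_i}e_j$ in the frame $X_1,X_2,\widetilde{X_3}$ using the connection coefficients of Lemma 5.1, and pairing against $v_L$ with the stated normalizations. This is correct and essentially identical to the paper's approach, so nothing further is needed (though note the $(1-\alpha)$ appearing in $h^{2,\beta}_{22}$ is evidently a typo carried over from Section 4 and should involve $\beta$).
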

\indent The Riemannian mean curvature $\mathcal{H}_{\nabla^{2,\beta},L}$ of $\Sigma$ is defined by
$$\mathcal{H}_{\nabla^{2,\beta},L}:={\rm tr}(II^{\nabla^{2,\beta},L}).$$
\begin{prop} Away from characteristic points, the horizontal mean curvature associated to the second kind of deformed canonical connection $\nabla^{2,\beta}$, $\mathcal{H}_{\nabla^{2,\beta},\infty}$ of $\Sigma\subset E(1,1)$ is given by
\begin{equation}
\mathcal{H}_{\nabla^{2,\beta},\infty}={\rm lim}_{L\rightarrow +\infty}\mathbb{}\mathcal{H}_{\nabla^{2,\beta},L}=X_1(\overline{p})+X_2(\overline{q})-\frac{\overline{pq}}{2}\frac{X_3u}{|\nabla_Hu|}-\frac{\beta\overline{pq}}{2}\left(\frac{X_3u}{|\nabla_Hu|}\right)^2.
\end{equation}
\end{prop}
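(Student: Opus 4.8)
The plan is to start from the definition $\mathcal{H}_{\nabla^{2,\beta},L}={\rm tr}(II^{\nabla^{2,\beta},L})=h^{2,\beta}_{11}+h^{2,\beta}_{22}$ and pass to the limit $L\to+\infty$ term by term, using the explicit entries recorded in Theorem 5.7. The essential input is the asymptotic behaviour of the auxiliary functions of (2.17)--(2.18). Since $r=\widetilde{X_3}u=L^{-1/2}X_3u$, one has $l_L=\sqrt{l^2+r^2}=l\bigl(1+\tfrac12 L^{-1}(\tfrac{X_3u}{l})^2+O(L^{-2})\bigr)$, so that $\frac{l}{l_L}\to1$, $\overline{p_L}\to\overline{p}$, $\overline{q_L}\to\overline{q}$, and, crucially,
$$\overline{r_L}=L^{-1/2}\frac{X_3u}{|\nabla_Hu|}+O(L^{-3/2}),\qquad L\to+\infty.$$
I would record these expansions first, since every surviving contribution to the trace arises from pairing a prefactor that grows like $L^{1/2}$ or $L$ with a power of $\overline{r_L}$ decaying at exactly the matching rate (here $l=|\nabla_Hu|$).

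Next I would evaluate the limits of the two diagonal entries separately. For $h^{2,\beta}_{11}$ the first summand tends to $X_1(\overline{p})+X_2(\overline{q})$, while $\frac{\beta L+\beta-2}{2}\overline{r_L}\,\overline{p}\,\overline{q}\,L^{-1/2}=\frac{\beta L+\beta-2}{2L}\bigl(\tfrac{X_3u}{|\nabla_Hu|}\bigr)\overline{p}\,\overline{q}+O(L^{-1})\to\frac{\beta}{2}\,\overline{p}\,\overline{q}\,\frac{X_3u}{|\nabla_Hu|}$. For $h^{2,\beta}_{22}$ the terms built from $\langle e_2,\nabla_H(\tfrac rl)\rangle_L$, from $\widetilde{X_3}(\overline{r_L})$, and the summand $\frac{(1-\alpha)\overline{p_L}\overline{q_L}\overline{r_L}}{\sqrt L}$ are all $O(L^{-1})$ and vanish, whereas the three potentially divergent terms must be shown to possess finite limits: $-\beta\overline{p_L}\overline{q_L}\overline{r_L}L^{1/2}\to-\beta\,\overline{p}\,\overline{q}\,\frac{X_3u}{|\nabla_Hu|}$, $\frac{(1-\beta)(-L-1)\overline{p_L}\overline{q_L}\overline{r_L}}{2\sqrt L}\to-\frac{1-\beta}{2}\,\overline{p}\,\overline{q}\,\frac{X_3u}{|\nabla_Hu|}$, and $-\frac{(\beta L+\beta-2)\overline{p}\,\overline{q}\,\overline{r_L}^2}{2}\to-\frac{\beta}{2}\,\overline{p}\,\overline{q}\bigl(\tfrac{X_3u}{|\nabla_Hu|}\bigr)^2$, where I would use $\overline{r_L}L^{1/2}\to\frac{X_3u}{|\nabla_Hu|}$ and $L\,\overline{r_L}^2\to(\tfrac{X_3u}{|\nabla_Hu|})^2$ obtained from the expansion above.

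Finally I would add the two limits. The contributions linear in $\frac{X_3u}{|\nabla_Hu|}$ combine as $\frac{\beta}{2}-\frac{1+\beta}{2}=-\frac12$ (the $\frac{\beta}{2}$ coming from $h^{2,\beta}_{11}$ against $-\beta-\frac{1-\beta}{2}=-\frac{1+\beta}{2}$ from $h^{2,\beta}_{22}$), yielding exactly $-\frac{\overline{p}\,\overline{q}}{2}\frac{X_3u}{|\nabla_Hu|}$, while the quadratic term $-\frac{\beta}{2}\overline{p}\,\overline{q}(\tfrac{X_3u}{|\nabla_Hu|})^2$ survives untouched, producing (5.23). The one genuine subtlety---and the step I would treat with the most care---is the bookkeeping of orders: several entries of $II^{\nabla^{2,\beta},L}$ individually blow up like $\sqrt L$, and I must verify that each such entry carries a compensating factor of $\overline{r_L}$ or $\overline{r_L}^2$ decaying at precisely the rate needed for the trace to admit a finite limit. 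Retaining the $O(L^{-3/2})$ correction in the expansion of $\overline{r_L}$ is what guarantees that no hidden $O(1)$ piece is dropped in the divergent-looking terms.
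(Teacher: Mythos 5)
Your proposal is correct and takes essentially the same approach as the paper, which states Proposition 5.8 without written proof but (as in its proof of the analogous Proposition 2.18) clearly intends exactly this computation: trace the second fundamental form from Theorem 5.7 and pass to the limit using $\overline{r_L}=L^{-1/2}\frac{X_3u}{|\nabla_Hu|}+O(L^{-3/2})$, $\overline{p_L}\to\overline{p}$, $\overline{q_L}\to\overline{q}$, $\frac{l}{l_L}\to 1$. Your order bookkeeping is sound, including the observation that the $(1-\alpha)$ summand in $h^{2,\beta}_{22}$ (apparently a typo for $(1-\beta)$) is $O(L^{-1})$ and vanishes regardless, and the cancellation $\frac{\beta}{2}-\beta-\frac{1-\beta}{2}=-\frac{1}{2}$ reproducing the coefficient of $\frac{\overline{p}\,\overline{q}}{2}\frac{X_3u}{|\nabla_Hu|}$ in (5.20).
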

By Lemma 5.1 and (2.54), we have
\begin{lem}
Let $E(1,1)$ be the group of rigid motions of the Minkowski plane, then
\begin{align}
&R^{2,\beta}(X_1,X_2)X_1=\frac{(2L^2+L-1)(1-\beta)}{2L}X_2,~~~ R^{2,\beta}(X_1,X_2)X_2=-\frac{L(L+1)(1-\beta)}{2}X_1,\nonumber\\
&R^{2,\beta}(X_1,X_2)X_3=0,~~~R^{2,\beta}(X_1,X_3)X_1=\frac{(-L^2+2L+3)(1-\beta)}{4L}X_3,~~~R^{2,\beta}(X_1,X_3)X_2=0,\nonumber\\
&R^{2,\beta}(X_1,X_3)X_3=\frac{(1-\beta)(L^2-2L-3)}{4}X_1,~~~R^{2,\beta}(X_2,X_3)X_1=0,\nonumber\\
&R^{2,\beta}(X_2,X_3)X_2=-\frac{-(1-\beta)^2(L^2+2L+1)}{4L}X_3,~~~R^{2,\beta}(X_2,X_3)X_3=\frac{(1-\beta)^2(L^2+2L+1)}{4}X_2.\nonumber\\
\end{align}
\end{lem}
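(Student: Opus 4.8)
The plan is to compute each component $R^{2,\beta}(X_i,X_j)X_k$ directly from the definition of the curvature of the connection. Exactly as in (2.54) I set
$$R^{2,\beta}(X,Y)Z=\nabla^{2,\beta}_X\nabla^{2,\beta}_YZ-\nabla^{2,\beta}_Y\nabla^{2,\beta}_XZ-\nabla^{2,\beta}_{[X,Y]}Z,$$
and then substitute the covariant derivatives $\nabla^{2,\beta}_{X_a}X_b$ supplied by Lemma 5.1, together with the bracket relations (5.2), namely $[X_1,X_2]=X_3$, $[X_1,X_3]=X_2$ and $[X_2,X_3]=0$. Since $X_1,X_2,X_3$ span $TE(1,1)$, it suffices to evaluate the three pairs $(X_1,X_2)$, $(X_1,X_3)$, $(X_2,X_3)$ on each of $X_1,X_2,X_3$.

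First I would record the simplifications that trim the work. Because $\nabla^{2,\beta}_{X_i}X_i=0$ for $i=1,2,3$ in Lemma 5.1, every inner term of the form $\nabla^{2,\beta}_{X_j}\nabla^{2,\beta}_{X_i}X_i$ vanishes, and because $[X_2,X_3]=0$ the bracket term drops out of all three components $R^{2,\beta}(X_2,X_3)X_k$. For the pairs $(X_1,X_2)$ and $(X_1,X_3)$ the bracket term $\nabla^{2,\beta}_{[X_i,X_j]}X_k$ reduces to a single covariant derivative $\nabla^{2,\beta}_{X_3}X_k$ or $\nabla^{2,\beta}_{X_2}X_k$, again read off from Lemma 5.1. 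Carrying out the products then yields each coefficient as a rational function of $L$ times a polynomial in $(1-\beta)$; for instance, the pair $(X_2,X_3)$ applied to $X_2$ produces $\nabla^{2,\beta}_{X_2}\bigl(\tfrac{(1-\beta)(L+1)}{2}X_1\bigr)=\tfrac{(1-\beta)^2(L+1)^2}{4L}X_3$, which is the stated value.

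The main obstacle is purely organizational: each nonvanishing component is a difference of products of coefficients that carry both the metric parameter $L$ from $g_L$ and the deformation parameter $\beta$, so sign and arithmetic slips are easy to introduce. To control this I would exploit that $\nabla^{2,\beta}$ is a metric connection with respect to $g_L$, so its curvature is antisymmetric in the last two arguments,
$$\langle R^{2,\beta}(X_i,X_j)X_k,X_l\rangle_L=-\langle R^{2,\beta}(X_i,X_j)X_l,X_k\rangle_L.$$
Using $\langle X_1,X_1\rangle_L=\langle X_2,X_2\rangle_L=1$ and $\langle X_3,X_3\rangle_L=L$, this relation links, for example, the $X_2$-component of $R^{2,\beta}(X_2,X_3)X_3$ to the $X_3$-component of $R^{2,\beta}(X_2,X_3)X_2$, and so provides an independent check on every coefficient, and in particular on every sign, with essentially no extra computation. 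Once all the components are assembled and cross-checked in this way, the list of identities in the statement follows.
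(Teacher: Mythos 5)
Your proposal coincides with the paper's own proof, which is exactly this unelaborated brute-force computation: substitute the connection coefficients of Lemma 5.1 and the brackets $[X_1,X_2]=X_3$, $[X_1,X_3]=X_2$, $[X_2,X_3]=0$ into the curvature formula (2.54), and your worked sample $R^{2,\beta}(X_2,X_3)X_2=\frac{(1-\beta)^2(L+1)^2}{4L}X_3$ agrees with the stated entry. One caveat: your skew-symmetry cross-check (valid, since $\nabla^{2,\beta}$ is metric) will actually flag sign inconsistencies in the lemma as printed --- e.g.\ the statement gives $\langle R^{2,\beta}(X_2,X_3)X_2,X_3\rangle_L=\langle R^{2,\beta}(X_2,X_3)X_3,X_2\rangle_L=\frac{(1-\beta)^2(L+1)^2}{4}$ rather than opposite values --- which points to sign typos in the paper's Lemma 5.1/5.9 rather than to any flaw in your method.
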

\begin{prop} Away from characteristic points, we have
\begin{equation}
\mathcal{K}^{\Sigma,\nabla^{2,\beta}}(e_1,e_2)\rightarrow \frac{[\beta(\beta-2-\beta\overline{p}^3+\beta\overline{p}^2\overline{q}+\overline{p}^2+\overline{p}-\overline{q})]L}{4}+D_1+O(L^{-2}),~~{\rm as}~~L\rightarrow +\infty,
\end{equation}
where
\begin{align}
D_1&:=\frac{\beta\overline{p}^2+\beta\overline{p}+\beta\overline{q}-2}{2}\langle e_1,\nabla_H(\frac{X_3u}{|\nabla_Hu|})\rangle+\frac{(1-\beta)^2(\overline{p}^2-\overline{q}^2)}{2}+(1-\beta)\left(\frac{X_3u}{|\nabla_Hu|}\right)^2\nonumber\\
&+\left[\frac{-(\beta+1)\overline{pq}}{2}\frac{X_3u}{|\nabla_Hu|}-\frac{\beta\overline{pq}}{2}\left(\frac{X_3u}{|\nabla_Hu|}\right)^2\right]\cdot\left[X_1(\overline{p})+X_2(\overline{q})+\frac{\beta\overline{pq}}{2}\frac{X_3u}{|\nabla_Hu|}\right]\nonumber\\
&-\frac{(\beta\overline{p}-\beta\overline{q}-1)[\overline{q}^2-\overline{p}^2(1-\beta)]-\beta(\overline{p}+\overline{q})(\beta\overline{p}^2-1)}{4}.\nonumber\\
\end{align}
\end{prop}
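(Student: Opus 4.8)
The plan is to follow the same two-step scheme employed in the proofs of Propositions 2.18 and 3.9. The starting point is the Gauss equation for the metric connection $\nabla^{2,\beta}$, which (as in (2.56)) reads $\mathcal{K}^{\Sigma,\nabla^{2,\beta}}(e_1,e_2)=\mathcal{K}^{\nabla^{2,\beta}}(e_1,e_2)+{\rm det}(II^{\nabla^{2,\beta},L})$. This reduces the problem to computing separately the ambient curvature term $\mathcal{K}^{\nabla^{2,\beta}}(e_1,e_2)=-\langle R^{2,\beta}(e_1,e_2)e_1,e_2\rangle_L$ and the determinant of the second fundamental form, and then expanding each in powers of $L$ as $L\to+\infty$.

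First I would compute $\mathcal{K}^{\nabla^{2,\beta}}(e_1,e_2)$. Using the expressions for $e_1,e_2$ in the orthonormal frame given by (2.18), I would expand $\langle R^{2,\beta}(e_1,e_2)e_1,e_2\rangle_L$ by multilinearity into the six independent components $\langle R^{2,\beta}(X_i,X_j)X_k,X_m\rangle_L$, exactly as in (2.61), and then substitute the curvature values supplied by Lemma 5.8. This yields $\mathcal{K}^{\nabla^{2,\beta}}(e_1,e_2)$ as an explicit rational expression in $\overline{p_L},\overline{q_L},\overline{r_L},l/l_L$ and $L$; here one must remember that each factor $\overline{r_L}$ secretly carries a power $L^{-1/2}$.

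Next I would compute ${\rm det}(II^{\nabla^{2,\beta},L})=h^{2,\beta}_{11}h^{2,\beta}_{22}-h^{2,\beta}_{12}h^{2,\beta}_{21}$ directly from the entries listed in Theorem 5.6. Because $h^{2,\beta}_{12}$, $h^{2,\beta}_{21}$ and $h^{2,\beta}_{22}$ each contain terms of order $\sqrt{L}$, the products $h^{2,\beta}_{12}h^{2,\beta}_{21}$ and $h^{2,\beta}_{11}h^{2,\beta}_{22}$ already contribute at order $L$, so no cross term may be dropped prematurely. Into both pieces I would then insert the asymptotic expansions $\overline{r_L}=L^{-1/2}\frac{X_3u}{|\nabla_Hu|}+O(L^{-3/2})$, $\nabla_H(\overline{r_L})=L^{-1/2}\nabla_H(\frac{X_3u}{|\nabla_Hu|})+O(L^{-1})$, $l/l_L=1+O(L^{-1})$, $\overline{p_L}=\overline{p}+O(L^{-1})$ and $\overline{q_L}=\overline{q}+O(L^{-1})$, recalling that $l=|\nabla_Hu|$ and $\overline{p}^2+\overline{q}^2=1$.

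Finally I would add the two contributions and collect terms according to their order in $L$, checking that the sum equals $\frac14\beta(\beta-2-\beta\overline{p}^3+\beta\overline{p}^2\overline{q}+\overline{p}^2+\overline{p}-\overline{q})L+D_1+O(L^{-2})$. The main obstacle is exactly this bookkeeping: both $\mathcal{K}^{\nabla^{2,\beta}}(e_1,e_2)$ and ${\rm det}(II^{\nabla^{2,\beta},L})$ separately carry $O(L)$ and, more delicately, $O(\sqrt{L})$ contributions, and one must verify that all half-integer powers of $L$ cancel in the sum so that no $\sqrt{L}$ term survives, while the $O(L)$ and $O(1)$ parts combine precisely into the asserted leading coefficient and into $D_1$. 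Keeping the $\overline{r_L}\sim L^{-1/2}$ expansions to the correct order inside each quadratic product of second-fundamental-form entries is where the calculation is most delicate and error-prone.
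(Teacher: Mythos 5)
Your proposal is correct and follows essentially the same route as the paper: the paper's own argument (made explicit for Proposition 2.18 and applied verbatim to the analogous Propositions 3.9 and 5.9) is precisely the Gauss equation $\mathcal{K}^{\Sigma,\nabla^{2,\beta}}(e_1,e_2)=\mathcal{K}^{\nabla^{2,\beta}}(e_1,e_2)+\mathrm{det}(II^{\nabla^{2,\beta},L})$, with the ambient term expanded by multilinearity as in (2.61) using the curvature values of Lemma 5.8, the determinant computed from the entries of Theorem 5.6, and both pieces expanded via $\overline{r_L}=L^{-1/2}\frac{X_3u}{|\nabla_Hu|}+O(L^{-3/2})$, $\nabla_H(\overline{r_L})=L^{-1/2}\nabla_H\bigl(\frac{X_3u}{|\nabla_Hu|}\bigr)+O(L^{-1})$ and the limits $\overline{p_L}\rightarrow\overline{p}$, $\overline{q_L}\rightarrow\overline{q}$. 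Your emphasis on tracking the $O(L)$ and $O(\sqrt{L})$ contributions and their cancellation is exactly the bookkeeping the paper's computation relies on, so no gap remains.
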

\begin{thm}
 Let $\Sigma\subset (E(1,1),g_L)$
  be a regular surface with finitely many boundary components $(\partial\Sigma)_i,$ $i\in\{1,\cdots,n\}$, given by Euclidean $C^2$-smooth regular and closed curves $\gamma_i:[0,2\pi]\rightarrow (\partial\Sigma)_i$. Suppose that the characteristic set $C(\Sigma)$ satisfies $\mathcal{H}^1(C(\Sigma))=0$ where $\mathcal{H}^1(C(\Sigma))$ denotes the Euclidean $1$-dimensional Hausdorff measure of $C(\Sigma)$ and that
$||\nabla_Hu||_H^{-1}$ is locally summable with respect to the Euclidean $2$-dimensional Hausdorff measure
near the characteristic set $C(\Sigma)$, then
\begin{equation}
\int_{\Sigma}\frac{\beta(\beta-2-\beta\overline{p}^3+\beta\overline{p}^2\overline{q}+\overline{p}^2+\overline{p}-\overline{q})}{4}d\sigma_\Sigma=0,
\end{equation}
\begin{equation}
-\int_{\Sigma}\frac{\beta(\beta-2-\beta\overline{p}^3+\beta\overline{p}^2\overline{q}+\overline{p}^2+\overline{p}-\overline{q})}{4}d\overline{\sigma_\Sigma}+\int_{\Sigma}D_1d\sigma_\Sigma+\sum_{i=1}^n\int_{\gamma_i}k^{\infty,s}_{\gamma_i,\Sigma}d\overline{s}=0.
\end{equation}
\end{thm}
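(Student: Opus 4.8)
The plan is to follow verbatim the strategy of Theorem 3.10 (the second-kind statement in the affine group), since $E(1,1)$ is treated by the same Riemannian approximation scheme. Write $A:=\beta(\beta-2-\beta\overline{p}^3+\beta\overline{p}^2\overline{q}+\overline{p}^2+\overline{p}-\overline{q})$ for the coefficient of $L$ supplied by Proposition 5.9, so that away from characteristic points $\mathcal{K}^{\Sigma,\nabla^{2,\beta},L}=\frac{A}{4}L+D_1+O(L^{-2})$. First I would assume the characteristic set $C(\Sigma)$ is empty; the general hypothesis $\mathcal{H}^1(C(\Sigma))=0$ with $||\nabla_Hu||_H^{-1}$ locally summable is recovered at the very end by the approximation argument on page~27 of \cite{BTV}. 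On each boundary curve $\gamma_i$, the discussion of \cite{BTV1} shows that the set of parameters with $\omega(\dot{\gamma}_i(t))=0$ and $\frac{d}{dt}\omega(\dot{\gamma}_i(t))\neq0$ is finite, hence a null set for every integral below; off this set Lemma 5.5(1) gives the pointwise expansion $k^{L,\nabla^{2,\beta},s}_{\gamma_i,\Sigma}=k^{\infty,\nabla^{2,\beta},s}_{\gamma_i,\Sigma}+O(L^{-1/2})$.

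The heart of the proof is to feed the Riemannian Gauss--Bonnet theorem (Theorem 2.20), applied to $(\Sigma,g_L)$ and divided through by $\sqrt{L}$,
\[
\int_{\Sigma}\mathcal{K}^{\Sigma,\nabla^{2,\beta},L}\frac{1}{\sqrt{L}}\,d\sigma_{\Sigma,L}+\sum_{i=1}^n\int_{\gamma_i}k^{L,\nabla^{2,\beta},s}_{\gamma_i,\Sigma}\frac{1}{\sqrt{L}}\,ds_L=2\pi\frac{\chi(\Sigma)}{\sqrt{L}},
\]
the four asymptotic inputs: Proposition 5.9 for the curvature, a second-order refinement $\frac{1}{\sqrt{L}}d\sigma_{\Sigma,L}=d\sigma_\Sigma+d\overline{\sigma_\Sigma}\,L^{-1}+O(L^{-2})$ of the area formula (2.68), the length expansions of Lemma 2.23 (namely (2.66) where $\omega(\dot{\gamma}_i)\neq0$ and (2.67) where $\omega(\dot{\gamma}_i)=0$), and the boundary expansion just recorded. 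Multiplying everything out and collecting like powers of $L$ (absorbing the sign conventions into $d\overline{\sigma_\Sigma}$) yields a single master identity of the shape
\[
\Big(\int_{\Sigma}\frac{A}{4}\,d\sigma_\Sigma\Big)L+\Big(-\int_{\Sigma}\frac{A}{4}\,d\overline{\sigma_\Sigma}+\int_{\Sigma}D_1\,d\sigma_\Sigma+\sum_{i=1}^n\int_{\gamma_i}k^{\infty,\nabla^{2,\beta},s}_{\gamma_i,\Sigma}\,d\overline{s}\Big)+O(L^{-1/2})=2\pi\frac{\chi(\Sigma)}{\sqrt{L}}.
\]

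From this master identity the two assertions drop out by a two-step limit, exactly as for Theorem 3.10. Dividing the master identity by $L$ and letting $L\to+\infty$ (the interchange of limit and integral being justified by the dominated convergence theorem, with the finitely many degenerate boundary parameters forming a null set) kills all but the coefficient of $L$, giving $\int_{\Sigma}\frac{A}{4}\,d\sigma_\Sigma=0$, which is the first displayed equation of the theorem. Substituting this back removes the $O(L)$ term, and then letting $L\to+\infty$ forces the remaining $O(1)$ bracket to equal $\lim_{L\to\infty}2\pi\chi(\Sigma)/\sqrt{L}=0$, which is precisely the second displayed equation.

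I expect the main obstacle to be purely analytic rather than computational: the genuinely hard algebra is already packaged in Proposition 5.9 (the datum $A$ and $D_1$) and Lemma 5.5, so what remains delicate is the honest derivation and bookkeeping of the master identity — in particular tracking the surface-measure correction $d\overline{\sigma_\Sigma}$ at order $L^{-1}$, verifying that the boundary contribution lands at the stated order, and securing a uniform domination so that the two successive passages to the limit are legitimate. The final, and technically most demanding, point is the relaxation from $C(\Sigma)=\emptyset$ to $\mathcal{H}^1(C(\Sigma))=0$ via the local summability of $||\nabla_Hu||_H^{-1}$, which I would import essentially unchanged from \cite{BTV}.
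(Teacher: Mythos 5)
Your proposal is correct and coincides with the paper's method: the paper states this theorem without an explicit proof, but its proof of the analogous Theorem 3.10 in the affine group is word-for-word the argument you give (Gauss--Bonnet for the metric connection divided by $\sqrt{L}$, the expansion from Proposition 5.9, the measure and boundary expansions, the two-step limit via dominated convergence, and the relaxation of $C(\Sigma)=\emptyset$ via \cite{BTV}). You even correctly flag the one point the paper leaves implicit, namely the second-order refinement of the surface-measure expansion needed to produce the $d\overline{\sigma_\Sigma}$ term.
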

\section*{Acknowledgements}
The author was supported in part by  NSFC No.11771070. The author thanks the referee for his (or her) careful reading and helpful comments.

\section*{References}

\end{document}